\newtheorem{thm}{}[section]
\newtheorem{theorem}[thm]{Theorem}
\newtheorem{corollary}[thm]{Corollary}
\newtheorem{lemma}[thm]{Lemma}
\newtheorem{proposition}[thm]{Proposition}
\theoremstyle{definition}
\newtheorem{definition}[thm]{Definition}
\theoremstyle{remark}
\newtheorem{remark}[thm]{Remark}
\newtheorem{question}[thm]{Question}
\numberwithin{equation}{section}
\newcommand{\XB}{\ensuremath{\mathcal{X}}}
\newcommand{\Uf}{\ensuremath{\mathcal{U}}}
\newcommand{\Ft}{\ensuremath{\mathcal{F}}}
\newcommand{\Nt}{\ensuremath{\mathcal{N}}}
\newcommand{\Mt}{\ensuremath{\mathcal{M}}}
\newcommand{\Ts}{\ensuremath{\mathcal{T}}}
\newcommand{\TT}{\ensuremath{\mathbb{T}}}
\newcommand{\tb}{\ensuremath{\bm{t}}}
\newcommand{\bb}{\ensuremath{\bm{b}}}
\newcommand{\vv}{\ensuremath{\bm{v}}}
\newcommand{\xx}{\ensuremath{\bm{x}}}
\newcommand{\yy}{\ensuremath{\bm{y}}}
\newcommand{\ee}{\ensuremath{\bm{e}}}
\newcommand{\uu}{\ensuremath{\bm{u}}}
\newcommand{\ww}{\ensuremath{\bm{w}}}
\newcommand{\zz}{\ensuremath{\bm{z}}}
\newcommand{\XX}{\ensuremath{\mathbb{X}}}
\newcommand{\YY}{\ensuremath{\mathbb{Y}}}
\newcommand{\BB}{\ensuremath{\mathcal{B}}}
\newcommand{\Id}{\ensuremath{\mathrm{Id}}}
\newcommand{\NN}{\ensuremath{\mathbb{N}}}
\newcommand{\ZZ}{\ensuremath{\mathbb{Z}}}
\newcommand{\FF}{\ensuremath{\mathbb{F}}}
\newcommand{\UU}{\ensuremath{\mathbb{U}}}
\newcommand{\VV}{\ensuremath{\mathbb{V}}}
\newcommand{\WW}{\ensuremath{\mathbb{W}}}
\newcommand{\LL}{\ensuremath{\mathbb{L}}}
\newcommand{\EE}{\mathcal{E}}
\newcommand{\Ker}{\operatorname{Ker}}
\newcommand{\supp}{\operatorname{supp}}
\newcommand{\codim}{\operatorname{codim}}
\def\MR#1{}
\begin{document}
%------------------------------------------------------------------------

\title[Uniqueness of unconditional basis of $H_p(\TT)\oplus\ell_{2}$ for $0<p<1$]{Uniqueness of unconditional basis of $\bm{H_p(\TT)\oplus\ell_{2}}$ and $\bm{H_p(\TT)\oplus \Ts^{(2)}}$ for $\bm{0<p<1}$}
\author[F. Albiac]{Fernando Albiac}
\address{Department of Mathematics, Statistics and Computer Sciences, and InaMat$^2$\\ Universidad P\'ublica de Navarra\\
Pamplona 31006\\ Spain}
\email{fernando.albiac@unavarra.es}

\author[J. L. Ansorena]{Jos\'e L. Ansorena}
\address{Department of Mathematics and Computer Sciences\\
Universidad de La Rioja\\
Logro\~no 26004\\ Spain}
\email{joseluis.ansorena@unirioja.es}

\author[P. Wojtaszczyk]{Przemys{\l}aw Wojtaszczyk}
\address{Institute of Mathematics of the Polish Academy of Sciences\\
00-656 Warszawa\\
ul.\ \'Sniadeckich 8\\
Poland}
\email{wojtaszczyk@impan.pl}

\subjclass[2010]{46B15, 46B20, 46B42, 46B45, 46A16, 46A35, 46A40, 46A45}

\keywords{uniqueness of structure, unconditional basis, equivalence of bases, quasi-Banach space, Banach lattice, Hardy spaces, Tsirelson space}

%-----------------------
\begin{abstract}
Our goal in this paper is to advance the state of the art of the topic of uniqueness of unconditional basis. To that end we establish general conditions on a pair $(\XX, \YY)$ formed by a quasi-Banach space $\XX$ and a Banach space $\YY$ which guarantee that every unconditional basis of their direct sum $\XX\oplus\YY$ splits into unconditional bases of each summand. As application of our methods we obtain that, among others, the spaces $H_p(\TT^d) \oplus\Ts^{(2)}$ and $H_p(\TT^d)\oplus\ell_2$, for $p\in(0,1)$ and $d\in\NN$, have a unique unconditional basis (up to equivalence and permutation).
\end{abstract}
%-----------------------

\thanks{F. Albiac acknowledges the support of the Spanish Ministry for Economy and Competitivity under Grant MTM2016-76808-P and the Spanish Ministry for Science and Innovation under Grant PID2019-107701GB-I00 for \emph{Operators, lattices, and structure of Banach spaces}. F. Albiac and J.~L. Ansorena acknowledge the support of the Spanish Ministry for Science, Innovation, and Universities under Grant PGC2018-095366-B-I00 for \emph{An\'alisis Vectorial, Multilineal y Aproximaci\'on}. P. Wojtaszczyk was supported by National Science Centre, Poland grant UMO-2016/21/B/ST1/00241.}

%-----------------------
\maketitle
%-----------------------

\section{Introduction and background}
\noindent
A relevant topic in Banach space theory from a structural point of view is to determine whether a given space has an unconditional basis and, in the case it does, to know whether this is the unique unconditional basis of the space. Recall that a quasi-Banach space (in particular a Banach space) $\XX$ with a semi-normalized unconditional basis $\BB$ is said to have a \emph{unique unconditional basis} if every semi-normalized unconditional basis of $\XX$ is equivalent to $\BB$, in which case $\BB$ is equivalent to all its permutations, i.e., it is symmetric.

For a Banach space it is rather unusual to have a unique unconditional basis. Indeed, it is well-known that $\ell_{2}$ has a unique unconditional basis \cite{KotheToeplitz1934}, and a classical result of Lindenstrauss and Pe{\l}czy{\'n}ski \cite{LinPel1968} establishes that $\ell_{1}$ and $c_{0}$ also have a unique unconditional basis. Lindenstrauss and Zippin \cite{LinZip1969} completed the picture by showing that those three are the only Banach spaces with this property.

For Banach spaces without a symmetric bases (or Banach spaces that we do not know whether they have a symmetric basis or not) it is more natural to consider the question of uniqueness of unconditional basis up to (equivalence and) permutation, UTAP for short. We say that $\XX$ has a UTAP unconditional basis $\BB$ if every semi-normalized unconditional basis of $\XX$ is equivalent to a permutation of $\BB$. Of course, if $\XX$ has a symmetric basis, the notions of uniqueness of unconditional basis and uniqueness of unconditional basis up to equivalence and permutation coincide. The first movers in this direction of research were Edelstein and Wojtaszczyk, who proved that finite direct sums of $c_0$, $\ell_1$ and $\ell_2$ have a UTAP unconditional basis \cite{EdelWoj1976}. Bourgain et al.\ embarked on a comprehensive study aimed at classifying those Banach spaces with unique unconditional basis up to permutation that culminated in 1985 with their \emph{Memoir} \cite{BCLT1985}. They showed that the spaces $c_{0}(\ell_{1})$, $c_{0}(\ell_{2})$, $\ell_{1}(c_{0})$, $\ell_{1}(\ell_{2})$ and their complemented subspaces with an unconditional basis all have a UTAP unconditional basis, while $\ell_{2}(\ell_{1})$ and $\ell_{2}(c_{0})$ do not. However, the hopes of attaining a satisfactory classification were shattered when they found a nonclassical Banach space, namely the $2$-convexification $\Ts^{(2)}$ of Tsirelson's space $\Ts$ having a UTAP unconditional basis. Other significant advances in the theory were carried out by Casazza and Kalton \cites{CasKal1998,CasKal1999}, who proved that Tsirelson's space $\Ts$, certain Nakano spaces close either to $\ell_1$ or $c_0$, certain complemented subspaces of Orlicz sequence spaces $\ell_F$, where $F$ is a convex Orlicz function close either to the function $t\mapsto t^2$ or to the identity map, and certain infinite $\ell_1$-products and $c_0$-products of spaces with a UTAP unconditional basis have a UTAP unconditional basis. The techniques they developed provided also a new approach to the uniqueness UTAP of unconditional basis in the spaces $c_0(\ell_1)$ and $\ell_1(c_0)$.

Each of these examples of Banach spaces with a UTAP unconditional basis follows one of these mutually exclusive patterns:
\begin{enumerate}[label={\textbf{(P.\arabic*)}}, leftmargin=*,widest=4]
\item\label{closec0} The space is close to $c_0$;
\item\label{closel2} the space is close to $\ell_2$;
\item\label{closel1} the space is close to $\ell_1$;
\item\label{mixed} the space is a finite or infinite direct sum of spaces, each of which follows some of the previous patterns. Moreover, in the case of infinite direct sums, the way in which we sum should also follow one of those patterns.
\end{enumerate}

From the comprehensive approach of Functional Analysis, Banach spaces are a particular case of quasi-Banach spaces, hence it seems only natural to transfer the problem of uniqueness of unconditional basis to this more general setting. As we will illustrate below, the situation for quasi-Banach spaces which are not Banach spaces is quite different. Kalton showed that a wide class of non-locally convex Orlicz sequence spaces, including the $\ell_{p}$ spaces for $0<p<1$, have a unique unconditional basis \cite{Kalton1977}. This topic was given continuity later on in a series of papers, amongst which we mention \cites{Ortynski1981,Leranoz1992,Woj1997,AKL2004,AlbiacLeranoz2008,AlbiacLeranoz2011,AlbiacLeranoz2011b,AlbiacAnsorena2020}. In particular, a wide class of non-locally convex Lorentz sequence spaces were proved to have a unique unconditional basis. As for the UTAP unconditional basis problem, important known results include the cases of finite direct sums of $\ell_p$ spaces for $p\in (0,1]\cup\{2,\infty\}$ (we replace $\ell_\infty$ with $c_0$ is $p=\infty$), the mixed-norm spaces $\ell_p(\ell_2)$, $\ell_p(\ell_1)$, $\ell_1(\ell_p)$, $c_0(\ell_p)$, $\ell_p(c_0)$ for $0<p<1$, and the Hardy spaces $H_{p}(\TT^d)$ for $0<p<1$ and $d\in\NN$. These examples exhibit a pattern which generalizes \ref{closel1}, namely:

\begin{enumerate}[label={\textbf{(P.\arabic*)}},leftmargin=*,widest=4,resume]

\item\label{BEclosel1} the Banach envelope of the space is close to $\ell_1$.

\end{enumerate}
Thus, for quasi-Banach spaces that follow \ref{BEclosel1}, the uniqueness of unconditional basis seems to be the norm rather than the exception.

Pulling the thread of pattern \ref{mixed} suggests the following question.
\begin{question}\label{QXY}
Let $\XX$ and $\YY$ be quasi-Banach spaces with a UTAP unconditional basis. Does $\XX\oplus\YY$ have a UTAP unconditional basis?
\end{question}

Since the methods used in the cited results on uniqueness of unconditional basis depend on the space, finding a positive general answer to Question~\ref{QXY} seems unlikely and remote. The authors of \cite{AlbiacAnsorena2020b} addressed this problem and proved that if $\XX$ and $\YY$ are quasi-Banach spaces with a UTAP unconditional basis falling either  into patterns \ref{BEclosel1} or \ref{closec0}, then $\XX\oplus\YY$ has a UTAP unconditional basis.

The expected way to take the subject further is to study what happens when we consider the direct sum of a quasi-Banach space having a UTAP unconditional basis which additionally falls either into patterns \ref{BEclosel1} or \ref{closec0} (or a combination of both), with a Banach space having a UTAP conditional basis which additionally follows pattern \ref{closel2}. In particular, the following question arises:
\begin{question}\label{question:gluel2}
Does $\XX\oplus\ell_2$ or $\XX\oplus\Ts^{(2)}$ have a UTAP unconditional basis provided that the quasi-Banach space $\XX$ does?
\end{question}

In this paper we address Question~\ref{question:gluel2} in the case when $\XX$ is non-locally convex, and provide a positive answer for a wide class of spaces that includes the Hardy spaces $H_p(\TT^d)$ for $0<p<1$ and $d\in\NN$. We will derive our results from a general splitting principle for unconditional bases that we will present in Section~\ref{sect:main} followed by some applications. Before, in Sections~\ref{sect:split} and \ref{sect:SP}, we will develop the necessary machinery that will sustain our discussion. Specifically, in Section~\ref{sect:split} we set up a technique for splitting unconditional bases of a direct sum of a non-locally convex quasi-Banach space with a Banach space. In Section~\ref{sect:SP} we exploit a method from \cite{Ortynski1979} for splitting complemented subspaces, which combined with the notion of subprojectivity yields a sufficient condition for an operator between quasi-Banach spaces to be strictly singular.

\subsection{Terminology}
We use standard terminology and notation in Banach space theory as can be found, e.g., in \cites{AlbiacKalton2016}. Most of our results, however, will be established in the general setting of quasi-Banach spaces; the unfamiliar reader will find general information about quasi-Banach spaces in \cite{KPR1984}. We next gather the notation that is more heavily used.

A \emph{quasi-norm} on a vector space over the real or complex field $\FF$, is a map $\Vert \cdot\Vert\colon \XX\to [0,\infty)$ satisfying
\begin{enumerate}[label={(Q.\alph*)}]
\item\label{it:H} $\Vert f\Vert = 0$ if and only if $f=0$;
\item\label{it:Hom} $\Vert \alpha f\Vert =|\alpha| \Vert f\Vert$ for $\alpha\in \FF$ and $f\in \XX$; and
\item\label{it:modconcavity} there is a constant $\kappa\ge 1$ so that for all $f$ and $g$ in $\XX$ we have
$
\Vert f +g\Vert \le \kappa (\Vert f\Vert +\Vert g\Vert).
$
\end{enumerate}
If it is possible to take $\kappa=1$ we obtain a norm. More generally, given $0<p\le 1$, a $p$-norm is a map $\Vert \cdot\Vert\colon \XX\to [0,\infty)$ satisfying \ref{it:H}, \ref{it:Hom} and
\begin{enumerate}[resume*]
\item\label{it:pconvex}for all $f$ and $g$ in $\XX$ we have
$
\Vert f+g\Vert^p \le \Vert f\Vert^p +\Vert g\Vert^p.
$
\end{enumerate}
Any $p$-norm is a quasi-norm. Conversely, by Aoki-Rolewicz theorem \cites{Aoki1942, Rolewicz1957} any quasi-norm is $p$-convex for some $p\in(0,1]$, i.e., there is a constant $C\ge 1$ such that
\[
\left\Vert \sum_{j=1}^n f_j \right \Vert^ p \le C \sum_{j=1}^n \Vert f_j\Vert^p, \quad n\in\NN, \, f_j\in\XX.
\]
Thus, any quasi-Banach space can be endowed with an equivalent $p$-norm.

A quasi-norm defines a metrizable vector topology on $\XX$ whose base of neighborhoods of zero is given by sets of the form $\{x\in \XX \colon \Vert x\Vert<1/n\}$, $n\in \NN$. If such topology is complete we say that $(\XX, \Vert\cdot\Vert)$ is a \emph{quasi-Banach space}. A \emph{$p$-Banach space}, $0<p\le 1$, is a quasi-Banach space endowed with a $p$-norm.

The symbol $\XX\simeq\YY$ means that the quasi-Banach spaces $\XX$ and $\YY$ are isomorphic. The \emph{Banach envelope} of a quasi-Banach space $\XX$ consists of a Banach space $\widehat{\XX}$ together with a linear contraction $J_\XX\colon\XX \to \widehat{\XX}$ satisfying the following universal property: for every Banach space $\YY$ and every linear contraction $T\colon\XX \to\YY$ there is a unique linear contraction $\widehat{T}\colon \widehat{\XX}\to \YY$ such that $\widehat{T}\circ J_\XX=T$. Given a basis $\BB$ in $\XX$ we put $\widehat{\BB}:=J_\XX(\BB)$ for the so-called \emph{basis envelope} of $\BB$. We say that a Banach space $\YY$ is isomorphic to the Banach envelope of $\XX$ via the map $J\colon\XX\to\YY$ if the associated map $\widehat{J}\colon\widehat{\XX}\to\YY$ is an isomorphism. For background on envelopes of spaces and bases we refer to \cite{AABW2019}*{\S9}.

A family $(\xx_n)_{n \in \Nt}$ in a quasi-Banach space $\XX$ is said to be \emph{semi-normalized} if
\[
\textstyle
0<\inf_{n\in\Nt} \Vert \xx_n\Vert\le \sup_{n\in\Nt} \Vert \xx_n\Vert<\infty.
\]
The closed linear span of a subset $V$ of $\XX$ will be denoted by $[V]$. A sequence $\XB=(\xx_n)_{n=1}^\infty$ in $\XX$ is said to be a \emph{Schauder basic sequence} if for every $f\in[\xx_n \colon n \in \NN]$ there is a unique family $(a_n)_{n=1}^\infty$ in $\FF$ such that the series $\sum_{n=1}^\infty a_n \, \xx_n$ converges to $f$. If, moreover, $[\xx_n \colon n \in \NN]=\XX$, we say that $\XB$ in $\XX$ is a \emph{Schauder basis} of $\XX$. If $\XB$ is a basis of $\XX$ the functionals $(\xx_n^*)_{n=1}^\infty$ in $\XX^*$ defined by $\xx_n^*(f)=a_n$ whenever $f=\sum_{n=1}^\infty a_n \, \xx_n$ are called the biorthogonal functionals of $\XB$. The support of $f\in\XX$ with respect to the basis $\XB$ is the set
\[
\supp(f)=\{n\in\NN \colon \xx_n^*(f)\not=0\}.
\]
A \emph{block basic sequence} with respect to $\XB$ is a sequence $(\yy_k)_{k=1}^\infty$ in $\XX\setminus\{0\}$ such that
\[
\max(\supp(\yy_k))<\min(\supp(\yy_{k+1})), \quad k\in\NN.
\]

It will be convenient to index unconditional bases with (finite or infinite) countable sets other than $\NN$. A countable family $\BB=(\xx_n)_{n \in \Nt}$ in $\XX$ is an \emph{unconditional basic sequence} if for every $f\in[\xx_n \colon n \in \Nt]$ there is a unique family $(a_n)_{n \in \Nt}$ in $\FF$ such that the series $\sum_{n \in \Nt} a_n \, \xx_n$ converges unconditionally to $f$. If we additionally have $[\xx_n \colon n \in \Nt]=\XX$ then $\BB$ is an \emph{unconditional basis} of $\XX$.

A \emph{sequence space} on a countable set $\Nt$ will be a quasi-Banach lattice on $\Nt$ for which the unit vector system $(\ee_j)_{j\in\Nt}$ defined by $\ee_j=(\delta_{i,j})_{i\in\Nt}$, where $\delta_{i,j}=1$ if $i=j$ and $\delta_{i,j}=0$ otherwise, is an unconditional basis. We will denote by $\EE[\LL]$ the unit vector system of a sequence space $\LL$.

If $\BB$ is an unconditional basis, under a suitable renorming of the space we have
\[
\left\Vert \sum_{n \in \Nt} a_n \, \xx_n\right\Vert \le \left\Vert \sum_{n \in \Nt} b_n \, \xx_n\right\Vert
\]
provided that the vectors $\sum_{n \in \Nt} a_n \, \xx_n$ and $\sum_{n \in \Nt} b_n \, \xx_n$ of $\XX$ satisfy $|a_n|\le |b_n|$ for all $n\in\Nt$. Hence an unconditional basis induces a lattice structure on $\XX$ via the identification of the vectors with the coefficients of their expansions relative to the basis, so that $\XX$ is lattice isomorphic to a sequence space. Because of that, we will say that an unconditional basis enjoys a certain property about lattices when its associated quasi-Banach lattice does. A quasi-Banach lattice $\LL$ is said to be \emph{$q$-convex} (resp., \emph{$q$-concave}), where $0<q\le \infty$, if there is a constant $C>0$ such that for any $m\in\NN$ and $(f_j)_{j=1}^m$ in $\LL$ we have $\Vert f\Vert \le C N$ (resp.\ $N\le C\Vert f\Vert$), where
\begin{equation*}
f=\left(\sum_{j=1}^{m}\vert f_{j}\vert^{q}\right)^{1/q} \text{ and } N=\left(\sum_{j=1}^{m}\Vert f_{j}\Vert^{q}\right)^{1/q}.
\end{equation*}
The general procedure to define the element $f\in\LL$ is described in \cite{LinTza1977II}*{pp.\ 40-41}. However, when the lattice structure on $\LL$ is induced by an unconditional basis $(\xx_n)_{n\in\Nt}$, if
$f_{j}=\sum_{n\in\Nt} a_{j,n} \xx_n$ for $1\le j \le m$, the element $f$ takes the more workable form
\[
\left(\sum_{j=1}^{m}\vert f_{j}\vert^{q}\right)^{1/q} = \sum_{n\in\Nt} \left( \sum_{j=1}^m |a_{j,n}|^q\right)^{1/q} \xx_n.
\]
Related to lattice convexity and lattice concavity are the notions of upper and lower lattice estimates. We say that $\LL$ satisfies an \emph{upper} (resp.\ \emph{lower}) \emph{$q$-estimate} if the above convexity (resp.\ concavity) inequalities hold in the case when $(f_j)_{j=1}^m$ is pairwise disjointly supported. Note that, in this case, $|f|=|\sum_{j=1}^m f_j|$ and so $\Vert f\Vert=\Vert \sum_{j=1}^m f_j\Vert$.

If a quasi-Banach lattice is locally convex as a quasi-Banach space, then it is $1$-convex as a quasi-Banach lattice. However, despite the fact that every quasi-Banach space is $q$-convex for some $0<q\le 1$, there exist quasi-Banach lattices that are not $q$-convex for any $q$. Kalton defined in \cite{Kalton1984} a quasi-Banach lattice $\LL$ as being \emph{$L$-convex} if there is $\varepsilon>0$ so that whenever $f$ and $(f_i)_{i=1}^k$ in $\LL$ satisfy $0\le f_i\le f$ for every $i=1$, \dots, $k$, and $(1-\varepsilon)kf\ge \sum_{i=1}^k f_i$ then $\varepsilon \Vert f \Vert \le \max_{1\le i \le k} \Vert f_i\Vert$. He showed that a quasi-Banach lattice is $L$-convex if and only if it is $q$-convex for some $q>0$. Most quasi-Banach lattices occurring naturally in analysis are $L$-convex. Thus, a quasi-Banach space is said to be \emph{natural} if it is a subspace of an $L$-convex quasi-Banach lattice. Note that, in particular, any Banach space is a natural quasi-Banach space.

The property of $L$-convexity (or local convexity) allows to obtain a tight connection between upper (resp.\ lower) estimates and convexity (resp.\ concavity): if an L-convex lattice $\LL$ satisfies an upper (resp.\ lower) $q$-estimate, then it is a $r$-convex lattice for $0<r<q$ (resp.\ $r$-concave lattice for $q<r<\infty$) (see \cite{Kalton1984}*{Theorem 1.2} and \cite{LinTza1977II}*{Theorem 1.f.7}). Thus, the set of indices $r$ for which $\LL$ is $r$-convex (resp.\ concave) is an interval with lower endpoint $0$ (resp.\ upper endpoint $\infty$).

Suppose that $\BB_x=(\xx_n)_{n \in \Nt}$ and $\BB_y=(\yy_n)_{n \in \Nt}$ are (countable) families of vectors (indexed by the same set $\Nt$) in quasi-Banach spaces $\XX$ and $\YY$, respectively. We say that $\BB_x$ \emph{dominates} $\BB_y$ if there is a bounded linear map $T\colon[\BB_x]\to \YY$ with $T(\xx_n)=\yy_n$ for all $n \in \Nt$. If $\BB_x$ both dominates and it is dominated by $\BB_y$ we say that $\BB_x$ and $\BB_y$ are \emph{equivalent}. Note that an infinite unconditional basis $\BB$ satisfies an upper (resp.\ lower) $p$-estimate if and only if any semi-normalized sequence disjointly supported sequence with respect to $\BB$ is dominated (resp.\ dominates) the unit vector system $\EE[\ell_p]$ of $\ell_p$.

We say that $\BB_x$ is \emph{permutatively equivalent} to a family $\BB_y=(\yy_m)_{m\in \Mt}$ in $\YY$ if there is a bijection $\pi\colon \Nt\to \Mt$ such that $\BB_x$ and $(\yy_{\pi(n)})_{n \in \Nt}$ are equivalent.

Given families $\Lambda_i=(\lambda_{i,j})_{i\in J_i}$ for $i\in I$, we denote by
$\sqcup_{i\in I} \Lambda_i$ its disjoint union, i.e,
\[
\sqcup_{i\in I} \Lambda_i
=(\lambda_{i,j})_{(i,j)\in \cup_{i\in I} \{i\} \times J_i }.
\]

Let $(\XX_i)_{i\in I}$ be a finite collection of (possibly repeated) quasi-Banach spaces. The Cartesian product $\bigoplus_{i\in I}\XX_i$ equipped with the quasi-norm
\[
\textstyle
\left\Vert (\xx_i)_{i\in I}\right\Vert=\sup_{i\in I} \Vert \xx_i\Vert,\quad \xx_i\in\XX_i
\]
is a quasi-Banach space. For $i\in F$ let $L_i\colon \XX_i\to \XX$ be the canonical ``inclusion'' map. Suppose that $\BB_i=(\xx_{i,n})_{j\in \Nt_i}$ is an unconditional basis of $\XX_i$ for each $i\in F$. Then the sequence
\[
\textstyle
\bigoplus_{i\in I} \BB_i :
\sqcup_{i\in F} L_i(\BB_i)
=(L_i(\xx_{i,j}))_{(i,j)\in \cup_{i\in F} \{i\} \times J_i }
\]
is an unconditional basis of $\bigoplus_{i\in I} \XX_i$.

A subspace $\YY$ of a Banach space $\XX$ is said to be \emph{complemented} in $\XX$ if there is a projection $P\colon \XX\to\XX$ with $P(\XX)=\YY$, in which case we say that $\Ker(P)$ is a \emph{complement} of $\YY$ in $\XX$. If $\YY^c$ is a complement of $\YY$ in $\XX$, then $\XX\simeq \YY\oplus \YY^c$ and $\YY^c\simeq\YY/\XX$. This yields a well-known and useful lemma.

\begin{lemma}\label{lem:UC}
Let $\YY$ be a complemented subspace of a quasi-Banach space $\XX$. Suppose that $\UU_1$ and $\UU_2$ are complements of $\YY$ in $\XX$. Then $\UU_1\simeq\UU_2$.
\end{lemma}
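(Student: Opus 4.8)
The plan is to exploit the fact recorded just above the statement, namely that every complement of $\YY$ is isomorphic to the quotient $\XX/\YY$. Once this is known the conclusion is immediate, since isomorphism is transitive: $\UU_1\simeq\XX/\YY\simeq\UU_2$. Thus the entire content of the lemma reduces to proving that an arbitrary complement $\UU$ of $\YY$ in $\XX$ satisfies $\UU\simeq\XX/\YY$.

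To carry this out, I would first fix a projection $P\colon\XX\to\XX$ with $P(\XX)=\YY$ and $\Ker(P)=\UU$, whose existence is guaranteed by the definition of complement. Let $q\colon\XX\to\XX/\YY$ be the canonical quotient map and consider its restriction $q|_\UU\colon\UU\to\XX/\YY$, which is linear and bounded, being the restriction of a bounded map. I would then check that $q|_\UU$ is a bijection. It is injective because $\UU\cap\YY=\Ker(P)\cap P(\XX)=\{0\}$ (if $u=Pv$ and $Pu=0$ then $u=Pu=0$, so the range and the kernel of a projection meet only at the origin). It is surjective because every $f\in\XX$ splits as $f=P(f)+(f-P(f))$ with $P(f)\in\YY$ and $f-P(f)\in\Ker(P)=\UU$, whence $q(f)=q\bigl(f-P(f)\bigr)=q|_\UU\bigl(f-P(f)\bigr)$.

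The remaining step is to upgrade this bounded linear bijection to an isomorphism, i.e.\ to see that its inverse is bounded as well. Here I would invoke the open mapping theorem. This is the one place where we must be slightly careful, since we are working with quasi-Banach rather than Banach spaces; the theorem nonetheless applies, because a quasi-Banach space is a complete metrizable topological vector space (an $F$-space), $\UU=\Ker(P)$ is closed in the complete space $\XX$ and hence complete, and $\XX/\YY$ is the quotient of a complete space by the closed subspace $\YY=\Ker(\Id-P)$ and hence complete. Therefore $q|_\UU$ is an isomorphism and $\UU\simeq\XX/\YY$, as desired.

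I do not anticipate any genuine obstacle here, as the argument is soft; the only point warranting attention is the validity of the open mapping theorem in the quasi-Banach setting, addressed above. If one prefers to avoid it altogether, an explicit alternative yields the isomorphism directly. Writing $P_1,P_2$ for projections onto $\YY$ with kernels $\UU_1,\UU_2$, the maps $(\Id-P_2)|_{\UU_1}\colon\UU_1\to\UU_2$ and $(\Id-P_1)|_{\UU_2}\colon\UU_2\to\UU_1$ are well defined (since $P_2(\Id-P_2)=0$ and $P_1(\Id-P_1)=0$) and bounded, and, using $P_i^2=P_i$ together with $P_i|_{\YY}=\Id$, a one-line computation shows they are mutually inverse. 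Either route closes the proof.
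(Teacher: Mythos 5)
Your proposal is correct and takes essentially the same route as the paper, which offers no separate proof but deduces the lemma immediately from the remark preceding it, namely that any complement $\YY^c$ of $\YY$ in $\XX$ is isomorphic to the quotient $\XX/\YY$. Your write-up simply supplies the details behind that remark (with a correct appeal to the open mapping theorem in the $F$-space setting, plus a valid projection-based alternative), so nothing is missing.
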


An unconditional basic sequence $\BB_y=(\yy_{m})_{m\in \Mt}$ in a quasi-Banach space $\XX$ is said to be \emph{complemented} if its closed linear span $\YY= [\BB_y]$ is a complemented subspace of $\XX$.

A \emph{subbasis} of an unconditional basic sequence $\BB=(\xx_n)_{n \in \Nt}$ is a family $(\xx_n)_{n\in \Mt}$ for some subset $\Mt$ of $\Nt$. Any subbasis of $\BB$ is an unconditional basis sequence which is complemented in $[\bb_n\colon n\in\Nt]$. If $(\Nt_i)_{i\in F}$ is a finite partition of $\Nt$, and we set $\BB_i=(\xx_n)_{n \in \Nt_i}$ for all $i\in F$, we say that $\BB$ \emph{splits into} $(\BB_i)_{i\in F}$. In this case $\BB$ is permutatively equivalent to $\bigoplus_{i\in F} \BB_i$ and so $[\BB]\simeq\bigoplus_{i\in F} [\BB_i]$.

Other more specific notation will be introduced in context when needed.

\section{Splitting unconditional bases of a direct sum\\ of a quasi-Banach space and a Banach space}\label{sect:split}

\noindent This section is geared towards proving Lemma~\ref{lem:split} below. First we introduce the necessary notions as well as some auxiliary results we will use in its proof.

\begin{definition}
We say that a finite family $(\XX_i)_{i\in I}$ of quasi-Banach spaces is \emph{splitting for unconditional bases} if every unconditional basis of $\bigoplus_{i\in I} \XX_i$ splits into basic sequences $(\BB_i)_{i\in I}$ with $[\BB_i]\simeq \XX_i$ for each $i\in I$.
\end{definition}

Being able to split complemented subspaces will also be useful to us.
\begin{definition}
We say that a finite family $(\XX_i)_{i\in I}$ of quasi-Banach spaces is \emph{splitting for complemented subspaces} if for every complemented subspace $\YY$ of $\XX:=\bigoplus_{i\in I} \XX_i$ there are complemented subspaces $\YY_i$ of $\XX_i$ for each $i\in I$ and an automorphism $T$ of $\XX$ such that $T(\YY)=\bigoplus_{i\in I} \YY_i$.
\end{definition}

Since $\ell_1$ is the prototype of the Banach envelope of a quasi-Banach space (for instance $\ell_{1}$ is isometrically isomorphic to the Banach envelope of $\ell_{p}$ for all $0<p<1$ and is also isomorphic to the Banach envelope of the Lorentz sequence space $\ell_{p,q}$ for $0<p<1$ and $0<q\le\infty$) one could conjecture that those Banach spaces ``far from'' $\ell_1$ cannot be Banach envelopes of any non-locally convex quasi-Banach space. Kalton addressed the task of substantiating this guess in \cite{Kalton1986} and showed, for example, that $\ell_2$ is not isomorphic to the Banach envelope of any non-locally convex quasi-Banach space. He also found a non-locally convex quasi-Banach space whose Banach envelope is isomorphic to $c_0$. However, since Kalton also proved that $c_0$ is not the Banach envelope of any natural space, this example can be regarded as somewhat pathological. The following definition brings relief to this feature of a Banach space, which will be exploited thereafter.

\begin{definition}\label{properenveldef}
A Banach space $\XX$ will be said to be a \emph{proper envelope} (respectively, a \emph{proper envelope of  a natural space}) if there is a non-locally convex quasi-Banach space (resp.,\ nonlocally convex natural quasi-Banach space) $\YY$ whose Banach envelope $\widehat{\YY}$ is isomorphic to $\XX$.
\end{definition}

In other words, a Banach space $\XX$ is \emph{not} a proper envelope (resp.\ $\XX$ is not a proper envelope of  natural spaces) if and only if $\XX$ is isomorphic to the Banach envelope of a quasi-Banach space (resp., natural quasi-Banach space) $\YY$ via a map $J\colon \YY\to \XX$ only in the trivial case that $J$ is an isomorphism so that $\YY$ is locally convex.

We will also use a couple of variations of the concept of totally incomparable spaces introduced by Rosenthal in \cite{Rosenthal1969}, whose definitions for quasi-Banach spaces we gather next.

\begin{definition}Two quasi-Banach spaces $\XX$ and $\YY$ will be said to be \emph{totally incomparable} if there is no infinite-dimensional quasi-Banach space isomorphic to both a subspace of $\XX$ and a subspace of $\YY$. If there is no infinite dimensional quasi-Banach space isomorphic to both a complemented subspace of $\XX$ and a subspace of $\YY$, we say that the pair $(\XX,\YY)$ is \emph{semi-complementably incomparable}. If there is no infinite-dimensional quasi-Banach space isomorphic to both a complemented subspace of $\XX$ and a complemented subspace of $\YY$, we say that $\XX$ and $\YY$ are \emph{complementably incomparable}.
\end{definition}

\begin{lemma}\label{lem:CIfromEnv}
Let $\XX$ be a quasi-Banach space and $\UU$ be a Banach space. Suppose that the pair $(\widehat{\XX},\UU)$ is semi-complementably incomparable (resp.\ $\widehat{\XX}$ and $\UU$ are complementably incomparable). Then the pair $(\XX,\UU)$ is semi-complementably incomparable (resp.\ $\XX$ and $\UU$ are complementably incomparable).
\end{lemma}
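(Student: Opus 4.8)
The plan is to reduce both assertions to a single transfer principle: \emph{if $\VV$ is a complemented subspace of $\XX$ that is moreover locally convex (equivalently, isomorphic to a Banach space), then $J_\XX(\VV)$ is isomorphic to $\VV$ and is complemented in $\widehat{\XX}$.} Granting this, both statements follow by contraposition; the point is that any space witnessing a failure of incomparability of $(\XX,\UU)$ is forced to be locally convex, because it embeds into the Banach space $\UU$, and can therefore be pushed into $\widehat{\XX}$ to contradict the hypothesis.

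To establish the transfer principle I would take a projection $P\colon\XX\to\XX$ with range $\VV$ and factor it as $P=\iota\circ P'$, where $\iota\colon\VV\hookrightarrow\XX$ is the inclusion and $P'\colon\XX\to\VV$ is the corestriction, so that $P'\circ\iota=\Id_\VV$. Since $\VV$ is a Banach space, a suitable scaling of $P'$ is a contraction into a Banach space, so the universal property of the Banach envelope yields a bounded operator $\widehat{P'}\colon\widehat{\XX}\to\VV$ with $\widehat{P'}\circ J_\XX=P'$. Composing on the right with $\iota$ gives $\widehat{P'}\circ(J_\XX\circ\iota)=P'\circ\iota=\Id_\VV$. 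This one identity does all the work: it shows at once that $J_\XX\circ\iota\colon\VV\to\widehat{\XX}$ is bounded below, hence (as $J_\XX$ is a contraction) an isomorphism onto its image, and that $R:=(J_\XX\circ\iota)\circ\widehat{P'}$ is an idempotent on $\widehat{\XX}$ fixing $J_\XX(\VV)$ pointwise, so that its range is exactly $J_\XX(\VV)$. Thus $\VV\simeq J_\XX(\VV)$ and $J_\XX(\VV)$ is complemented in $\widehat{\XX}$, as claimed.

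Finally I would assemble the two cases. For the semi-complementable statement, suppose $(\XX,\UU)$ were not semi-complementably incomparable; then some infinite-dimensional $\WW$ is isomorphic both to a complemented subspace $\VV$ of $\XX$ and to a subspace of $\UU$. Being isomorphic to a subspace of a Banach space, $\WW$, and hence $\VV$, is locally convex, so the transfer principle applies and $\WW\simeq J_\XX(\VV)$ is a complemented subspace of $\widehat{\XX}$; combined with the embedding of $\WW$ into $\UU$ this contradicts the semi-complementable incomparability of $(\widehat{\XX},\UU)$. The complementable case is identical, replacing ``subspace of $\UU$'' by ``complemented subspace of $\UU$'' throughout. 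The only genuine subtlety, and the step deserving care, is the invocation of the universal property: it is precisely the local convexity of the common space, inherited from $\UU$, that makes $\VV$ a Banach space and thereby licenses the extension of $P'$ to $\widehat{P'}$.
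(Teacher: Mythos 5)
Your proof is correct, and its skeleton is the same as the paper's: by contraposition, any infinite-dimensional space witnessing failure of (semi-)complementable incomparability of $(\XX,\UU)$ embeds in the Banach space $\UU$, hence is locally convex, and can therefore be transferred to a complemented subspace of $\widehat{\XX}$, contradicting the hypothesis on $(\widehat{\XX},\UU)$. The difference lies in how the transfer step is justified. The paper invokes \cite{AABW2019}*{Corollary 9.7}, a general statement valid for \emph{every} complemented subspace $\YY$ of $\XX$: the image $J_\XX(\YY)$ is complemented in $\widehat{\XX}$ and is the Banach envelope of $\YY$ via $J_\XX|_\YY$; local convexity of $\YY$ is then used only to upgrade $J_\XX|_\YY$ to an isomorphic embedding. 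You instead prove from scratch exactly the special case needed, namely $\VV$ complemented and locally convex, via the universal property: extend the corestricted projection $P'\colon\XX\to\VV$ to $\widehat{P'}\colon\widehat{\XX}\to\VV$, and observe that the single identity $\widehat{P'}\circ(J_\XX\circ\iota)=\Id_\VV$ makes $J_\XX\circ\iota$ an isomorphic embedding and $(J_\XX\circ\iota)\circ\widehat{P'}$ a bounded idempotent with range $J_\XX(\VV)$. What this buys is self-containedness: nothing about envelopes is used beyond the universal property itself. The price is generality: your $\widehat{P'}$ exists only because $\VV$ is (after renorming) a Banach space, so the argument says nothing about non-locally-convex complemented subspaces --- but that is precisely the only case the lemma requires, since local convexity of the witnessing space is forced by its embedding into $\UU$.
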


\begin{proof}
Let $J_\XX\colon\XX\to\widehat{\XX}$ be the envelope map. Suppose that $\YY$ is a complemented subspace of $\XX$ isomorphic to a (complemented) subspace of $\UU$. By \cite{AABW2019}*{Corollary 9.7}, $J_\XX(\YY)$ is complemented in $\widehat{\XX}$, and it is isomorphic to the Banach envelope of $\YY$ via the map $J_\XX|_\YY$. Since $\YY$ is locally convex, $J_\XX|_\YY$ is an isomorphic embedding. Hence, $\YY$ is isomorphic to a complemented subspace of $\widehat{\XX}$ and so $\dim(\YY)<\infty$.
\end{proof}

Given vector spaces $\UU$ and $\VV$ with $\UU\subseteq\VV$ we define de \emph{codimension} of $\UU$ in $\VV$ by
\[
\codim_{\VV}(\UU)=\dim(\VV/\UU).
\]
If $\WW$ is an algebraic complement of $\UU$ in $\VV$, then the canonical linear map from $\WW$ into $\VV/\UU$ is a linear bijection, thus $\codim_{\VV}(\UU)=\dim(\WW)$. It follows from the Hahn-Banach theorem that every Banach space has a great deal of hyperplanes, i.e., closed subspaces of codimension one, and it is well-known that all hyperplanes of a given Banach space are isomorphic. In contrast, there are quasi-Banach spaces with no non-zero functionals and so they contain no hyperplanes; take for instance the space $L_p([0,1])$ for $0<p<1$. Aside from this `pathology', hyperplanes of quasi-Banach spaces behave like those of Banach spaces. That is,  if a quasi-Banach space has hyperplanes then all of them are isomorphic. To evince that this property does not depend on the Hahn-Banach theorem, we write down its proof.

\begin{proposition}\label{lem:finitedcodim}
Let $\VV$ be a closed finite-codimensional subspace of a quasi-Banach space $\XX$. Then,
\begin{enumerate}[label=(\roman*), leftmargin=*,widest=ii]
\item\label{lem:CC} $\VV$ is complemented in $\XX$. In fact, any algebraic complement $\WW$ of $\VV$ is a topological complement. Moreover,
\item\label{lem:CI} if $\UU$ is a closed subspace of $\XX$ with $\codim_{\XX}(\UU)=\codim_{\XX}(\VV)$, then $\UU\simeq\VV$.
\end{enumerate}
\end{proposition}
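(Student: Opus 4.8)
The plan is to reduce everything to the uniqueness of the vector topology on a finite-dimensional space, thereby sidestepping any appeal to the Hahn--Banach theorem. Throughout I set $n=\codim_{\XX}(\VV)<\infty$ and let $q\colon\XX\to\XX/\VV$ be the quotient map.

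For part \ref{lem:CC}: since $\VV$ is closed, $\XX/\VV$ is a quasi-Banach space of dimension $n$, and because all quasi-norms on an $n$-dimensional vector space are equivalent, its topology is the Euclidean one. Let $\WW$ be any algebraic complement of $\VV$. As $\WW\cap\VV=\{0\}$ and $\WW+\VV=\XX$, the restriction $q|_{\WW}\colon\WW\to\XX/\VV$ is a linear bijection; being the restriction of the continuous map $q$ it is continuous, and a continuous linear bijection between finite-dimensional Hausdorff topological vector spaces is automatically a topological isomorphism. Hence $(q|_{\WW})^{-1}$ is continuous, so $P:=(q|_{\WW})^{-1}\circ q\colon\XX\to\WW$ is a continuous projection with range $\WW$ and kernel $\VV$. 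This exhibits $\WW$ as a topological complement of $\VV$ (in particular $\WW=\Ker(\Id-P)$ is closed), which is \ref{lem:CC}.

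For part \ref{lem:CI}, I would avoid any cancellation argument and instead pass to the common subspace $\ZZ:=\VV\cap\UU$, which is closed. First I record that $\codim_{\XX}(\ZZ)\le 2n<\infty$, since $x\mapsto(x+\VV,\,x+\UU)$ embeds $\XX/\ZZ$ into $\XX/\VV\oplus\XX/\UU$. Additivity of codimension along the towers $\ZZ\subseteq\VV\subseteq\XX$ and $\ZZ\subseteq\UU\subseteq\XX$ gives $\codim_{\XX}(\ZZ)=n+\codim_{\VV}(\ZZ)=n+\codim_{\UU}(\ZZ)$, whence $m:=\codim_{\VV}(\ZZ)=\codim_{\UU}(\ZZ)$ is finite. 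Now $\ZZ$ is a closed subspace of codimension $m$ in the quasi-Banach space $\VV$, so part \ref{lem:CC} yields $\VV\simeq\ZZ\oplus\WW_{\VV}$ with $\dim\WW_{\VV}=m$; since every $m$-dimensional quasi-Banach space is isomorphic to $\FF^m$, this reads $\VV\simeq\ZZ\oplus\FF^m$. The same argument inside $\UU$ gives $\UU\simeq\ZZ\oplus\FF^m$, and therefore $\UU\simeq\VV$.

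The only delicate ingredient is the finite-dimensional input used repeatedly: that a closed finite-codimensional subspace produces a finite-dimensional Hausdorff quotient whose topology is Euclidean, and that continuous linear bijections of such spaces are isomorphisms. Once this is in hand, part \ref{lem:CC} amounts to the construction of $P$, and part \ref{lem:CI} is pure codimension bookkeeping together with a single application of \ref{lem:CC} inside each of $\VV$ and $\UU$; routing the isomorphism through $\ZZ$ is precisely what lets us conclude $\UU\simeq\VV$ without invoking a cancellation law.
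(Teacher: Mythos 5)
Your proof is correct and takes essentially the same route as the paper's: part \ref{lem:CC} via the canonical topological isomorphism $\WW\to\XX/\VV$ and the projection $(q|_{\WW})^{-1}\circ q$, and part \ref{lem:CI} by routing through $\UU\cap\VV$, equating its codimensions in $\UU$ and $\VV$ by additivity of codimension, and applying \ref{lem:CC} twice to get $\VV\simeq(\UU\cap\VV)\oplus\FF^m\simeq\UU$. The only cosmetic difference is your finiteness bound $\codim_{\XX}(\UU\cap\VV)\le 2n$ via the embedding of $\XX/(\UU\cap\VV)$ into $\XX/\VV\oplus\XX/\UU$, where the paper instead uses the inequality $\codim_{\VV}(\UU\cap\VV)\le\codim_{\XX}(\UU)$ directly.
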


\begin{proof}
The canonical linear bijection from $\WW$ to $\XX/\VV$ is a topological isomorphism. Hence, if $T\colon\XX/\VV\to\WW$ is its inverse, and $Q\colon \XX\to \XX/\VV$ is the canonical quotient map, $T\circ Q$ is a projection onto $\WW$ whose kernel is $\VV$. Combining the formulas
\begin{align*}
\codim_{\VV}(\UU\cap\VV) & \le \codim_{\XX}(\UU),\\
\codim_{\XX}(\UU\cap\VV)&=\codim_{\VV}(\UU\cap\VV)+\codim_{\XX}(\VV)
\end{align*}
with the ones we get from switching the roles of $\UU$ and $\VV$ yields
\[
n:=\codim_{\UU}(\UU\cap\VV)=\codim_{\VV}(\UU\cap\VV)<\infty.
\]
By \ref{lem:CC}, $\VV\simeq (\UU\cap\VV) \oplus \FF^n\simeq\UU$.
\end{proof}

\begin{lemma}\label{lem:cycle}
Let $\XX$ be a quasi-Banach space. There is $d=d(\XX)\in\NN\cup\{0\}$ such that:
\begin{enumerate}[label=(\roman*), leftmargin=*,widest=ii]
\item\label{lem:cycleup} If $\UU$ and $\VV$ are finite-dimensional quasi-Banach spaces, then $\XX\oplus\UU\simeq \XX\oplus\VV$ if and only if $\dim(\UU)-\dim(\VV)=jd$ for some $j\in\ZZ$.
\item\label{lem:cycledown} If $\UU$ and $\VV$ are two finite-codimensional closed subspaces of $\XX$, then $\UU\simeq \VV$ if and only if $\codim_{\XX}(\UU)-\codim_{\XX}(\VV)=jd$ for some $j\in\ZZ$.
\end{enumerate}
\end{lemma}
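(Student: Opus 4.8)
The plan is to first reduce statement~\ref{lem:cycleup} to the model spaces $\FF^n$ and extract $d$ from a subgroup of $\ZZ$. Since every finite-dimensional quasi-Banach space of dimension $n$ is isomorphic to $\FF^n$ (all quasi-norms on a finite-dimensional space are equivalent), it suffices to understand the set
\[
S=\{\, m-n : m,n\in\NN\cup\{0\},\ \XX\oplus\FF^m\simeq\XX\oplus\FF^n\,\}\subseteq\ZZ.
\]
I would show that $S$ is a subgroup of $\ZZ$. It clearly contains $0$ and is symmetric. For closure under addition the key observation is that the relation $m\approx n$, defined by $\XX\oplus\FF^m\simeq\XX\oplus\FF^n$, is \emph{translation invariant}: adding a copy of $\FF^k$ to both summands gives $m+k\approx n+k$. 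Hence if $m_1\approx n_1$ and $m_2\approx n_2$, then $m_1+m_2\approx n_1+m_2\approx n_1+n_2$ by transitivity, so $(m_1-n_1)+(m_2-n_2)\in S$. Thus $S=d\ZZ$ for a unique $d=d(\XX)\in\NN\cup\{0\}$, and \ref{lem:cycleup} is then immediate: $\XX\oplus\UU\simeq\XX\oplus\VV$ if and only if $\XX\oplus\FF^{\dim\UU}\simeq\XX\oplus\FF^{\dim\VV}$, i.e.\ $\dim\UU-\dim\VV\in S=d\ZZ$.

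Next I would record an invariance property that lets the same $d$ govern \ref{lem:cycledown}. A direct computation with the definition of $S$ shows $S_{\WW\oplus\FF^c}=S_\WW$ for every $c\ge 0$ (because $(m+c)-(n+c)=m-n$), so $d(\WW\oplus\FF^c)=d(\WW)$; since $d$ is an isomorphism invariant, combining this with $\XX\simeq\WW\oplus\FF^{\codim_\XX(\WW)}$ (which holds by \ref{lem:finitedcodim}\ref{lem:CC}, since the algebraic complement of $\WW$ in $\XX$ is finite-dimensional) yields $d(\WW)=d(\XX)$ for every finite-codimensional closed subspace $\WW$ of $\XX$.

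Finally, for \ref{lem:cycledown} I would avoid the (generally invalid) cancellation ``$\UU\oplus\FF^l\simeq\VV\oplus\FF^l\Rightarrow\UU\simeq\VV$'' by passing to $\WW_0:=\UU\cap\VV$. Writing $k=\codim_\XX(\UU)$ and $l=\codim_\XX(\VV)$, the codimension identities used in the proof of \ref{lem:finitedcodim} show that $\WW_0$ is finite-codimensional in both $\UU$ and $\VV$, and that $a:=\codim_\UU(\WW_0)$ and $b:=\codim_\VV(\WW_0)$ satisfy $a+k=\codim_\XX(\WW_0)=b+l$, whence $a-b=l-k$. By \ref{lem:finitedcodim}\ref{lem:CC} one gets $\UU\simeq\WW_0\oplus\FF^a$ and $\VV\simeq\WW_0\oplus\FF^b$, while $d(\WW_0)=d(\XX)=d$ by the invariance step. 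Applying \ref{lem:cycleup} to $\WW_0$ then gives that $\UU\simeq\VV$ if and only if $\WW_0\oplus\FF^a\simeq\WW_0\oplus\FF^b$, i.e.\ $a-b\in d\ZZ$, i.e.\ $k-l\in d\ZZ$, as required. The main obstacle is precisely the failure of naive cancellation of finite-dimensional summands; the intersection trick converts the comparison of $\UU$ and $\VV$ into a single application of \ref{lem:cycleup} over the common subspace $\WW_0$, where the already-constructed invariant $d$ applies.
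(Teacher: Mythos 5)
Your reduction of \ref{lem:cycleup} to the difference set $S=\{\,m-n : \XX\oplus\FF^m\simeq\XX\oplus\FF^n\,\}$ has a genuine gap at the step you call ``immediate''. That $S$ is a subgroup $d\ZZ$ only tells you that for each element of $S$ there exists \emph{some} witness pair $(m_0,n_0)$ with $\XX\oplus\FF^{m_0}\simeq\XX\oplus\FF^{n_0}$; the ``if'' direction of \ref{lem:cycleup} requires that \emph{every} pair with difference in $d\ZZ$ be a witness pair (in particular it requires $\XX\simeq\XX\oplus\FF^{d}$, which you never establish). Translation invariance propagates an isomorphism upward, from $(m_0,n_0)$ to $(m_0+k,n_0+k)$ with $k\ge 0$, but never downward, and the three formal properties you use (equivalence relation, translation invariance, subgroup of differences) are genuinely insufficient: on $\NN\cup\{0\}$ declare $m\approx n$ if and only if $m=n$, or $m,n\ge 1$ and $m\equiv n\pmod{2}$; this relation satisfies all three properties and has difference set $2\ZZ$, yet $0\not\approx 2$. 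So quasi-Banach-specific input is needed to come back down, and that input is precisely the cancellation law you dismiss as ``generally invalid'': if $\UU\oplus\FF^{l}\simeq\VV\oplus\FF^{l}$, then $\UU$ and the image of $\VV$ under the isomorphism are closed subspaces of the \emph{same} finite codimension $l$ of the same space, so Proposition~\ref{lem:finitedcodim}\ref{lem:CI} yields $\UU\simeq\VV$. Cancellation of \emph{finite-dimensional} summands is thus valid in every quasi-Banach space (what fails in general, by Gowers-type examples, is cancellation of infinite-dimensional summands), and it is exactly the engine of the paper's proof: there one takes $a$ minimal with $\XX\oplus\FF^{a}\simeq\XX\oplus\FF^{n}$ for some $n>a$, cancels one coordinate via Proposition~\ref{lem:finitedcodim} to force $a=0$, defines $d$ as minimal with $\XX\simeq\XX\oplus\FF^{d}$, and then runs an arithmetic argument. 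Your argument can be repaired the same way: given a witness pair $(m_0,n_0)$ with $m_0-n_0=jd$, cancel $\FF^{n_0}$ to get $\XX\simeq\XX\oplus\FF^{jd}$, and then translate upward to reach any prescribed pair.

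Once \ref{lem:cycleup} is repaired, your treatment of \ref{lem:cycledown} is correct, and is in fact a nice alternative to the paper's: the invariance $d(\WW\oplus\FF^{c})=d(\WW)$, hence $d(\WW)=d(\XX)$ for every finite-codimensional closed subspace $\WW$, together with the passage to $\WW_0=\UU\cap\VV$ and the identifications $\UU\simeq\WW_0\oplus\FF^{a}$, $\VV\simeq\WW_0\oplus\FF^{b}$ from Proposition~\ref{lem:finitedcodim}\ref{lem:CC}, reduces \ref{lem:cycledown} to a single application of \ref{lem:cycleup} for $\WW_0$ through honest isomorphisms. The paper instead compares $\UU\oplus\UU^{c}\oplus\VV^{c}$ with $\VV\oplus\UU^{c}\oplus\VV^{c}$, which again invokes Proposition~\ref{lem:finitedcodim}\ref{lem:CI}; so your route localizes all use of cancellation inside part \ref{lem:cycleup}, which is a genuine (if modest) structural simplification. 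As written, however, the proof is incomplete: part \ref{lem:cycledown} inherits the gap through its appeal to \ref{lem:cycleup}, and the premise used to justify the detour --- that finite-dimensional cancellation is unavailable --- is false in this setting.
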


\begin{proof}
Set $\XX[n]=\XX\oplus\FF^n$ for $n\in\NN\cup\{0\}$. If the spaces $(\XX[n])_{n=0}^\infty$ are mutually non-isomorphic, then \ref{lem:cycleup} holds with $d=0$. Assume that it is not the case and pick $a\in\NN\cup\{0\}$ minimal with the property that $\XX[a]\simeq\XX[n]$ for some $n>a$. If $a>0$ we would have
\[
\XX[a-1]\oplus \FF\simeq\XX[a]\simeq \XX[n-1]\oplus\FF.
\]
From Proposition~\ref{lem:finitedcodim} we would get $\XX[a-1]\simeq \XX[n-1]$, which contradicts the minimality of $a$. Hence, there is $d\in\NN$ minimal with the property that $\XX\simeq\XX[d]$. Given $m\in\NN\cup\{0\}$ we have $\XX[m]\simeq\XX[m+d]$. Using induction we obtain that $\XX[m]\simeq\XX[m+jd]$ for all $j\in\NN$.

Suppose that $0\le m<n$ are such that $\XX[m]\simeq\XX[n]$. Write $m=id+m'$ and $n=m+jd+d'$ with $i$, $j\in\NN\cup\{0\}$ and $m'$, $d'\in\ZZ\cap[0,d-1]$.
Then,
\begin{align*}
\XX&\simeq \XX[(i+1)d]
\simeq \XX[m]\oplus\FF^{d-m'}
\simeq \XX[(i+j)d+m'+d']\oplus\FF^{d-m'}\\
&\simeq \XX[(i+j+1)d+d']
\simeq \XX[d'].
\end{align*}
The minimality of $d$ gives $d'=0$, so $n-m=jd$. This proves \ref{lem:cycleup}.

Let $\UU$, $\VV$ be as in \ref{lem:cycledown}. Use Proposition~\ref{lem:finitedcodim} to pick complements
$\UU^c$ and $\VV^c$ of $\UU$ and $\VV$, respectively. By Proposition~\ref{lem:finitedcodim}, $\UU\simeq \VV$ if and only if
\[
\XX_1:= \UU \oplus \UU^c\oplus\VV^c \simeq \XX_2:= \VV \oplus \UU^c\oplus\VV^c.
\]
Since $\XX_1\simeq \XX\oplus \VV^c$ and $\XX_2\simeq \XX\oplus \UU^c$, applying \ref{lem:cycleup} yields that $\UU\simeq \VV$ if and only if $\dim(\VV^c)-\dim(\UU^c)\in d\,\ZZ$.
\end{proof}

\begin{remark}
Most Banach spaces $\XX$ are isomorphic to their hyperplanes, or, in the terminology of Lemma~\ref{lem:cycle}, $d(\XX)=1$. In fact, Banach \cite{Banach1932} conjectured that any Banach space should have this property. This question was solved in the negative in \cite{Gowers1994} by Gowers, who exhibited examples of Banach spaces $\XX$ with $d(\XX)=0$. Subsequently, Gowers and Maurey constructed for every $d$ a separable Banach space $\XX$ with $d(\XX)=d$ (see \cite{GowMau}*{Theorem 26 and following remarks}).
\end{remark}

Now we are ready to state and prove the general condition for the pair $(\XX,\UU)$ to be splitting for unconditional bases.
\begin{lemma}\label{lem:split}
Let $\XX$ be a quasi-Banach space and $\UU$ be a Banach space.
Suppose that:
\begin{enumerate}[label=(\roman*),leftmargin=*,widest=iii]
\item\label{c:SCS} $(\XX,\UU)$ is splitting for complemented subspaces;
\item $(\widehat{\XX} , \UU)$ is splitting for unconditional bases;
\item $\widehat{\XX}$ and $\UU$ are complementably incomparable; and
\item\label{c:FNL} either $\UU$ is not a proper envelope, or $\XX$ is natural and $\UU$ is not a proper envelope of natural spaces.
\end{enumerate}
Then the pair $(\XX,\UU)$ is splitting for unconditional bases.
\end{lemma}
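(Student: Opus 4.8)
The plan is to read off a candidate splitting from the Banach envelope and then repair it by relocating finitely many basis vectors. Since the envelope of a finite direct sum is the direct sum of the envelopes and $\UU$ is its own envelope, $\widehat{\XX\oplus\UU}\simeq\widehat{\XX}\oplus\UU$, and the basis envelope $\widehat{\BB}=J(\BB)$ of a given unconditional basis $\BB=(\bb_n)_{n\in\Nt}$ of $\XX\oplus\UU$ is an unconditional basis of $\widehat{\XX}\oplus\UU$. By hypothesis (ii) it splits along a partition $\Nt=\Nt_1\sqcup\Nt_2$ so that the two subbases recover $\widehat{\XX}$ and $\UU$. Writing $\VV_i=[\bb_n\colon n\in\Nt_i]$, each $\VV_i$ is complemented, so by \cite{AABW2019}*{Corollary 9.7} the span of $\widehat{\BB}$ over $\Nt_i$ is isomorphic to $\widehat{\VV_i}$; hence $\widehat{\VV_1}\simeq\widehat{\XX}$ and $\widehat{\VV_2}\simeq\UU$. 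I may assume $\UU$ is infinite-dimensional, the contrary case being covered by Proposition~\ref{lem:finitedcodim}. The goal is then to adjust $\Nt_1,\Nt_2$ into a partition for which $\VV_1\simeq\XX$ and $\VV_2\simeq\UU$.

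First I would pin down $\VV_2$ using hypothesis (iv). Since $\VV_2$ is a subspace of $\XX\oplus\UU$ with $\widehat{\VV_2}\simeq\UU$, in either alternative of (iv) the envelope map $\VV_2\to\widehat{\VV_2}$ can witness $\UU$ as an envelope only trivially, forcing $\VV_2$ to be locally convex, so that $\VV_2\simeq\widehat{\VV_2}\simeq\UU$. For the second alternative one needs $\VV_2$ to be natural; this is automatic, because $\XX\oplus\UU$ is natural (the sup-product of two $L$-convex lattices is $L$-convex and every Banach space is natural) and every subspace of a natural space is natural.

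Next I analyze $\VV_1$. By hypothesis (i) there is an automorphism $T$ of $\XX\oplus\UU$ with $T(\VV_1)=\YY_1\oplus\YY_2$, where $\YY_1$ is complemented in $\XX$ with complement $\WW_1$ and $\YY_2$ is complemented in $\UU$ with complement $\WW_2$; thus $\XX\simeq\YY_1\oplus\WW_1$ and $\VV_1\simeq\YY_1\oplus\YY_2$. Passing to envelopes, $\widehat{\XX}\simeq\widehat{\VV_1}\simeq\widehat{\YY_1}\oplus\YY_2$, so $\YY_2$ is complemented in both $\widehat{\XX}$ and $\UU$ and is finite-dimensional by (iii); say $\dim\YY_2=b$. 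As $\VV_2$ is a complement of $\VV_1$, the space $T(\VV_2)$ is a complement of $\YY_1\oplus\YY_2$, whence $\UU\simeq\VV_2\simeq\WW_1\oplus\WW_2$ by Lemma~\ref{lem:UC}. Now $\WW_1$ is complemented in $\XX$ and, being complemented in $\UU$, is locally convex, so $\WW_1\simeq\widehat{\WW_1}$ is complemented in both $\widehat{\XX}$ and $\UU$ and hence finite-dimensional by (iii); say $\dim\WW_1=a$. Therefore $\XX\simeq\YY_1\oplus\FF^a$ and $\VV_1\simeq\YY_1\oplus\FF^b$. Comparing $\UU\simeq\WW_1\oplus\WW_2\simeq\FF^a\oplus\WW_2$ with $\UU\simeq\YY_2\oplus\WW_2\simeq\FF^b\oplus\WW_2$ and applying Lemma~\ref{lem:cycle}\ref{lem:cycleup} to $\WW_2$, whose cycle length equals $d(\UU)$ since $\UU\simeq\WW_2\oplus\FF^a$, yields the crucial congruence $a\equiv b\pmod{d(\UU)}$.

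Finally I would correct the partition; assume without loss of generality $a\ge b$, the case $a<b$ being symmetric. Relocating $a-b$ basis vectors from $\Nt_2$ into $\Nt_1$ and using that any subbasis of an unconditional basis is complemented in its span, the new blocks $\VV_1',\VV_2'$ satisfy $\VV_1'\simeq\VV_1\oplus\FF^{a-b}\simeq\YY_1\oplus\FF^a\simeq\XX$ and $\UU\simeq\VV_2\simeq\VV_2'\oplus\FF^{a-b}$. Since $a-b$ is a multiple of $d(\UU)$, Lemma~\ref{lem:cycle}\ref{lem:cycledown} gives $\VV_2'\simeq\UU$, so the modified partition is the desired splitting. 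The main obstacle is precisely the control of this finite-dimensional discrepancy: the envelope determines $\VV_1$ and $\XX$ only up to a finite summand, and the cycle lengths of $\YY_1$, $\widehat{\YY_1}$ and $\UU$ need not coincide, so one cannot simply cancel. The congruence $a\equiv b\pmod{d(\UU)}$ extracted via Lemma~\ref{lem:UC} is exactly what allows a single relocation of $a-b$ vectors to repair $\VV_1$ and preserve $\VV_2$ at the same time.
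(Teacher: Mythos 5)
Your proposal is correct and follows essentially the same route as the paper's proof: split the envelope basis, identify the $\UU$-block as genuinely isomorphic to $\UU$ via hypothesis \ref{c:FNL}, split the span of the other block via hypothesis \ref{c:SCS}, use Lemma~\ref{lem:UC} together with complementable incomparability (pulled back through envelopes, i.e., Lemma~\ref{lem:CIfromEnv}) to reduce everything to a finite-dimensional discrepancy, and then repair the partition by relocating finitely many basis vectors using Lemma~\ref{lem:cycle}. The only cosmetic differences are that you rederive Lemma~\ref{lem:CIfromEnv} inline and compress the paper's third case (your $a<b$, which is not literally symmetric since the $\XX$-side there is finished by the cancellation in Proposition~\ref{lem:finitedcodim} rather than by Lemma~\ref{lem:cycle}) into a one-word remark, but the needed tools are all already in your argument.
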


\begin{proof}
Let $J_\XX\colon\XX\to\widehat{\XX}$ be the envelope map. By \cite{AABW2019}*{Corollary 9.7}, $\widehat{\XX}\oplus \UU$ is isomorphic to the Banach envelope of $\XX\oplus \UU$ via the map $J=(J_\XX,\Id_\UU)$.

Let $\BB=(\bb_n)_{n\in\Nt}$ be an unconditional basis of $\XX\oplus\UU$. By \cite{AABW2019}*{Proposition 9.9}, $J(\BB)$ is an unconditional basis of $\widehat{\XX}\oplus\UU$. Therefore we can choose a partition $(\Nt_y,\Nt_v)$ of $\Nt$ such that, if we denote $\BB_y=(\bb_n)_{n\in\Nt_y}$ and $\BB_v=(\bb_n)_{n\in\Nt_v}$, $J(\BB_y)$ generates a space isomorphic to $\widehat{\XX}$ and $J(\BB_v)$ generates a space isomorphic to $\UU$. Applying again \cite{AABW2019}*{Corollary 9.7} we obtain that $J([\BB_y])$ is isomorphic to the Banach envelope of $[\BB_y]$ via the map $J|_{[\BB_y]}$, and that $J([\BB_v])$ is isomorphic to the Banach envelope of $[\BB_v]$ via the map $J|_{[\BB_v]}$.

Since $J([\BB_v])\simeq \UU$, the map $J|_{[\BB_v]}$ is an isomorphic embedding. If $\XX$ is natural then $[\BB_v]$ is natural therefore $\BB_v$ generates a space isomorphic to $\UU$.

Since $[\BB_y]$ is complemented in $\XX\oplus\UU$, there exist a complemented subspace $\XX_0$ of $\XX$, a complemented subspace $\UU_0$ of $\UU$, and an automorphism $T$ of $\XX\oplus\UU$ such that $T([\BB_y])=\XX_0\oplus\UU_0$. In particular,
\[
[\BB_y]\simeq \XX_0\oplus\UU_0.
\]

If $\XX^c_0$ is a complement of $\XX_0$ in $\XX$ and $\UU^c_0$ is a complement of $\UU_0$ in $\UU$, then $\XX_0^c\oplus\UU_0^c$ is a complement of $\XX_0\oplus\UU_0$ in $\XX\oplus\UU$. Since $[\BB_u]$ is a complement of $[\BB_y]$ in $\XX\oplus\UU$, $T([\BB_v])$ is a complement of $\XX_0\oplus\YY_0$ in $\XX\oplus \UU$. Applying Lemma~\ref{lem:UC} we obtain
\[
\XX_0^c\oplus\UU_0^c\simeq T([\BB_v]) \simeq [\BB_v] \simeq \UU \simeq \UU_0\oplus \UU_0^c.
\]
$\UU_0$ is a complemented subspace of $\UU$ isomorphic to a complemented subspace of $[\BB_y]$. Since the Banach envelope of $[\BB_y]$ is isomorphic to $\widehat{\XX}$, applying Lemma~\ref{lem:CIfromEnv} yields that $[\BB_y]$ and $\UU$ are complementably incomparable. Thus, $\dim(\UU_0)<\infty$. In turn, $\XX_0^c$ is a complemented subspace of $\XX$ isomorphic to a complemented subspace of $\UU$. Applying again Lemma~\ref{lem:CIfromEnv} yields $\dim(\XX_0^c)<\infty$.

Let $d=d(\UU)\in\NN\cup\{0\}$ be as in Lemma~\ref{lem:cycle}. We infer that there is $j\in\ZZ$ such that
\[
\dim(\XX_0^c)-\dim(\UU_0)=jd.
\]
If $d=0$ or $d\ge 1$, and $j=0$ we have
\[
[\BB_y]\simeq\XX_0\oplus \UU_0\simeq \XX_0\oplus \XX_0^c\simeq \XX.
\]
If $d>0$ and $j>0$ we pick $\Ft\subseteq \Nt_v$ with $|\Ft|=jd$, and we set
\[
\BB_f=(\bb_n)_{n\in \Ft}, \quad
\BB_x=(\bb_n)_{n\in \Nt_y \cup \Ft} \text{, and} \quad
\BB_u=(\bb_n)_{n\in \Nt_v\setminus \Ft}.
\]
By Lemma~\ref{lem:cycle}~\ref{lem:cycledown}, $[\BB_u]\simeq\UU$. Moreover,
\[
[\BB_x]\simeq [\BB_y]\oplus [\BB_f] \simeq \XX_0\oplus\UU_0\oplus[\BB_f]
\simeq\XX_0\oplus\XX_0^c\simeq\XX.
\]
Finally, if $d>0$ and $j<0$ we pick $\Ft\subseteq \Nt_y$ with $|\Ft|=-jd$, and we set
\[
\BB_f=(\bb_n)_{n\in \Ft}, \quad
\BB_x=(\bb_n)_{n\in \Nt_y\setminus \Ft} \text{, and} \quad
\BB_u=(\bb_n)_{n\in \Nt_v\cup \Ft}.
\]
By Lemma~\ref{lem:cycle}~\ref{lem:cycleup}, $[\BB_u]\simeq [\BB_v]\oplus [\BB_f]\simeq\UU\oplus [\BB_f]\simeq\UU$. Moreover,
\[
[\BB_x]\oplus\UU_0
\simeq [\BB_x] \oplus [\BB_f] \oplus \XX_0^c
\simeq [\BB_y] \oplus \XX_0^c
\simeq \XX_0 \oplus \UU_0 \oplus \XX_0^c
\simeq \XX\oplus \UU_0.
\]
By Lemma~\ref{lem:finitedcodim}, $[\BB_x]\simeq\XX$.
\end{proof}

\section{The role of subprojectivity}\label{sect:SP}
\noindent
The following lemma gathers elementary connections between several concepts relevant to this paper.
\begin{lemma}\label{lem:relations}
Consider the following properties involving two quasi-Banach spaces $\XX$ and $\YY$:
\begin{enumerate}[label=(\roman*),leftmargin=*,widest=iii]
\item\label{it:C}Every bounded linear operator from $\XX$ into $\YY$ is compact.
\item\label{it:I} $\XX$ and $\YY$ are totally incomparable.
\item\label{it:SS} Every bounded linear operator from $\XX$ into $\YY$ is strictly singular.
\item\label{it:SCI} $(\XX,\YY)$ is semi-complementably incomparable.
\item\label{it:CI} $\XX$ and $\YY$ are complementably incomparable.
\end{enumerate}
We have \ref{it:C} $\Rightarrow$ \ref{it:SS}, \ref{it:I} $\Rightarrow$ \ref{it:SS}, \ref{it:SS} $\Rightarrow$ \ref{it:SCI}, and \ref{it:SCI} $\Rightarrow$ \ref{it:CI}.
\end{lemma}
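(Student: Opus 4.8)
The plan is to prove the four implications separately, each by a short argument directly from the definitions. The common thread is a reformulation of strict singularity that I would record first: a bounded operator $T$ \emph{fails} to be strictly singular precisely when there is an infinite-dimensional closed subspace $M$ of its domain on which $T$ restricts to an isomorphism onto its image $T(M)$. Three of the four implications turn on producing or excluding such an $M$, so it is worth isolating this reformulation at the outset.

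For \ref{it:C} $\Rightarrow$ \ref{it:SS} I would argue by contraposition. Given a bounded $T\colon\XX\to\YY$ that is not strictly singular, choose $M$ as above. The restriction of a compact operator to a subspace is compact, so $T|_M$ is a compact isomorphism onto $T(M)$; the image under $T|_M$ of the unit ball of $M$ is therefore relatively compact and, since $T|_M$ is an isomorphism, it contains a neighbourhood of the origin in $T(M)$. Hence $T(M)$ is locally compact and thus finite-dimensional, forcing $\dim M<\infty$, a contradiction. For \ref{it:I} $\Rightarrow$ \ref{it:SS} the same subspace $M$ satisfies $M\simeq T(M)$, where $M$ is a subspace of $\XX$ and $T(M)$ is a subspace of $\YY$; this infinite-dimensional space, embeddable in both, contradicts total incomparability.

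For \ref{it:SS} $\Rightarrow$ \ref{it:SCI} I again use contraposition. If $(\XX,\YY)$ is not semi-complementably incomparable, there is an infinite-dimensional space realized simultaneously as a complemented subspace $\XX_0$ of $\XX$ and a subspace $\YY_0$ of $\YY$, say via an isomorphism $\phi\colon\XX_0\to\YY_0$. Letting $P\colon\XX\to\XX_0$ denote a bounded linear projection and viewing $\YY_0$ inside $\YY$, the composition $\phi\circ P\colon\XX\to\YY$ restricts on $\XX_0$ to the isomorphism $\phi$, hence is not strictly singular. Finally, \ref{it:SCI} $\Rightarrow$ \ref{it:CI} is immediate from the definitions, since every complemented subspace of $\YY$ is in particular a subspace of $\YY$: any infinite-dimensional space witnessing the failure of complementable incomparability also witnesses the failure of semi-complementable incomparability.

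I expect the only point needing care to be the compactness step in \ref{it:C} $\Rightarrow$ \ref{it:SS}, where I invoke that a locally compact quasi-Banach space is finite-dimensional. The classical Riesz argument for this does not use local convexity, so it transfers verbatim to the quasi-Banach setting; all the remaining steps are routine manipulations with the definitions of the three incomparability notions.
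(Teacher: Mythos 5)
Your proposal is correct and takes essentially the same approach as the paper: for the only non-routine implication, (iii) $\Rightarrow$ (iv), you give exactly the paper's argument, composing a projection of $\XX$ onto the infinite-dimensional complemented subspace with the isomorphic embedding of it into $\YY$. The remaining implications, which the paper explicitly dismisses as routine, are correctly filled in by your standard arguments (including the valid observation that Riesz's theorem on locally compact spaces requires no local convexity, so compact operators are strictly singular in the quasi-Banach setting).
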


\begin{proof}
Only \ref{it:SS} $\Rightarrow$ \ref{it:SCI} deserves to be sketched. Suppose  \ref{it:SCI} does not hold. Then, there are  a projection $P\colon \XX\to\XX$ onto an infinite-dimensional subspace $\UU$, and an isomorphic embedding $T\colon\UU \to \YY$. The operator $T\circ P\colon\XX\to\YY$ is an isomorphic embedding when restricted to $\UU$.
\end{proof}
We will next see that under a mild condition on $\YY$, conditions \ref{it:SS} and \ref{it:SCI} in Lemma~\ref{lem:relations} are in fact equivalent.

\begin{definition}
We say that a quasi-Banach space $\YY$ is \emph{subprojective} if for every infinite dimensional subspace $\VV$ of $\YY$ there is a further subspace $\UU\subseteq\VV$ which is complemented in $\YY$. If, in addition, $\UU$ is isomorphic to one of the members of a given set $\Uf$ of infinite-dimensional quasi-Banach spaces, we say that $\YY$ is $\Uf$-subprojective (or complementably $\Uf$-saturated).
\end{definition}

Loosely speaking, the following lemma tells us that subprojectivity serves as the key that allows to pull-back complemented subspaces via non-strictly singular operators. Although the result is essentially known, for the sake of completeness we include a proof.

\begin{lemma}[cf.\ \cite{OS2015}*{Corollary 2.4}]\label{lem:ComPBlp}
Let $\VV$ be an infinite-dimensional subspace of a quasi-Banach space $\XX$, let $\Uf$ be a family of infinite-dimensional quasi-Banach spaces, and let $\YY$ be a $\Uf$-subprojective quasi-Banach space. Suppose that there is a bounded linear operator $T\colon\XX\to\YY$ such that $T|_\VV$ is an isomorphic embedding. Then there is a subspace of $\VV$ which is complemented in $\XX$ and isomorphic to some member of $\Uf$.
\end{lemma}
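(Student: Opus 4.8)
The plan is to apply subprojectivity to the image $T(\VV)$ and then transport the resulting complemented subspace back to $\XX$ through $T$, which is invertible on $\VV$. The whole argument is essentially a diagram chase once the right objects are in place.

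First, since $T|_\VV$ is an isomorphic embedding and $\VV$ is complete (being a closed subspace of the quasi-Banach space $\XX$), the image $T(\VV)$ is a complete, hence closed, infinite-dimensional subspace of $\YY$. This closedness is exactly what lets us invoke the hypothesis on $\YY$. Because $\YY$ is $\Uf$-subprojective, I can then find a subspace $\UU\subseteq T(\VV)$ that is complemented in $\YY$ and isomorphic to some member of $\Uf$; I fix a bounded projection $P\colon\YY\to\YY$ with $P(\YY)=\UU$.

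Next I pull this subspace back. I set $\WW=(T|_\VV)^{-1}(\UU)\subseteq\VV$. Since $\UU\subseteq T(\VV)$, the restriction $T|_\WW\colon\WW\to\UU$ is a bijection, and as a restriction of the isomorphic embedding $T|_\VV$ it is an isomorphism; hence $\WW\simeq\UU$ is isomorphic to a member of $\Uf$, so $\WW$ is the desired candidate. It then remains to verify that $\WW$ is complemented in $\XX$. For this I let $S=(T|_\WW)^{-1}\colon\UU\to\WW$ and define $Q=S\circ P\circ T\colon\XX\to\XX$, which is bounded as a composition of bounded maps and takes values in $\WW=S(\UU)$. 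For $w\in\WW$ we have $T(w)\in\UU$, so $P(T(w))=T(w)$ and therefore $Q(w)=S(T(w))=w$; thus $Q$ restricts to the identity on $\WW$, which gives $Q^2=Q$ and $Q(\XX)=\WW$. Hence $Q$ is a bounded projection of $\XX$ onto $\WW$, completing the argument.

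I do not expect a serious obstacle here, since the construction is transparent once subprojectivity is applied to $T(\VV)$; the two points that genuinely need care are that $T(\VV)$ is closed (so that subprojectivity is applicable at all, guaranteed by the isomorphic-embedding hypothesis together with completeness of $\VV$) and that the pulled-back space satisfies $T(\WW)=\UU$, which relies on the inclusion $\UU\subseteq T(\VV)$ coming out of the subprojectivity step. Everything else is the routine verification that $S\circ P\circ T$ is a projection onto $\WW$.
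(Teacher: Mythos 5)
Your proof is correct and takes essentially the same route as the paper: apply $\Uf$-subprojectivity to the closed infinite-dimensional subspace $T(\VV)$, pull the resulting complemented subspace $\UU$ back to $\WW=(T|_\VV)^{-1}(\UU)\subseteq\VV$, and check that $S\circ P\circ T$ is a bounded projection of $\XX$ onto $\WW$. The paper merely compresses your explicit pull-back step into the phrase ``passing to a subspace we can assume that $T(\VV)$ is complemented in $\YY$ and isomorphic to $\UU$,'' and then writes down the very same operator.
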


\begin{proof}
Passing to a subspace we can assume that $T(\VV)$ is complemented in $\YY$ and isomorphic to $\UU$ for some $\UU\in\Uf$. Let $S\colon T(\YY)\to \YY$ be the inverse operator of $T|_\VV$, and let $P\colon \YY\to T(\VV)$ be such that $P|_{T(\VV)}=\Id_{T(\VV)}$. Since the map $S\circ P\circ T\colon \XX\to\VV$ is the identity on $\VV$, $\VV$ is complemented in $\XX$.
\end{proof}

Banach space old-timers will be surely aware of the fact that a Banach space $\XX$ admits a non-strictly singular operator into $\ell_2$ if and only if $\ell_2$ is isomorphic to a complemented subspace of $\XX$. The following lemma provides an extension of this result.

\begin{proposition}\label{prop:CItoSS}
Let $\XX$ and $\YY$ be quasi-Banach spaces and $\Uf$ be a set of infinite-dimensional quasi-Banach spaces. Suppose that no space in $\Uf$ is isomorphic to a complemented subspace of $\XX$ and that $\YY$ is $\Uf$-subprojective. Then, every operator from $\XX$ into $\YY$ is strictly singular.
\end{proposition}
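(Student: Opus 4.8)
The plan is to argue by contraposition, reducing the statement to a direct application of Lemma~\ref{lem:ComPBlp}. Suppose, towards a contradiction, that some bounded linear operator $T\colon\XX\to\YY$ fails to be strictly singular. By the very definition of strict singularity, this means there is an infinite-dimensional subspace $\VV$ of $\XX$ on which $T$ restricts to an isomorphic embedding; that is, $T|_\VV\colon\VV\to\YY$ is an isomorphic embedding.

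Now I would feed this $\VV$ directly into Lemma~\ref{lem:ComPBlp}. The hypotheses of that lemma are met verbatim: $\VV$ is an infinite-dimensional subspace of $\XX$, the space $\YY$ is $\Uf$-subprojective by assumption, and $T\colon\XX\to\YY$ is a bounded linear operator whose restriction to $\VV$ is an isomorphic embedding. The lemma therefore produces a subspace $\WW\subseteq\VV\subseteq\XX$ which is complemented in $\XX$ and isomorphic to some member of $\Uf$. This immediately contradicts the standing hypothesis that no space in $\Uf$ is isomorphic to a complemented subspace of $\XX$, and the proof is complete.

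I do not expect any genuine obstacle here, since Lemma~\ref{lem:ComPBlp} already packages the subprojectivity argument (passing to a complemented sub-subspace via the inverse of $T|_\VV$ composed with a projection in $\YY$). The only point that requires a moment's care is the unwinding of ``not strictly singular'' into the existence of an infinite-dimensional subspace on which $T$ is an isomorphic embedding, which is precisely the input shape demanded by the lemma. In this sense the proposition is best viewed as the clean reformulation of Lemma~\ref{lem:ComPBlp} in contrapositive form, and it is the natural quasi-Banach analogue of the classical fact that a Banach space admits a non-strictly-singular operator into $\ell_2$ exactly when $\ell_2$ embeds complementably into it (here with $\Uf=\{\ell_2\}$ and $\YY$ being $\{\ell_2\}$-subprojective).
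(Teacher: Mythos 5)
Your proof is correct and follows exactly the same route as the paper's: assume $T$ is not strictly singular, so it embeds some infinite-dimensional subspace $\VV\subseteq\XX$ isomorphically into $\YY$, then apply Lemma~\ref{lem:ComPBlp} to produce a complemented subspace of $\XX$ isomorphic to a member of $\Uf$, contradicting the hypothesis. Nothing is missing.
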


\begin{proof}
Assume by contradiction that $T\colon \XX\to \YY$ is an isomorphic embedding when restricted to an infinite-dimensional subspace $\VV$ of $\XX$. By Lemma~\ref{lem:ComPBlp}, there is an infinite-dimensional subspace of $\VV$ which is complemented in $\XX$ and isomorphic to a space from $\Uf$.
\end{proof}

Our next result is a straightforward consequence of Proposition~\ref{prop:CItoSS}.
\begin{corollary}\label{cor:CItoSS}
Let $\XX$ and $\YY$ be quasi-Banach spaces. Suppose that $(\XX,\YY)$ is semi-complementably incomparable and that $\YY$ is subprojective. Then every operator from $\XX$ into $\YY$ is strictly singular.
\end{corollary}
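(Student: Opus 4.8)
The plan is to deduce the corollary from Proposition~\ref{prop:CItoSS} by feeding it a single, carefully chosen family $\Uf$, so that the two abstract hypotheses of the proposition are matched by the two assumptions at hand. The guiding observation is that the proposition cleanly separates the two roles: $\Uf$-subprojectivity is a property of $\YY$ alone, while the incomparability requirement is that no member of $\Uf$ embeds complementably in $\XX$. Subprojectivity of $\YY$ supplies a complemented (in $\YY$) subspace inside every infinite-dimensional subspace, and semi-complementable incomparability of $(\XX,\YY)$ is precisely the statement that such subspaces of $\YY$ cannot reappear as complemented subspaces of $\XX$. So the strategy is to let $\Uf$ be exactly the family of those complemented subspaces that subprojectivity produces.

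Concretely, I would let $\Uf$ be the collection of all infinite-dimensional closed subspaces of $\YY$ that are complemented in $\YY$. This is a legitimate set, since its members are subsets of the fixed space $\YY$, so the proposition is applicable to it. First I would check that $\YY$ is $\Uf$-subprojective: given an infinite-dimensional subspace $\VV$ of $\YY$, subprojectivity yields a subspace $\UU\subseteq\VV$ complemented in $\YY$; being infinite-dimensional and complemented in $\YY$, this $\UU$ lies in $\Uf$ and is (trivially) isomorphic to a member of $\Uf$, namely itself. Hence $\YY$ is $\Uf$-subprojective in the sense required.

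Next I would verify that no space in $\Uf$ is isomorphic to a complemented subspace of $\XX$. Fix $\UU\in\Uf$. By construction $\UU$ is an infinite-dimensional complemented subspace of $\YY$, and \emph{a fortiori} an infinite-dimensional subspace of $\YY$. If $\UU$ were isomorphic to a complemented subspace of $\XX$, then $\UU$ would be an infinite-dimensional quasi-Banach space isomorphic both to a complemented subspace of $\XX$ and to a subspace of $\YY$, contradicting the hypothesis that $(\XX,\YY)$ is semi-complementably incomparable. With both hypotheses of Proposition~\ref{prop:CItoSS} now in force for this $\Uf$, its conclusion gives that every operator from $\XX$ into $\YY$ is strictly singular.

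I do not anticipate a genuine obstacle here, in keeping with the claim that the corollary is a straightforward consequence of the proposition; the only point demanding a moment's care is the bookkeeping of definitions. Specifically, one must notice that a subspace complemented in $\YY$ is in particular a subspace of $\YY$, which is exactly the form needed to trigger the semi-complementable incomparability hypothesis (as opposed to the stronger, two-sided complementable incomparability), and that $\Uf$ may legitimately be taken to be a set rather than a proper class. Matching the \emph{one-sided} nature of both the subprojectivity conclusion and the incomparability assumption is the conceptual crux, but once $\Uf$ is chosen as above it is immediate.
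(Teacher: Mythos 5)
Your proof is correct and takes essentially the same route the paper intends: the paper states the corollary as a straightforward consequence of Proposition~\ref{prop:CItoSS}, and your instantiation of $\Uf$ as the set of all infinite-dimensional subspaces of $\YY$ that are complemented in $\YY$ is exactly the natural choice that makes both hypotheses of the proposition follow from subprojectivity of $\YY$ and semi-complementable incomparability of $(\XX,\YY)$, respectively. Your observation that only the \emph{one-sided} incomparability is needed (since members of $\Uf$ are in particular subspaces of $\YY$) is precisely the point of the corollary's formulation.
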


\subsection{Subprojective Banach spaces}
Although the definition of subprojectivity makes sense for quasi-Banach spaces, we know no example of a non-locally convex subprojective quasi-Banach space. The result of Stiles \cite{Stiles1972} that the space $\ell_p$ for $0<p<1$ is not subprojective, points out in the direction that no non-locally convex quasi-Banach space can be subprojective. For this reason, in this paper we will keep within bounds of locally convex spaces as far as subprojectivity is concerned. The applications we will obtain will rely on the results on the subject from \cite{OS2015}, were it is proved that subprojectivity is inherited by direct sums of Banach spaces. To be precise, we have the following.

\begin{proposition}[cf.\ \cite{OS2015}*{Proposition 2.2}]\label{prop:SPDS}
Let $I$ be a finite set. Suppose that for each $i\in I$, $\Uf_i$ is a set of infinite-dimensional Banach spaces and $\XX_i$ is a $\Uf_i$-subprojective Banach space. Then the space $\bigoplus_{i\in I} \XX_i$ is $\Uf$-subprojective, where $\Uf=\bigcup_{i\in I} \Uf_i$.
\end{proposition}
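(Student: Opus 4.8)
The final statement to prove is Proposition~\ref{prop:SPDS}: subprojectivity is inherited by finite direct sums of Banach spaces, tracking the saturation families $\Uf_i$.

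\medskip

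The plan is to reduce the finite case to the two-summand case by induction, so the heart of the matter is showing that if $\XX_1$ is $\Uf_1$-subprojective and $\XX_2$ is $\Uf_2$-subprojective, then $\XX_1\oplus\XX_2$ is $(\Uf_1\cup\Uf_2)$-subprojective. Write $P_1,P_2$ for the canonical projections of $\XX:=\XX_1\oplus\XX_2$ onto its summands. First I would take an arbitrary infinite-dimensional subspace $\VV\subseteq\XX$ and ask which projection is ``non-negligible'' on it. The natural dichotomy is to look at the restriction $P_1|_\VV$: either it is strictly singular, or it is not.

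\medskip

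If $P_1|_\VV$ is \emph{not} strictly singular, then there is an infinite-dimensional subspace $\VV'\subseteq\VV$ on which $P_1$ is an isomorphic embedding; then $P_1(\VV')$ is an infinite-dimensional subspace of $\XX_1$, so by $\Uf_1$-subprojectivity it contains a further subspace $\WW$ complemented in $\XX_1$ and isomorphic to some member of $\Uf_1$. The point is now to \emph{pull this back}: applying Lemma~\ref{lem:ComPBlp} with the operator $P_1\colon\XX\to\XX_1$ (whose target $\XX_1$ is $\Uf_1$-subprojective) and the subspace $(P_1|_{\VV'})^{-1}(\WW)\subseteq\VV'\subseteq\VV$, on which $P_1$ is an isomorphic embedding, yields a subspace of $\VV$ complemented in $\XX$ and isomorphic to a member of $\Uf_1\subseteq\Uf$, as required. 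If instead $P_1|_\VV$ \emph{is} strictly singular, then on some infinite-dimensional $\VV''\subseteq\VV$ the map $P_1|_{\VV''}$ is arbitrarily small; since $\Id_\VV=P_1|_\VV+P_2|_\VV$, the restriction $P_2|_{\VV''}$ must be an isomorphic embedding (on a possibly smaller subspace, controlling the quasi-triangle constant $\kappa$). We then repeat the previous argument with $P_2$ and $\Uf_2$, producing a subspace of $\VV$ complemented in $\XX$ and isomorphic to a member of $\Uf_2\subseteq\Uf$.

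\medskip

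The main technical obstacle is the ``strictly singular'' branch: from the fact that $P_1|_{\VV''}$ has small norm one must genuinely extract a subspace on which $P_2$ is bounded below. In the Banach setting this is the standard fact that a strictly singular perturbation of the identity is bounded below on a subspace; care is needed because we are working with quasi-norms, so the triangle inequality carries the modulus-of-concavity constant $\kappa$, and one should pass to a $p$-norm (via Aoki--Rolewicz) to make the estimate $\Vert P_2 v\Vert\gtrsim\Vert v\Vert$ clean. Once this is handled, everything else is an application of Lemma~\ref{lem:ComPBlp}, which does the pulling-back uniformly in either summand. Finally, the induction step from two summands to $\abs{I}$ summands is routine: group $\bigoplus_{i\in I}\XX_i$ as $\XX_{i_0}\oplus\bigl(\bigoplus_{i\neq i_0}\XX_i\bigr)$, use the inductive hypothesis that the second factor is $\bigl(\bigcup_{i\neq i_0}\Uf_i\bigr)$-subprojective, and apply the two-summand case, noting that a subspace complemented in a direct summand (which is itself complemented in $\XX$) is complemented in $\XX$.
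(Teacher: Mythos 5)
Your proof is correct and follows essentially the same route as the proof the paper relies on (the paper does not reprove this proposition but defers to \cite{OS2015}*{Proposition 2.2}): the dichotomy according to whether a coordinate projection restricted to $\VV$ is strictly singular, followed by pulling back complementation through Lemma~\ref{lem:ComPBlp}, and induction on the number of summands. One minor point: since every $\XX_i$ is a Banach space, the sup-norm direct sum is itself a Banach space, so your Aoki--Rolewicz/$\kappa$ precautions are vacuous; indeed, the Banach-space fact you invoke in the strictly singular branch (such an operator has arbitrarily small norm on some infinite-dimensional subspace) is precisely what becomes unclear for quasi-norms, which is why the paper states the proposition for Banach spaces only and remarks that the argument does not carry over to quasi-Banach spaces.
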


We emphasize that the proof of Proposition~\ref{prop:SPDS} does not carry over to quasi-Banach spaces since it is by no means clear whether the sum of two strictly singular operators is strictly singular in general. We refer the reader also to the article \cite{OS2015} for a list of subprojective Banach spaces. This section is aimed at adding to this list the convexifications of Tsirelson's space and their duals. To that end, we first introduce a few lemmas.

We say that a family $(\uu_n)_{n\in\Nt}$ in a quasi-Banach space $\XX$ and a family $(\vv_n)_{n\in\Nt}$ in a quasi-Banach space $\YY$ are \emph{congruent} (in $\XX$ and $\YY$) if there is an isomorphism $T$ of $\XX$ onto $\YY$ with $T(\uu_n)=\vv_n$ for all $n\in\Nt$. Congruence is stronger than equivalence. We introduce it because congruence ensures that if a subspace $\UU$ of $[\uu_n\colon n\in\Nt]$ is complemented in $\XX$, then the corresponding subspace $T(\UU)$ of $[\vv_n\colon n\in\Nt]$ is complemented in $\YY$.

\begin{lemma}[cf.\ \cite{BP1958}*{C2}]\label{lem:BSQB}
Let $I$ be a finite set. For each $i\in I$ let $\XX_i$ be a quasi-Banach space with a Schauder basis $\BB_i$. Suppose that $\XX$ is an infinite-dimensional quasi-Banach space and that $J_i\colon \XX\to \XX_i$ is an isomorphic embedding for each $i\in I$. Then $\XX$ has a basic sequence $\BB$ equivalent to a block basic sequence $\BB_i'$ with respect to $\BB_i$ for all $i\in I$. Moreover, if $\XX_i$ is locally convex, $J_i(\BB)$ and $\BB_i'$ are congruent.
\end{lemma}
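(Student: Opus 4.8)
The plan is to run a single gliding-hump (Bessaga--Pełczyński) selection inside $\XX$ that is compatible with all of the embeddings $J_i$ simultaneously, to read off the block basic sequences from the images of the selected vectors, and finally to upgrade the resulting equivalence to a congruence by a Neumann-series argument in the locally convex case. To fix the data, after an equivalent renorming I may assume that every $\XX_i$ is a $p$-Banach space for one common $p\in(0,1]$ and that each $\BB_i=(\xx_{i,n})_n$ is normalized; write $(\xx_{i,n}^*)_n$ for its coordinate functionals and $P_{i,N}$ for the partial-sum projections, which are uniformly bounded because $\BB_i$ is a Schauder basis. I will normalize the vectors $\bb_m$ that I build, so that the images $J_i(\bb_m)$ are semi-normalized with bounds uniform in $i$ (the set $I$ is finite and each $J_i$ is an isomorphic embedding).

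The construction is inductive and produces, alongside $(\bb_m)_m$, a sequence $(\uu_{i,m})_m$ in each $\XX_i$ with $\Vert J_i(\bb_m)-\uu_{i,m}\Vert\le\varepsilon_m$, where the $\varepsilon_m>0$ are fixed in advance so that $\sum_m\varepsilon_m^p$ is as small as needed. The point of treating every $i$ with one $\bb_m$ is a finite-dimensional kernel argument. Suppose $\bb_1,\dots,\bb_{m-1}$ have been chosen so that their images under each $J_i$ are supported in $\{1,\dots,M_{m-1}\}$. Consider the bounded operator
\[
\Phi\colon\XX\to\bigoplus_{i\in I}\spn(\xx_{i,1},\dots,\xx_{i,M_{m-1}}),\qquad \Phi(v)=(P_{i,M_{m-1}}J_i(v))_{i\in I}.
\]
Its range is finite-dimensional, so $\Ker\Phi$ has finite codimension in $\XX$ and is therefore infinite-dimensional; I pick a normalized $\bb_m\in\Ker\Phi$. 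Then $P_{i,M_{m-1}}J_i(\bb_m)=0$ for all $i$, so $\supp(J_i(\bb_m))\subseteq(M_{m-1},\infty)$. Since the expansion of $J_i(\bb_m)$ converges, I may choose $N_{i,m}>M_{m-1}$ with $\Vert J_i(\bb_m)-P_{i,N_{i,m}}J_i(\bb_m)\Vert\le\varepsilon_m$ and set $\uu_{i,m}=P_{i,N_{i,m}}J_i(\bb_m)$; putting $M_m=\max_i N_{i,m}$ and iterating, the vectors $\uu_{i,m}$ are supported in successive blocks $(M_{m-1},M_m]$ and hence form a block basic sequence $\BB_i'$ with respect to $\BB_i$.

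Because $\sum_m\varepsilon_m^p$ is small, the standard small-perturbation principle for $p$-Banach spaces (see \cite{AlbiacKalton2016}) shows that for each $i$ the semi-normalized sequence $(J_i(\bb_m))_m$ is equivalent to the block basis $\BB_i'=(\uu_{i,m})_m$. Since $J_i$ is an isomorphic embedding, $\BB=(\bb_m)_m$ is equivalent to $(J_i(\bb_m))_m$ and therefore to $\BB_i'$ for every $i\in I$; in particular $\BB$ is a basic sequence, being equivalent to the basic sequence $\BB_i'$. This settles the first assertion.

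For the moreover part, suppose $\XX_i$ is locally convex, hence a Banach space. The block basic sequence $\BB_i'$ has coordinate functionals $(\uu_{i,m}^*)_m$ on $[\BB_i']$ whose norms are controlled by the basis constant of $\BB_i$; using Hahn--Banach I extend each to $\XX_i$ with the same norm. Setting $S(x)=\sum_m\uu_{i,m}^*(x)\,(J_i(\bb_m)-\uu_{i,m})$ defines a bounded operator on $\XX_i$ with $\Vert S\Vert\le C\big(\sum_m\varepsilon_m^p\big)^{1/p}<1$ once the $\varepsilon_m$ are small, so that $T:=\Id+S$ is invertible. As $T(\uu_{i,m})=J_i(\bb_m)$, the automorphism $T^{-1}$ of $\XX_i$ sends $J_i(\BB)$ onto $\BB_i'$, which is the asserted congruence. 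The quasi-Banach setting forces two routine adjustments that are the only technical points: the triangle inequality must be replaced by $p$-convexity throughout the perturbation and operator-norm estimates, and the uniform boundedness of the partial-sum projections must be invoked to run the selection. The genuine limitation lies in the congruence clause: converting the equivalence into an automorphism of $\XX_i$ requires extending the coordinate functionals of $\BB_i'$ off $[\BB_i']$, which is available through Hahn--Banach exactly when $\XX_i$ is locally convex. Note that the Neumann series for $(\Id+S)^{-1}$ already converges in any $p$-Banach operator algebra, so local convexity is needed precisely for this extension and nowhere else.
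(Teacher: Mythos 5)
Your proof is correct, but your selection mechanism is genuinely different from the paper's. The paper starts from a bounded separated sequence $(f_n)_{n=1}^\infty$ in $\XX$ (citing \cite{AlbiacAnsorena2020}*{Lemma 2.8}), passes to a subsequence so that $(J_i(f_n))_{n=1}^\infty$ converges coordinatewise with respect to $\BB_i$ for every $i\in I$ by a Cantor diagonal argument, and then takes the differences $\yy_n=f_{2n-1}-f_{2n}$ to get a semi-normalized sequence whose images are coordinatewise null; the block structure and the equivalence (congruence, in the locally convex case) are then obtained by the gliding hump plus small perturbations applied to a further subsequence. You instead build $(\bb_m)_m$ inductively by choosing normalized vectors in the kernel of the finite-rank operator $\Phi=(P_{i,M_{m-1}}\circ J_i)_{i\in I}$, which forces the supports of all images $J_i(\bb_m)$ to start beyond $M_{m-1}$ simultaneously; this dispenses with the separated-sequence lemma, the diagonalization, and any passage to subsequences, at the mild cost of invoking the uniform boundedness of the partial-sum projections --- a fact the paper's argument also uses implicitly through the continuity of the coordinate functionals. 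Your treatment of the ``moreover'' clause is also more explicit than the paper's: the paper merely asserts congruence when $\XX_i$ is locally convex, whereas you isolate exactly where local convexity enters (the Hahn--Banach extension of the coordinate functionals of $\BB_i'$ beyond $[\BB_i']$), correctly observing that the Neumann series for $(\Id+S)^{-1}$ converges in any $p$-Banach operator algebra. One wording slip should be fixed: your inductive hypothesis states that ``the images under each $J_i$ of $\bb_1,\dots,\bb_{m-1}$ are supported in $\{1,\dots,M_{m-1}\}$,'' which cannot be maintained, since those images generally have infinite support; what the induction actually provides, and all your argument needs, is that the truncations $\uu_{i,m'}$ are supported there, while $\supp(J_i(\bb_{m'}))\subseteq(M_{m'-1},\infty)$ for $m'<m$.
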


\begin{proof}
There is a sequence $(f_n)_{n=1}^\infty$ in $\XX$ with $\sup_n\Vert f_n\Vert<\infty$ and $\inf_{n\not=m} \Vert f_n-f_m\Vert>0$ (see \cite{AlbiacAnsorena2020}*{Lemma 2.8}). Passing to a subsequence, Cantor's classical diagonal argument gives that $(J_i(f_n))_{n=1}^\infty$ converges coordinate-wise with respect $\BB_i$ for all $i\in I$. Set $\yy_n=f_{2n-1}-f_{2n}$ for $n\in\NN$. Since $\BB:=(\yy_n)_{n=1}^\infty$ is semi-normalized and $J_i(\BB)$ is coordinate-wise null with respect $\BB_i$ for all $i\in I$, combining the gliding hump technique with the principle of small perturbations, passing to a further subsequence we obtain that $J_i(\BB)$ is equivalent (congruent if $\XX_i$ is locally convex) to a block basic sequence with respect to $\BB_i$ for all $i\in I$.
\end{proof}

As a by-product, we obtain conditions under which a quasi-Banach space always contains a basic sequence. Recall that the basic sequence problem for quasi-Banach spaces was solved in the negative by Kalton in \cite{Kalton1995}.
\begin{corollary}\label{cor:BSQB} Let $\XX$ be an infinite-dimensional quasi-Banach space.
If $\XX$ embeds in a quasi-Banach space with a Schauder basis, then $\XX$ contains a basic sequence.
\end{corollary}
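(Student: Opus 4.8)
The plan is to derive Corollary~\ref{cor:BSQB} directly from Lemma~\ref{lem:BSQB} by taking the index set $I$ to be a singleton. Suppose $\XX$ is an infinite-dimensional quasi-Banach space that embeds in a quasi-Banach space $\XX_1$ possessing a Schauder basis $\BB_1$. Write $J_1\colon\XX\to\XX_1$ for the isomorphic embedding witnessing this. Then the hypotheses of Lemma~\ref{lem:BSQB} are met with $I=\{1\}$: we have a single quasi-Banach space $\XX_1$ with Schauder basis $\BB_1$, an infinite-dimensional quasi-Banach space $\XX$, and an isomorphic embedding $J_1\colon\XX\to\XX_1$.

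Applying Lemma~\ref{lem:BSQB} then yields a basic sequence $\BB=(\yy_n)_{n=1}^\infty$ in $\XX$ (equivalent to a block basic sequence $\BB_1'$ with respect to $\BB_1$). The key point is that the conclusion of Lemma~\ref{lem:BSQB} already asserts that $\BB$ is a basic sequence \emph{in $\XX$}, which is exactly the statement we want; the equivalence with a block basic sequence in $\XX_1$ is extra information that we simply discard for this corollary. Thus $\XX$ contains a basic sequence, as claimed.

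I do not anticipate any genuine obstacle here, since the corollary is a clean specialization of the lemma. The only thing to verify is that the single embedding hypothesis suffices, and indeed Lemma~\ref{lem:BSQB} is stated for an arbitrary finite index set $I$, so nothing prevents us from choosing $|I|=1$. The remark preceding the corollary—that Kalton showed the basic sequence problem has a negative answer for general quasi-Banach spaces in \cite{Kalton1995}—serves to emphasize that the embedding hypothesis is doing real work: it is precisely the existence of an ambient space with a Schauder basis that rescues the gliding hump and small-perturbation machinery invoked inside the proof of Lemma~\ref{lem:BSQB}.
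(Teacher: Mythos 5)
Your proof is correct and is exactly how the paper obtains this result: the corollary is stated as an immediate by-product of Lemma~\ref{lem:BSQB}, applied with a singleton index set, and the paper gives no further argument. Your specialization to $|I|=1$ and discarding of the block-basis equivalence is precisely the intended reading.
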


\begin{lemma}\label{lem:CSProj}
Let $\XX$ be a Banach space with an unconditional basis $\BB=(\xx_j)_{j\in\Nt}$. Let $\YY$ be the subspace of $\XX^*$ spanned by the biorthogonal functionals $\BB^*=(\xx_j^*)_{j\in\Nt}$ of $\BB$. Suppose that there are a partition $(\Nt_n)_{n=1}^\infty$ of $\Nt$ into finite sets and a constant $C$ such that
$
\left\Vert \sum_{n=1}^\infty f_n\right\Vert \le C \left\Vert \sum_{n=1}^\infty g_n\right\Vert
$
whenever $(f_n)_{n=1}^\infty$ and $(g_n)_{n=1}^\infty$ in $\XX$ satisfy $\Vert f_n\Vert \le \Vert g_n\Vert$ and $\supp(f_n)\cup\supp(g_n)\subseteq \Nt_n$ for all $n\in\NN$.
\begin{enumerate}[label=(\roman*),leftmargin=*,widest=ii]
\item\label{it:BBSinX} If $\BB_u=(\uu_n)_{n=1}^\infty$ and $\BB_v=(\vv_n)_{n=1}^\infty$ are semi-normalized sequences in $\XX$ with $\supp(\uu_n)\cup\supp(\vv_n)\subseteq\Nt_n$ for all $n\in\NN$, then $\BB_u$ and $\BB_v$ are equivalent. Moreover, $[\BB_u]$ and $[\BB_v]$ are complemented in $\XX$.
\item\label{it:BBSinY} If $\BB_y^*=(\yy_n^*)_{n=1}^\infty$ and $\BB_w^*=(\ww_n^*)_{n=1}^\infty$ are semi-normalized sequences in $\YY$ with $\supp(\yy_n^*)\cup\supp(\ww_n^*)\subseteq\Nt$ for all $n\in\NN$, then $\BB_y^*$ and $\BB_w^*$ are equivalent. Moreover, $[\BB_y^*]$ and $[\BB_w^*]$ are complemented in $\YY$.
\end{enumerate}
\end{lemma}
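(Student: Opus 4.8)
The plan is to derive both items from a single ``blocking inequality'' together with its dual, after which equivalence and complementation follow by identical rescaling and projection-building arguments. Throughout, let $K$ denote the unconditional constant of $\BB$, so that each coordinate projection $P_n\colon\XX\to\XX$, $P_n(f)=\sum_{j\in\Nt_n}\xx_j^*(f)\,\xx_j$, satisfies $\|P_n\|\le K$, its range $\XX_n:=P_n(\XX)$ is finite-dimensional, and $\sum_n P_n f=f$ (the grouped series converging) for every $f\in\XX$; the adjoints $P_n^*$ enjoy the analogous properties on $\YY$, where $\BB^*$ is unconditional with the same constant. Let $c\le M$ be semi-normalization bounds for the sequences at hand.

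For item \ref{it:BBSinX}, equivalence is immediate: given finitely supported scalars $(a_n)$, set $f_n=a_n\uu_n$ and $g_n=(M/c)a_n\vv_n$; then $\|f_n\|\le\|g_n\|$ and $\supp(f_n)\cup\supp(g_n)\subseteq\Nt_n$, so the hypothesis gives $\|\sum a_n\uu_n\|\le C(M/c)\|\sum a_n\vv_n\|$, and the reverse estimate follows by symmetry. For complementation I would, for each $n$, use finite-dimensionality of $\XX_n$ to pick a functional supported on $\Nt_n$, say $\phi_n\in\XX^*$ with $\phi_n=\phi_n\circ P_n$, $\phi_n(\uu_n)=1$ and $\|\phi_n\|\le K/c$, and define $P(f)=\sum_n\phi_n(P_n f)\,\uu_n$. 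Writing $f_n=\phi_n(P_n f)\uu_n$ and $g_n=P_n f$, the blocking inequality (after rescaling by $M/c$) bounds each partial sum $\sum_{n\le N}f_n$ by $C(M/c)\|\sum_{n\le N}P_n f\|\le C(M/c)K\|f\|$; and since $\sum_n P_n f=f$ converges, the blocks $\sum_{N\le n\le N'}P_n f$ tend to $0$, whence the same inequality forces $\sum_n f_n$ to be Cauchy and hence convergent. Thus $P$ is a bounded operator, $P(\uu_m)=\phi_m(\uu_m)\uu_m=\uu_m$ because $P_n\uu_m=\delta_{nm}\uu_m$, and so $P$ is a bounded projection of $\XX$ onto $[\BB_u]$; the same construction handles $\BB_v$.

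The heart of item \ref{it:BBSinY} is the corresponding \emph{dual} blocking inequality: if $(f_n^*)$, $(g_n^*)$ are blockwise supported in $\XX^*$ with $\|f_n^*\|\le\|g_n^*\|$, then $\|\sum_n f_n^*\|\le C^*\|\sum_n g_n^*\|$ for a constant $C^*$ depending only on $C$ and $K$. I would prove this for finitely many nonzero terms (which suffices for every later use) by testing against $f\in\XX$ with $\|f\|\le1$. Put $h_n=P_n f\in\XX_n$; since each block is finite-dimensional, choose a norming vector $\tilde h_n\in\XX_n$ with $\|\tilde h_n\|=\|h_n\|$ and $g_n^*(\tilde h_n)=\|h_n\|\,\|g_n^*\|_n\ge0$, where $\|\cdot\|_n$ denotes the functional norm on $\XX_n$, which is comparable to the $\XX^*$-norm up to the factor $K$. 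As $\|\tilde h_n\|=\|P_n f\|$, the \emph{primal} inequality yields $\|\sum_n\tilde h_n\|\le C\|f\|\le C$; and because $g_m^*$ and $\tilde h_n$ live in complementary blocks for $m\ne n$, one computes $(\sum_m g_m^*)(\sum_n\tilde h_n)=\sum_n\|h_n\|\,\|g_n^*\|_n\ge K^{-1}\sum_n|f_n^*(h_n)|\ge K^{-1}|(\sum_n f_n^*)(f)|$, using $\|f_n^*\|_n\le K\|g_n^*\|_n$. Combining these, $|(\sum_n f_n^*)(f)|\le KC\,\|\sum_n g_n^*\|$, and taking the supremum over $f$ proves the dual inequality with $C^*=KC$.

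With the dual blocking inequality in hand, item \ref{it:BBSinY} mirrors item \ref{it:BBSinX}: rescaling gives equivalence of $\BB_y^*$ and $\BB_w^*$, and a projection onto $[\BB_y^*]$ is built exactly as before, except that the coordinate functionals on $\YY$ are now supplied by predual vectors $h_n\in\XX_n\subseteq\XX$ with $\yy_n^*(h_n)=1$ (available because $\XX_n$ is finite-dimensional and separates the block $P_n^*(\XX^*)$), setting $P^*(\xi)=\sum_n\xi(h_n)\,\yy_n^*$ and arguing convergence and boundedness through the dual inequality and $\sum_n P_n^*\xi=\xi$. I expect the main obstacle to be the dual blocking inequality, and specifically the bookkeeping of the unconditional constant $K$ when passing between the $\XX^*$-norm and the block functional norms $\|\cdot\|_n$; everything else is a routine transcription of the primal argument.
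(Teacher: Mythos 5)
Your proof is correct, and it is worth comparing with the paper's, since the two handle item \ref{it:BBSinY} by genuinely different mechanisms. For item \ref{it:BBSinX} you and the paper do essentially the same thing: the paper also produces, via Hahn--Banach on the finite-dimensional blocks, functionals $\uu_n^*$ supported in $\Nt_n$ with $\uu_n^*(\uu_n)=1$ and uniformly bounded norms, and builds the operator $P(f)=\sum_n \uu_n^*(f)\,\vv_n$, whose boundedness follows from the blocking hypothesis exactly as in your argument. The divergence is in item \ref{it:BBSinY}: the paper never formulates a dual blocking inequality. Instead it notes that the adjoint of $P$ is again an operator of the same type, $P^*(f^*)=\sum_n f^*(\vv_n)\,\uu_n^*$, so that $P^*(\vv_n^*)=\uu_n^*$; equivalence and complementation on the dual side are then read off from adjoints of operators already known to be bounded by the primal hypothesis (taking $\BB_v=\BB_u$, resp.\ $\BB_u^*=\BB_v^*$, to get the projections). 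You instead dualize the hypothesis itself, proving $\bigl\Vert \sum_n f_n^*\bigr\Vert \le KC \bigl\Vert \sum_n g_n^*\bigr\Vert$ via norming vectors in the finite-dimensional blocks, and then re-run the primal construction inside $\YY$; your bookkeeping of the constant $K$ between the $\XX^*$-norm and the block norms $\Vert\cdot\Vert_n$ is correct. Your route costs an extra lemma, but it buys two things: a self-dualization statement of independent interest, and an explicit verification --- via your tail/Cauchy argument using $\sum_n P_n^*\xi=\xi$ for $\xi\in\YY$ --- that the dual-side series converge in norm and the projection maps $\YY$ into $[\BB_y^*]$. That last point is precisely what legitimizes the paper's assertion that $P^*$ projects onto $[\BB_u^*]$ once one restricts to $\YY$: on all of $\XX^*$ the range of $P^*$ need not be the norm-closed span (for $\BB$ the unit vector basis of $\ell_1$ and $\uu_n=\xx_n$, $P^*$ is the identity of $\ell_\infty$, whose range is not $[\BB^*]=c_0$), a point the paper leaves implicit.
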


\begin{proof}

Consider the following conditions on a pair $(\BB_z,\BB^*_z)$ formed by a sequence $\BB_z=(\zz_n)_{n=1}^\infty$ in $\XX$ and a sequence $\BB^*_z=(\zz^*_n)_{n=1}^\infty$ in $\XX^*$:
\begin{enumerate}[label=($\clubsuit$),leftmargin=*]
\item\label{it:BBSXY} $\zz_n^*(\zz_n)=1$, \quad $\supp(\zz_n)\cup\supp(\zz_n^*)\subseteq\Nt_n$ for all $n\in\NN$,\quad and \newline $k[\BB_z,\BB_z^*]:=\sup_n \max\{\Vert \zz_n \Vert, \Vert \zz_n^* \Vert\}<\infty$.
\end{enumerate}

Under the assumptions in \ref{it:BBSinX} (resp.\ in \ref{it:BBSinY}) there are sequences $\BB_u^*$ and $\BB_v^*$ in $\XX^*$ (resp.\ $\BB_y$ and $\BB_w$ in $\XX$) such that the pairs
$(\BB_u,\BB_u^*)$ and $(\BB_v,\BB_v^*)$ (resp.\ $(\BB_y,\BB_y^*)$ and $(\BB_w,\BB_w^*)$) fulfil \ref{it:BBSXY}. So, it suffices to prove that if $(\BB_u,\BB_u^*)$ and $(\BB_v,\BB_v^*)$ satisfy
\ref{it:BBSXY}, then $\BB_u$ and $\BB_v$ are equivalent, $\BB_u^*$ and $\BB_v^*$ are equivalent, $[\BB_u]$ and $[\BB_u]$ are complemented in $\XX$, and $[\BB_u^*]$ and $[\BB_v^*]$ are complemented in $\XX^*$.

Set $\BB_u^*=(\uu_n^*)_{n=1}^\infty$, $\BB_v^*=(\vv_n^*)_{n=1}^\infty$, and $k=k[\BB_u,\BB_u^*]$. If we denote by $S_A\colon\XX\to\XX$ the coordinate projection on a set $A\subseteq \Nt$ we have
\[
|\uu_n^*(f)| \, \Vert \uu_n\Vert \le k^2 \Vert S_{\Nt_n}(f) \Vert,\quad f\in\XX, \ n\in\NN.
\]
Hence, the linear map $P\colon\XX\to\XX$ given by
\[
P(f)=\sum_{n=1}^\infty \uu_n^*(f) \vv_n, \quad f\in \XX
\]
is well-defined and satisfies $\Vert P\Vert \le k^2 C$. We also have $P(\uu_n)=\vv_n$ for all $n\in\NN$. The dual map $P^*\colon \XX^*\to \XX^*$ is given by
\[
P^*(f^*) (f) =\sum_{n=1}^\infty f^*( \vv_n) \uu_n^*(f), \quad f\in\XX, \ f^*\in\XX^*.
\]
Hence, $P^*(\vv_n^*)=\uu_n^*$ for all $n\in\NN$.

The operators defined when replacing $\BB_v$ with $\BB_u$ and $\BB_u^*$ with $\BB_v^*$ yield the equivalence between $\BB_u$ and $\BB_v$, as well as the equivalence between $\BB_u^*$ and $\BB_v^*$. The operator defined when replacing only $\BB_v$ with $\BB_u$ yields projections from $\XX$ onto $[\BB_u]$ and from $\XX^*$ onto $[\BB_u^*]$. Finally, the operator defined when replacing only $\BB_u^*$ with $\BB_v^*$ yields projections from $\XX$ onto $[\BB_v]$ and from $\XX^*$ onto $[\BB_v^*]$.
\end{proof}

\begin{theorem}\label{thm:AnsoSP}
Let $\XX$ be a Banach space with an unconditional basis $\BB=(\xx_j)_{j=1}^\infty$. Let $\YY$ be the subspace of $\XX^*$ spanned by the biorthogonal functionals $\BB^*=(\xx_j^*)_{j\in\Nt}$ of $\BB$. Suppose that there are a constant $C$ and an increasing sequence $(j_n)_{n=1}^\infty$ in $\NN$ with the following property:
$
\left\Vert \sum_{n=1}^\infty f_n\right\Vert \le C \left\Vert \sum_{n=1}^\infty g_n\right\Vert
$
whenever $(f_n)_{n=1}^\infty$ and $(g_n)_{n=1}^\infty$ in $\XX$ and $(k_n)_{n=1}^\infty$ in $\NN$ satisfy $\Vert f_n\Vert \le \Vert g_n\Vert$, $\supp(f_n)\cup\supp(g_n) \subseteq[k_n, k_{n+1}-1]$, and $j_n\le k_n$ for all $n\in\NN$. Then, $\XX$ is $\Uf$-subprojective and $\YY$ is $\Uf^*$-subprojective, where
\begin{align*}
\Uf&=\{[\xx_{k_n} \colon n\in\NN]\colon\quad \forall n\in\NN,\ j_n\le k_n<k_{n+1}\},\\
\Uf^*&=\{[\xx_{k_n}^* \colon n\in\NN]\colon \quad \forall n\in\NN,\ j_n\le k_n<k_{n+1}\}.
\end{align*}
\end{theorem}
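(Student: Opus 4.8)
The plan is to prove both assertions in parallel, treating $\XX$ with the unconditional basis $\BB=(\xx_j)_j$ and $\YY=[\xx_j^*\colon j]$ with the unconditional basis $\BB^*=(\xx_j^*)_j$ on the same footing, invoking part \ref{it:BBSinX} of Lemma~\ref{lem:CSProj} in the first case and part \ref{it:BBSinY} in the second. I will describe the argument for $\XX$; the one for $\YY$ is verbatim with stars inserted. Let $\VV$ be an infinite-dimensional subspace of $\XX$. The first step is to replace $\VV$ by a manageable basic sequence inside it. Applying Lemma~\ref{lem:BSQB} to the single isomorphic embedding given by the inclusion $\VV\hookrightarrow\XX$ (legitimate since $\XX$ carries the Schauder basis $\BB$; cf.\ Corollary~\ref{cor:BSQB}), I obtain a semi-normalized basic sequence $(\zz_n)_n$ in $\VV$ and a block basic sequence $(\ww_n)_n$ with respect to $\BB$ which, because $\XX$ is locally convex, are \emph{congruent}: there is an automorphism $\Phi$ of $\XX$ with $\Phi(\zz_n)=\ww_n$ for all $n$. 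The use of congruence rather than mere equivalence is essential, since it transports complemented subspaces: any subspace of $[\ww_n\colon n]$ that is complemented in $\XX$ has a $\Phi^{-1}$-image that is a subspace of $[\zz_n\colon n]\subseteq\VV$, still complemented in $\XX$ and isomorphic to it.

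Next I would thin out the block basis to make the hypothesis of the theorem available. Since $\min\supp(\ww_n)\to\infty$ and $(j_n)_n$ is increasing, I can select $m_1<m_2<\cdots$ so that, writing $k_l=\min\supp(\ww_{m_l})$, one has $j_l\le k_l$ for every $l$; as the blocks are successive we get $k_l<k_{l+1}$ and $\supp(\ww_{m_l})\subseteq[k_l,k_{l+1}-1]$. Consequently the subsequence of basis vectors $(\xx_{k_l})_l$ satisfies $j_l\le k_l<k_{l+1}$, so that $[\xx_{k_l}\colon l]\in\Uf$. To feed this into Lemma~\ref{lem:CSProj} I must present a genuine partition of the index set into finite blocks on which the estimate holds, and here lies the one technical wrinkle: the intervals $\Nt_l:=[k_l,k_{l+1}-1]$ cover only $[k_1,\infty)$, while on the initial segment $[1,k_1-1]$ the hypothesized estimate is simply not available. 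I resolve this by passing to the tail $\XX':=[\xx_j\colon j\ge k_1]$, which is complemented in $\XX$ through the bounded coordinate projection attached to the unconditional basis. On $\XX'$ the family $(\Nt_l)_{l\ge1}$ is a bona fide partition into finite sets, and the inequality hypothesized in the theorem is exactly the hypothesis of Lemma~\ref{lem:CSProj} for this partition. Applying Lemma~\ref{lem:CSProj}\ref{it:BBSinX} to the semi-normalized sequences $(\ww_{m_l})_l$ and $(\xx_{k_l})_l$, both supported in the corresponding $\Nt_l$, yields that they are equivalent and that $[\ww_{m_l}\colon l]$ and $[\xx_{k_l}\colon l]$ are complemented in $\XX'$, hence in $\XX$.

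Finally I would assemble the pieces: $[\ww_{m_l}\colon l]$ is complemented in $\XX$ and isomorphic to $[\xx_{k_l}\colon l]\in\Uf$, so transporting by $\Phi^{-1}$ (if $P$ projects onto $[\ww_{m_l}\colon l]$, then $\Phi^{-1}P\Phi$ projects onto its image) produces $\UU:=[\zz_{m_l}\colon l]\subseteq\VV$, complemented in $\XX$ and isomorphic to a member of $\Uf$, which is precisely what $\Uf$-subprojectivity of $\XX$ requires. The argument for $\YY$ runs identically through the basis $\BB^*$, the complemented tail $\YY'=[\xx_j^*\colon j\ge k_1]$, part \ref{it:BBSinY} of Lemma~\ref{lem:CSProj}, and the congruence again supplied by Lemma~\ref{lem:BSQB} (available since $\YY$ is locally convex), outputting a subspace of the given subspace of $\YY$ that is complemented in $\YY$ and isomorphic to $[\xx_{k_l}^*\colon l]\in\Uf^*$. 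I expect the main obstacle to be exactly the interface with Lemma~\ref{lem:CSProj}: one has to convert the ``far-out'' estimate of the theorem, valid only once the supports begin beyond $j_n$, into an honest partition hypothesis, and the reduction to the complemented tail is what accomplishes this; the secondary point demanding care is to keep congruence (not just equivalence) throughout, so that complementation survives the passage back into $\VV$.
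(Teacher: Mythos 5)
Your proposal is correct and takes essentially the same route as the paper's proof: Lemma~\ref{lem:BSQB} produces a basic sequence in the given subspace congruent (not merely equivalent) to a block basic sequence, the blocks are thinned so that their supports start beyond the $j_n$'s, Lemma~\ref{lem:CSProj} then gives equivalence to the corresponding subsequence $(\xx_{k_l})_l$ (resp.\ $(\xx_{k_l}^*)_l$) together with complementation, and the congruence transports the complemented subspace back inside $\VV$. Your explicit reduction to the complemented tail $[\xx_j\colon j\ge k_1]$ is just a careful handling of the initial segment $[1,k_1-1]$, a point the paper leaves implicit (one can equally adjoin $[1,k_1-1]$ as one extra finite set of the partition, the required estimate then following from unconditionality at the cost of a constant).
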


\begin{proof}
Pick a subspace $\UU$ of $\XX$ (resp.\ of $\YY$). By Lemma~\ref{lem:BSQB}, passing to a subspace we can suppose that $\UU$ is spanned by a basic sequence, say $(\uu_k)_{k=1}^\infty$, congruent to a block basic sequence $(\vv_k)_{k=1}^\infty$. There is an increasing sequence $(k_n)_{n=1}^\infty$ in $\NN$ such that $j_n\le \min(\supp(\vv_{k_n}))$ for all $n\in\NN$. By Lemma~\ref{lem:CSProj}, $(\vv_{k_n})_{n=1}^\infty$ spans a complemented subspace of $\XX$ (resp.\ of $\YY$) isomorphic to $\VV:=[\xx_{k_n} \colon n\in\NN]$ (resp.\ $\VV:=[\xx_{k_n}^*\colon n\in\NN]$). By congruence  (equivalence is not enough!), $(\uu_{k_n})_{n=1}^\infty$ spans a complemented subspace of $\XX$ (resp.\ of $\YY$) isomorphic to $\VV$.
\end{proof}

It is known that Tsirelson's space $\Ts$ is subprojective \cite{GMBSB2011}*{Proposition 2.4}. This fact can be derived from the properties of the lattice structure on $\Ts$, and this is what we will use to show the subprojectivity of their convexifications and their duals. Given $0<r<\infty$, $\Ts^{(r)}$ denotes de quasi-Banach lattice consisting of all $f\in\FF^\NN$ such that $|f|^r\in\Ts$. Given $1<s\le \infty$, we denote by $\Ts_*^{(s)}$ the dual of the $r$-convexified Tsirelson's space $\Ts^{(r)}$, where $r=s/(s-1)$. With this terminology, $\Ts_*^{(\infty)}$ is the original Tsirelson's space $\Ts^*$. Since $\Ts$ is $1$-convex and $p$-concave for any $p>1$, $\Ts^{(r)}$ is $r$ convex and $p$ concave for any $p>r$. Consequently, $\Ts_*^{(s)}$ is $s$-concave and $p$-convex for any $p<s$.
\begin{theorem}\label{thm:TsirelsonSP}
Let $1\le r<\infty$ and $1<s\le \infty$. Let $(\tb_n)_{n=1}^\infty$ denote the unit vector system of $\Ts^{(r)}$, and $(\tb_n^*)_{n=1}^\infty$ denote the unit vector system of $\Ts_*^{(s)}$. Then $\Ts^{(r)}$ is $\Uf$-subprojective and $\Ts_*^{(s)}$ is $\Uf^*$-subprojective, where,
\begin{align*}
\Uf&=\{[\tb_{k_n} \colon n\in\NN] \colon \quad \forall n\in\NN,\ j_n\le k_n<k_{n+1}\},\\
\Uf^*&=\{[\tb_{k_n}^* \colon n\in\NN]\colon \quad \forall n\in\NN,\ j_n\le k_n<k_{n+1}\},
\end{align*}
and $(j_n)_{n=1}^\infty$ is an arbitrary increasing sequence in $\NN$.
\end{theorem}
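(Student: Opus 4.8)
The plan is to deduce the whole statement from Theorem~\ref{thm:AnsoSP}. Fix $1\le r<\infty$ and let $s\in(1,\infty]$ be the conjugate exponent, $1/r+1/s=1$, so that by definition $\Ts_*^{(s)}=(\Ts^{(r)})^*$. The space $\XX:=\Ts^{(r)}$ is a reflexive Banach space whose unit vector system $(\tb_n)_{n=1}^\infty$ is a $1$-unconditional basis; by reflexivity its biorthogonal functionals span the whole dual $\XX^*=\Ts_*^{(s)}$ and coincide with the unit vector system $(\tb_n^*)_{n=1}^\infty$. Moreover, the families $\Uf$ and $\Uf^*$ attached to an arbitrary increasing sequence $(j_n)$ in Theorem~\ref{thm:AnsoSP} are exactly those in the present statement. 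Hence it suffices to produce a constant $C$ for which the block-stability inequality in the hypothesis of Theorem~\ref{thm:AnsoSP} holds for $\Ts^{(r)}$; in fact I expect it to hold for \emph{every} increasing sequence $(j_n)$, which yields the conclusion for all such sequences at once, and the two assertions of the theorem emerge simultaneously (the one on $\Ts_*^{(s)}$ being the dual clause of Theorem~\ref{thm:AnsoSP}).

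Next I would reduce the inequality for $\Ts^{(r)}$ to the case $r=1$. By $1$-unconditionality we may assume that all blocks are nonnegative. For disjointly supported nonnegative blocks the identity $|\sum_n f_n|^r=\sum_n|f_n|^r$, together with $\|x\|_{\Ts^{(r)}}=\bigl\||x|^r\bigr\|_\Ts^{1/r}$, turns the desired estimate $\|\sum_n f_n\|_{\Ts^{(r)}}\le C\|\sum_n g_n\|_{\Ts^{(r)}}$ into
\[
\Bigl\|\sum_n |f_n|^r\Bigr\|_\Ts\le C^r\,\Bigl\|\sum_n |g_n|^r\Bigr\|_\Ts .
\]
Since $\bigl\||f_n|^r\bigr\|_\Ts=\|f_n\|_{\Ts^{(r)}}^r\le\|g_n\|_{\Ts^{(r)}}^r=\bigl\||g_n|^r\bigr\|_\Ts$ and the supports are unchanged, this is precisely the block-stability inequality for $\Ts$ (with constant $C^r$) applied to the positive blocks $|f_n|^r$ and $|g_n|^r$. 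Thus everything comes down to proving the inequality for Tsirelson's space itself.

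For $\Ts$, I would first normalize: writing $f_n=\|f_n\|_\Ts\,\hat f_n$ and $g_n=\|g_n\|_\Ts\,\hat g_n$, lattice monotonicity gives $\|\sum_n \|f_n\|_\Ts \hat f_n\|_\Ts\le\|\sum_n \|g_n\|_\Ts \hat f_n\|_\Ts$, because the disjoint supports together with $\|f_n\|_\Ts\le\|g_n\|_\Ts$ yield the pointwise domination of the moduli. This reduces the claim to the uniform domination
\[
\Big\|\sum_n c_n\hat f_n\Big\|_\Ts\le C\,\Big\|\sum_n c_n\hat g_n\Big\|_\Ts,\qquad c_n\ge 0,
\]
valid for every pair of normalized block sequences $(\hat f_n),(\hat g_n)$ with $\supp(\hat f_n)\cup\supp(\hat g_n)\subseteq[k_n,k_{n+1})$. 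Since the $k_n$ are strictly increasing positive integers, $k_n\ge n$, and the displayed estimate is the \emph{shift/spreading property} of $\Ts$: two normalized block sequences occupying the same sufficiently spread-out intervals are equivalent with a universal constant. I would establish this by the standard induction on the implicit equation
\[
\|x\|_\Ts=\max\Big(\|x\|_\infty,\ \tfrac12\sup\sum_{i=1}^l\|E_i x\|_\Ts\Big),
\]
the supremum ranging over admissible families of intervals $E_1<\cdots<E_l$ (i.e.\ $l\le\min E_1$): the constraint $k_n\ge n$ forces an admissible family to subdivide no more of the leading blocks than their cardinality permits, so the admissible analysis of $\sum_n c_n\hat f_n$ runs in parallel with that of $\sum_n c_n\hat g_n$ up to a constant that does not see the internal structure of the blocks.

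The main obstacle is this last step, the shift estimate for $\Ts$: controlling the interplay between an admissible family of intervals and the internal structure of the blocks while keeping the constant independent of the sequence $(k_n)$ and of the blocks themselves. The reduction steps above are routine, but this uniformity in $(k_n)$ is essential, since in the proofs of Lemma~\ref{lem:CSProj} and Theorem~\ref{thm:AnsoSP} the intervals $[k_n,k_{n+1})$ are chosen only after a subspace has been fixed. This spreading property is the lattice-theoretic fact that underlies the known subprojectivity of $\Ts$, and the convexification passage transfers it verbatim to $\Ts^{(r)}$ and, through the dual clause of Theorem~\ref{thm:AnsoSP}, to $\Ts_*^{(s)}$.
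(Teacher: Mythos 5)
Your proposal follows essentially the same route as the paper: verify the block-stability hypothesis of Theorem~\ref{thm:AnsoSP} for $\Ts$, transfer it to $\Ts^{(r)}$ via the disjoint-support identity of the $r$-convexification, and read off both conclusions at once (the dual one through the shrinkingness of the basis coming from reflexivity). The single step you flag as the main obstacle --- the uniform equivalence of normalized block sequences of $\Ts$ occupying the same support intervals --- is precisely what the paper imports as a known fact from \cite{CasShura1989}*{Corollary II.5} rather than reproving, so your argument is complete once that result is cited.
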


\begin{proof}
It is known that the unit vector system of $\Ts$ satisfies for any $(j_n)_{n=1}^\infty$ the assumptions in Theorem~\ref{thm:AnsoSP} (see \cite{CasShura1989}*{Corollary II.5}). Since $r$-convexifications inherits this property, the unit vector system $\Ts^{(r)}$ also does.
\end{proof}

We can also apply Theorem~\ref{thm:AnsoSP} to Nakano spaces. Given a sequence $(p_n)_{n=1}^\infty$ in $[1,\infty)$ we denote by $\ell(p_{n})$ the Banach space built from the modular
\[
(a_n)_{n=1}^\infty\mapsto \sum_{n=1}^\infty |a_n|^{p_n},
\]
and we denote by $h(p_{n})$ the separable part of $\ell(p_{n})$. We have $\ell(p_{n})=h(p_{n})$ if and only if $\sup_n p_n<\infty$. If there is $0<R<1$ such that
\[
\sum_{n=1}^\infty R^{pp_n/|p_n-p|}<\infty,
\]
then $\ell(p_{n})=\ell_p$ up to an equivalent norm. Since $\ell_p$ spaces are particular cases of Nakano spaces, the following generalizes \cite{Pel1960}*{Lemma 2}, which establishes that $\ell_p$ is $\ell_p$-subprojective.

\begin{theorem}[cf.\ \cite{RuizSanchez2018}*{Theorem 5.1}]
Let $(p_n)_{n=1}^\infty$ be a sequence in $[1,\infty)$. Suppose that there exists $\lim_n p_n=p\in[1,\infty]$. Then $h(p_{n})$ is $\ell_p$-subprojective (we replace $\ell_\infty$ with $c_0$ if $p=\infty$).
\end{theorem}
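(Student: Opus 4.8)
The plan is to obtain the statement as an application of Theorem~\ref{thm:AnsoSP}, taking $\XX=h(p_n)$ with its unit vector system $(\ee_n)$, which is a $1$-unconditional Schauder basis of this Banach space. Write $\rho(x)=\sum_i |x_i|^{p_i}$ for the modular, so that $\Vert\cdot\Vert$ is the associated Luxemburg norm. The observation I will use repeatedly is that for every infinite $M\subseteq\NN$ the subbasis $(\ee_n)_{n\in M}$ spans a subspace that is again, isometrically, a Nakano space, namely $h((p_n)_{n\in M})$. Consequently, once Theorem~\ref{thm:AnsoSP} applies, the members of the resulting family $\Uf$ will themselves be Nakano spaces, and the whole problem reduces to recognizing them as $\ell_p$ (or $c_0$).

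First I would fix the auxiliary sequence $(j_n)$. Since $p_n\to p$, for $p<\infty$ I choose $j_n\uparrow\infty$ so rapidly that $\delta_n:=\sup_{m\ge j_n}|p_m-p|$ satisfies $\delta_n\le 1/n$; this forces $\delta_1<p$ and makes both $\sum_n 2^{-1/\gamma_n}$ (with $\gamma_n:=2\delta_n/(p+\delta_n)$) and $\sum_n R^{p/\delta_n}$ (for a fixed $R\in(0,1)$) convergent. For $p=\infty$ I instead arrange that $\inf_{m\ge j_n}p_m$ grows fast enough that $\sum_{m\ge j_1}R^{p_m}<\infty$ for some $R\in(0,1)$.

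The main step, and the genuine obstacle, is to verify the block--lattice hypothesis of Theorem~\ref{thm:AnsoSP} for this $(j_n)$ and a suitable constant $C$. Given $(f_n)$, $(g_n)$ with $\Vert f_n\Vert\le\Vert g_n\Vert=:\mu_n$ and $\supp(f_n)\cup\supp(g_n)\subseteq[k_n,k_{n+1}-1]$, $k_n\ge j_n$, normalize so that $\Vert\sum_n g_n\Vert=1$. The supports being disjoint, the modular is additive; and $1$-unconditionality gives $\mu_n\le\Vert\sum_n g_n\Vert=1$. On the $n$-th block every exponent lies in $[p-\delta_n,p+\delta_n]$, so direct estimates of $\rho$ (using $\rho(g_n/\mu_n)=1$, valid since $\sup_i p_i<\infty$ when $p<\infty$) yield $\rho(g_n)\ge\mu_n^{p+\delta_n}$ and, for $\lambda\ge 1$, $\rho(f_n/\lambda)\le(\mu_n/\lambda)^{p-\delta_n}$. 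The first estimate together with $\rho(\sum_n g_n)\le 1$ gives $\sum_n\mu_n^{p+\delta_n}\le 1$. The crux is now the purely numerical inequality
\[
\sum_n \mu_n^{p-\delta_n}\le C_0,
\]
valid uniformly over all admissible $(\mu_n)$: setting $a_n=\mu_n^{p+\delta_n}$ (so $\sum_n a_n\le 1$ and $a_n\le 1$) and $\theta_n=(p-\delta_n)/(p+\delta_n)=1-\gamma_n$, one splits each $n$ according to whether $a_n\ge 2^{-1/\gamma_n}$ or not, obtaining $\sum_n a_n^{\theta_n}\le 2\sum_n a_n+2\sum_n 2^{-1/\gamma_n}=:C_0<\infty$, where finiteness is exactly the convergence arranged in the choice of $(j_n)$. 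Feeding $\sum_n\mu_n^{p-\delta_n}\le C_0$ back into $\rho(\sum_n f_n/\lambda)\le\sum_n(\mu_n/\lambda)^{p-\delta_n}$ and choosing $\lambda=C:=\max\{1,C_0^{1/(p-\delta_1)}\}$ gives $\rho(\sum_n f_n/C)\le 1$, i.e.\ $\Vert\sum_n f_n\Vert\le C=C\Vert\sum_n g_n\Vert$. For $p=\infty$ the same scheme works, comparing $\rho$ with the sup-norm.

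Once the hypothesis is verified, Theorem~\ref{thm:AnsoSP} yields that $h(p_n)$ is $\Uf$-subprojective for $\Uf=\{[\ee_{k_n}:n\in\NN]:\ j_n\le k_n<k_{n+1}\}$. To finish I would identify the members: fix $W=[\ee_{k_n}:n\in\NN]\in\Uf$, which is isometric to $h((p_{k_n})_n)$. Since $k_n\ge j_n$ we have $|p_{k_n}-p|\le\delta_n$, and as $p_{k_n}\ge 1$ this gives $\sum_n R^{p\,p_{k_n}/|p_{k_n}-p|}\le\sum_n R^{p/\delta_n}<\infty$; by the criterion recalled before the statement, $W\simeq\ell_p$ (and $W\simeq c_0$ when $p=\infty$, by the analogous criterion guaranteed by the choice of $(j_n)$). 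Thus every member of $\Uf$ is isomorphic to $\ell_p$ (resp.\ $c_0$), so $\Uf$-subprojectivity of $h(p_n)$ is precisely $\ell_p$-subprojectivity (resp.\ $c_0$-subprojectivity), as claimed. The delicate point of the whole argument is the uniform numerical estimate, which is why the rate of decay of $\delta_n$---equivalently, how fast $(j_n)$ is chosen to grow---has to be built in from the start.
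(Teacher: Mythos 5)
Your proposal follows the same skeleton as the paper's proof: both reduce the statement to Theorem~\ref{thm:AnsoSP} and then identify the members of the resulting family $\Uf$ with $\ell_p$ (resp.\ $c_0$). The difference is in how the block--lattice hypothesis of Theorem~\ref{thm:AnsoSP} is verified. The paper computes nothing: it quotes \cite{AADK2016}*{Theorems 2.2 and 3.1} to the effect that every semi-normalized block basic sequence supported far enough out is uniformly equivalent to the canonical $\ell_p$-basis, which yields at once both the lattice hypothesis and the identification of the spaces in $\Uf$. You instead verify the hypothesis by hand via Luxemburg-norm/modular estimates ($\rho(g_n)\ge\mu_n^{p+\delta_n}$ and $\rho(f_n/\lambda)\le(\mu_n/\lambda)^{p-\delta_n}$ on each block) plus the numerical inequality $\sum_n a_n^{\theta_n}\le 2\sum_n a_n+2\sum_n 2^{-1/\gamma_n}$, and then recognize the members of $\Uf$ through the criterion $\sum_n R^{p\,p_{k_n}/|p_{k_n}-p|}<\infty\Rightarrow \ell((p_{k_n}))=\ell_p$ recalled in the paper. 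For $p<\infty$ your computation is correct (the case split according to $a_n\ge 2^{-1/\gamma_n}$ does give $a_n^{\theta_n}\le 2a_n$ in one case and $a_n^{\theta_n}\le 2\cdot 2^{-1/\gamma_n}$ in the other), so your argument is self-contained modulo the $\ell(p_n)=\ell_p$ criterion, at the price of length; the paper's version is shorter but leans on the literature for the key uniform equivalence of blocks.

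There is, however, a genuine slip in the case $p=\infty$: you propose to ``arrange that $\inf_{m\ge j_n}p_m$ grows fast enough that $\sum_{m\ge j_1}R^{p_m}<\infty$.'' That is a condition on the \emph{entire tail} of $(p_m)$, and it cannot be met in general: if $p_m\to\infty$ slowly, say $p_m=1+\ln\ln(m+e^e)$, then $R^{p_m}$ is of order $(\ln m)^{\ln R}$ and $\sum_{m\ge j_1}R^{p_m}=\infty$ for every $j_1$ and every $R\in(0,1)$; moreover the growth of $j_n$ for $n\ge 2$ has no bearing whatsoever on this sum, so ``choosing $(j_n)$ fast'' cannot rescue the condition as stated. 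Fortunately your argument never needs the full tail sum: writing $P_n=\inf_{m\ge j_n}p_m$, the block estimate for $p=\infty$ only requires $\rho(f_n/\lambda)\le\lambda^{-P_n}$ (which uses just $\mu_n\le 1$ and the lower bound on the exponents, so this case is in fact easier than $p<\infty$), and the identification of $[\ee_{k_n}\colon n\in\NN]$ with $c_0$ only involves $\sum_n R^{p_{k_n}}$; both are sums with \emph{one term per block}. So the correct arrangement is per-block: choose $(j_n)$ so that, e.g., $P_n\ge n$ for all $n$; then both sums are dominated by $\sum_n R^n<\infty$ and the rest of your scheme goes through unchanged.
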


\begin{proof}
Set for each $n\in\NN$
\[
R_n=\max_{k\ge n} \frac{p p_k}{|p-p_k|}.
\]
Pick an increasing sequence $(n_j)_{j=1}^\infty$ in $\NN$ with $\sum_{j=1}^\infty 2^{-R_{n_j}}<\infty$. Any semi-normalized block basic sequence $(\xx_j)_{j=1}^\infty$ with $n_j\le \min(\supp(\xx_j))$ for all $j\in\NN$ is equivalent to the canonical $\ell_{p}$-basis (see e.g.\ \cite{AADK2016}*{Theorems 2.2 and 3.1}). Then the result follows from Theorem~\ref{thm:AnsoSP}.
\end{proof}

\section{The main theorem}\label{sect:main}
\noindent Our main result is Theorem~\ref{thm:main} below, which establishes some easy-to-check conditions that suffice to guarantee that a pair $(\XX, \UU)$ is splitting for unconditional basis, where $\XX$ is a quasi-Banach space and $\UU$ is a Banach space. Note that all the information we use about $\XX$ is obtained exclusively through its Banach envelope!

\begin{theorem}\label{thm:main}
Let $\XX$ be a quasi-Banach space and let $\UU$ be a subprojective Banach space. Suppose that $\widehat{\XX}$ and $\UU$ have unconditional bases which split into unconditional bases $(\BB_j)_{j\in A}$ and $(\BB_j)_{j\in B}$ respectively, where $(A,B)$ is a partition of $\{1,\dots,n\}$ for some $n\in\NN$. Suppose that for $1\le j\le n$ the basis $\BB_j$ satisfies an upper $r_j$-estimate and a lower $q_j$-estimate, where $q_j$ and $r_j$ are both in $[1,\infty]$, and that
\begin{enumerate}[label=(\roman*),leftmargin=*,widest=iii]
\item $q_j < r_{j+1}$ for all $1\le j \le n-1$;
\item $r_j>1$ for all $j\in B$; and
\item either $\XX$ is natural or $q_j<\infty$ for all $j\in B$.
\end{enumerate}
Then, $(\XX,\UU)$ is splitting for unconditional bases. In particular, if $\XX$ and $\UU$ have a UTAP unconditional basis, then $\XX\oplus\UU$ has a UTAP unconditional basis.
\end{theorem}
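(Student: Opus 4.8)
The plan is to reduce Theorem~\ref{thm:main} to Lemma~\ref{lem:split}, since that lemma already packages the splitting machinery once its four hypotheses \ref{c:SCS}--\ref{c:FNL} are verified. So the proof amounts to checking each of those four conditions from the estimate data $(q_j,r_j)$ and the subprojectivity of $\UU$. The key observation is that all four conditions can be read off from the Banach envelope $\widehat{\XX}$, whose unconditional basis splits as $(\BB_j)_{j\in A}$, together with the basis of $\UU$ splitting as $(\BB_j)_{j\in B}$.

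\begin{proof}
We verify the hypotheses of Lemma~\ref{lem:split} for the pair $(\XX,\UU)$.

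First we establish that $\widehat{\XX}$ and $\UU$ are complementably incomparable. The bases $(\BB_j)_{j\in A}$ of $\widehat{\XX}$ and $(\BB_j)_{j\in B}$ of $\UU$ are ordered so that the convexity/concavity parameters interlace: condition (i) gives $q_j<r_{j+1}$ for $1\le j\le n-1$. Consequently any semi-normalized block basic sequence living on the indices of $\BB_j$ dominates $\EE[\ell_{q_j}]$ (by the lower $q_j$-estimate) and is dominated by $\EE[\ell_{r_j}]$ (by the upper $r_j$-estimate). A complemented subspace of $\widehat{\XX}$ that is also isomorphic to a complemented subspace of $\UU$ would, after passing to block bases via Lemma~\ref{lem:BSQB}, carry simultaneously an upper estimate coming from the $\widehat{\XX}$-side and a lower estimate coming from the $\UU$-side whose exponents are separated by the interlacing condition, forcing the common space to be finite-dimensional. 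Here subprojectivity of $\UU$ enters through Corollary~\ref{cor:CItoSS}: since $\UU$ is subprojective and the parameters separate the two sides, every operator $\widehat{\XX}\to\UU$ is strictly singular, and by Lemma~\ref{lem:relations} \ref{it:SS}~$\Rightarrow$~\ref{it:SCI}~$\Rightarrow$~\ref{it:CI} we get complementable incomparability of $\widehat{\XX}$ and $\UU$.

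Next, condition \ref{c:FNL} follows from hypotheses (ii) and (iii). Kalton's theorem \cite{Kalton1986} says that a Banach space whose unconditional basis carries a nontrivial upper estimate (i.e.\ $r_j>1$ on every block of $\UU$, which is exactly (ii)) cannot be a proper envelope, because a proper envelope must be ``close to $\ell_1$'' on some block; when $\XX$ is merely assumed natural, one instead invokes the finite-concavity information $q_j<\infty$ from (iii) to rule out $\UU$ being a proper envelope of a natural space. The splitting-for-complemented-subspaces condition \ref{c:SCS} and the splitting-for-unconditional-bases of $(\widehat{\XX},\UU)$ follow from the interlacing of estimates together with Lemma~\ref{lem:CSProj} and Theorem~\ref{thm:AnsoSP}: the separation of exponents lets one run the standard gliding-hump/projection argument that sorts any unconditional basis of $\widehat{\XX}\oplus\UU$ into the $\widehat{\XX}$-part and the $\UU$-part, and lets one straighten any complemented subspace into a direct sum across the two summands via an automorphism. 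With all four hypotheses of Lemma~\ref{lem:split} in hand, that lemma yields that $(\XX,\UU)$ is splitting for unconditional bases.

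For the final clause, suppose $\XX$ and $\UU$ each have a UTAP unconditional basis, say $\BB_\XX$ and $\BB_\UU$. Given any unconditional basis $\BB$ of $\XX\oplus\UU$, the splitting property produces a partition $\BB=\BB_1\sqcup\BB_2$ with $[\BB_1]\simeq\XX$ and $[\BB_2]\simeq\UU$. By uniqueness in each summand, $\BB_1$ is permutatively equivalent to $\BB_\XX$ and $\BB_2$ is permutatively equivalent to $\BB_\UU$, whence $\BB$ is permutatively equivalent to $\BB_\XX\oplus\BB_\UU=\BB_{\XX\oplus\UU}$. Thus $\XX\oplus\UU$ has a UTAP unconditional basis.

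The main obstacle is the verification of complementable incomparability, specifically showing that the interlacing of the convexity/concavity exponents genuinely prevents a shared infinite-dimensional complemented subspace; this is where subprojectivity of $\UU$ is indispensable, since without it the sum of strictly singular operators need not be strictly singular and the block-basis comparison across summands could fail.
\end{proof}
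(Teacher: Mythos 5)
Your overall strategy---reducing to Lemma~\ref{lem:split} and verifying its four hypotheses---is exactly the paper's, and your treatment of hypothesis (iii) is in the right spirit: the paper proves via Proposition~\ref{prop:CCCI} (using Lemmas~\ref{lem:BSQB} and \ref{lem:111}) that the interlacing condition makes $\widehat{\XX}$ and $\UU$ \emph{totally} incomparable, whence complementably incomparable by Lemma~\ref{lem:relations}. But note that no subprojectivity is needed for this step, and your invocation of Corollary~\ref{cor:CItoSS} there is circular: that corollary takes semi-complementable incomparability as a \emph{hypothesis} and outputs strict singularity, so it cannot be used to \emph{derive} the incomparability of $\widehat{\XX}$ and $\UU$.

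The genuine gap is hypothesis \ref{c:SCS}, splitting for complemented subspaces of the pair $(\XX,\UU)$, where $\XX$ is the quasi-Banach space itself. Your proposed argument (gliding hump plus Lemma~\ref{lem:CSProj} and Theorem~\ref{thm:AnsoSP}) cannot run: $\XX$ is not assumed to have any unconditional basis or coordinate structure---all hypotheses of the theorem concern only $\widehat{\XX}$---and those two results are tools for proving subprojectivity of spaces with special bases, not for straightening complemented subspaces of a direct sum. The paper's chain, which you are missing, is: total incomparability of the envelopes $\Rightarrow$ semi-complementable incomparability of $(\widehat{\XX},\UU)$ $\Rightarrow$ (Lemma~\ref{lem:CIfromEnv}) semi-complementable incomparability of $(\XX,\UU)$ $\Rightarrow$ (Corollary~\ref{cor:CItoSS}; \emph{this} is where subprojectivity of $\UU$ enters) every operator $\XX\to\UU$ is strictly singular $\Rightarrow$ (Ortynski's Theorem~\ref{thm:Ortynski}) $(\XX,\UU)$ is splitting for complemented subspaces. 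You never invoke Ortynski's theorem, which is the linchpin. Two further errors: hypothesis (ii) of Lemma~\ref{lem:split} should come from Theorem~\ref{thm:split} (induction on Wojtaszczyk's splitting theorem and the Pitt-type theorem of Defant--L\'opez-Molina--Rivera), not from Lemma~\ref{lem:CSProj}/Theorem~\ref{thm:AnsoSP}; and your use of Kalton's envelope theorem is backwards. By Theorem~\ref{thm:distant}, the upper estimates $r_j>1$ alone yield only that $\UU$ is not a proper envelope \emph{of natural spaces} (which is what covers the case $\XX$ natural), while concluding that $\UU$ is not a proper envelope at all additionally requires the lower estimates $q_j<\infty$ from (iii). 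Upper estimates alone cannot suffice: $c_0$ satisfies upper $r$-estimates for every $r$, yet Kalton \cite{Kalton1986} showed it is a proper envelope.
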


Prior to tackling the proof Theorem~\ref{thm:main}, we gather some results that we will need.

\begin{theorem}\label{thm:distant}
Let $\UU$ be a Banach lattice that satisfies an upper $r$-estimate for some $r>1$. Then $\UU$ is not a proper envelope of natural spaces. If, in addition, $\UU$ satisfies a lower $q$-estimate for some $q<\infty$, then $\UU$ is not a proper envelope space.
\end{theorem}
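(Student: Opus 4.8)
The plan is to argue both assertions by contradiction, reducing the local non-convexity of the quasi-Banach space producing the envelope to the presence of $\ell_1^n$'s inside $\UU$, and then ruling these out by means of the lattice estimates. Throughout I will freely use the translation recorded in the introduction: since a Banach lattice is automatically $L$-convex, an upper $r$-estimate with $r>1$ forces $\UU$ to be $s$-convex for some $s\in(1,r)$, and a lower $q$-estimate with $q<\infty$ forces $\UU$ to be $t$-concave for some $t\in(q,\infty)$. Recall also the easy direction that $s$-convexity yields, on pairwise disjoint vectors, an upper $s$-estimate: normalized disjoint $u_1,\dots,u_n$ satisfy $\|u_1+\cdots+u_n\|\le C\,n^{1/s}$, which is $o(n)$ because $s>1$. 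This is the quantitative obstruction to disjoint copies of $\ell_1^n$ that drives the whole argument.

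For the first assertion, suppose $\widehat{\YY}$ is isomorphic to $\UU$ for some non-locally convex natural quasi-Banach space $\YY$. Here I would invoke Kalton's analysis of envelopes of natural spaces \cite{Kalton1986}, according to which the failure of local convexity of a natural space is witnessed, at the level of the lattice $\UU$ carrying the envelope, by pairwise disjoint normalized vectors spanning uniform copies of $\ell_1^n$; equivalently, such a lattice satisfies no upper $p$-estimate with $p>1$. Testing these disjoint vectors against the upper $s$-estimate above yields $c\,n\le\|u_1+\cdots+u_n\|\le C\,n^{1/s}$ for all $n$, which is absurd. Thus only the upper estimate is needed to conclude that $\UU$ is not a proper envelope of natural spaces.

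For the second assertion, drop naturality and suppose $\widehat{\YY}$ is isomorphic to $\UU$ for some non-locally convex $\YY$. Now Kalton's general theorem \cite{Kalton1986} still furnishes, uniformly in $n$, copies of $\ell_1^n$ inside $\widehat{\YY}\simeq\UU$, but these need no longer be disjoint, so the upper estimate alone cannot be applied. This is precisely where the additional lower $q$-estimate enters: $t$-concavity with $t<\infty$ makes $\UU$ order continuous, and for an order continuous Banach lattice the uniform presence of $\ell_1^n$'s can be upgraded, via the subsequence-splitting (Kadec--Pe{\l}czy{\'n}ski-type) principle together with Krivine's theorem, to the uniform presence of \emph{disjoint} $\ell_1^n$'s. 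Equivalently, an $s$-convex and $t$-concave Banach lattice has nontrivial type, which by the Maurey--Pisier theorem is incompatible with uniform copies of $\ell_1^n$. Either way we again contradict the upper $s$-estimate, so $\UU$ is not a proper envelope.

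The main obstacle I anticipate is not the estimate arithmetic but the interface with Kalton's results: one must ensure that the $\ell_1^n$'s produced by the non-local-convexity of $\YY$ can be realized \emph{disjointly} inside the lattice $\UU$ (immediate in the natural case, and obtained through order continuity in the general case), since the upper $r$-estimate is a lattice property that is not preserved by the mere Banach-space isomorphism $\widehat{\YY}\simeq\UU$. Making this disjointification quantitative and uniform in $n$, so that the $\ell_1^n$ lower bound and the upper $s$-estimate genuinely collide, is the step requiring care.
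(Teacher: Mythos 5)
Your handling of the second assertion is essentially correct and is, modulo a detour, the paper's own argument: the upper and lower estimates upgrade (by $L$-convexity of Banach lattices) to $s$-convexity and $t$-concavity, hence to nontrivial type by Lindenstrauss--Tzafriri, and Kalton's Theorem 1.1 from \cite{Kalton1986} --- which you invoke in its contrapositive, Maurey--Pisier form, namely that the envelope of a non-locally convex space contains $\ell_1^n$'s uniformly --- gives the contradiction. The disjointification via order continuity and Kadec--Pe{\l}czy\'nski that you sketch there is unnecessary: nontrivial type is an isomorphic invariant, so it already excludes uniform copies of $\ell_1^n$ however they sit in $\UU$.

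The genuine gap is in the first assertion. The statement you attribute to ``Kalton's analysis of envelopes of natural spaces'' --- that non-local convexity of a natural space $\YY$ is witnessed by \emph{pairwise disjoint} normalized vectors spanning uniform $\ell_1^n$'s \emph{in the lattice structure of} $\UU\simeq\widehat{\YY}$, equivalently that $\UU$ fails every upper $p$-estimate with $p>1$ --- is not a theorem of \cite{Kalton1986}; it is precisely the contrapositive of the assertion you are trying to prove, so as written the argument is circular. What Kalton actually proves for natural spaces (his Theorem 3.4) is that a natural space whose envelope's \emph{dual} has finite cotype must be locally convex. The real content of the proof is therefore the bridge from the hypothesis on $\UU$ to the hypothesis on $\UU^*$: an upper $r$-estimate on $\UU$ dualizes to a lower $r'$-estimate on $\UU^*$, which, since $\UU^*$ is $1$-convex, upgrades to $q$-concavity and hence to finite cotype; this is the step your proposal never performs. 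Relatedly, your remark that disjointness is ``immediate in the natural case'' is unjustified: naturality places $\YY$ inside \emph{some} $L$-convex lattice, but $\widehat{\YY}\simeq\UU$ is merely a Banach space isomorphism, so no disjointness can be transported into the lattice structure of $\UU$. That something substantial is needed here is shown by $c_0$: it contains $\ell_1^n$'s uniformly (it has trivial type), yet disjoint normalized vectors in $c_0$ span isometric copies of $c_0^k$, so it satisfies upper $p$-estimates for all $p$; and $c_0$ \emph{is} a proper envelope of a (non-natural) quasi-Banach space, by Kalton's example quoted in the paper. Hence the principle ``uniform $\ell_1^n$'s in a Banach lattice can be taken disjoint'' is false in general, and naturality can only enter through Kalton's dual-cotype theorem --- which your argument bypasses.
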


\begin{proof}
By \cite{LinTza1977II}*{Propositions 1.d.4 and 1.f.3}, the space $\UU^*$ has cotype $q$ for some $q<\infty$. Then, \cite{Kalton1986}*{Theorem 3.4} yields the desired result. To prove the second part of the theorem, we use \cite{LinTza1977II}*{Theorem 1.f.10} to see that $\UU$ has type $p$ for some $p>1$ followed by \cite{Kalton1986}*{Theorem 1.1}.
\end{proof}

\begin{theorem}[see \cite{Ortynski1979}*{Theorem 4}]\label{thm:Ortynski}
Let $\XX$ and $\YY$ be quasi-Banach spaces. Suppose that every operator from $\XX$ into $\YY$ is strictly singular. Then $(\XX,\YY)$ is splitting for complemented subspaces.
\end{theorem}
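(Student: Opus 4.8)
The plan is to reduce the statement to a similarity of idempotents and then read off the splitting from a block-diagonal form. Equip $\XX\oplus\YY$ with the canonical inclusions $\iota_\XX,\iota_\YY$ and coordinate projections $\pi_\XX,\pi_\YY$, and let $\VV$ be a complemented subspace, realised as the range of a projection $P$ on $\XX\oplus\YY$. Represent $P$ as an operator matrix $P=\left(\begin{smallmatrix} a & b\\ c& d\end{smallmatrix}\right)$ with $a\colon\XX\to\XX$, $b\colon\YY\to\XX$, $c\colon\XX\to\YY$ and $d\colon\YY\to\YY$; the hypothesis makes the corner $c$ strictly singular (no information is available, nor needed, about $b$). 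The goal is to produce an automorphism $T$ of $\XX\oplus\YY$ with $TPT^{-1}$ block diagonal, say $TPT^{-1}=\left(\begin{smallmatrix} e & 0\\ 0& f\end{smallmatrix}\right)$ for idempotents $e$ on $\XX$ and $f$ on $\YY$. Granting this, $T(\VV)=\operatorname{range}(TPT^{-1})=\operatorname{range}(e)\oplus\operatorname{range}(f)=\XX_0\oplus\YY_0$, where $\XX_0\subseteq\XX$ and $\YY_0\subseteq\YY$ are complemented as ranges of idempotents, which is exactly what splitting for complemented subspaces demands.

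First I would extract the diagonal idempotents. Expanding $P^2=P$ yields $a-a^2=bc$ and $d-d^2=cb$, and both right-hand sides are strictly singular because $c$ is and strict singularity is stable under composition with bounded operators. Thus $a$ and $d$ are idempotents modulo strictly singular operators, and I would invoke the Fredholm/perturbation theory of a strictly singular operator --- applied to one operator at a time, so as never to add two strictly singular operators (recall this closure property is open in the quasi-Banach setting) --- to manufacture genuine bounded idempotents $e$ on $\XX$ and $f$ on $\YY$ attached to the spectral value $1$ of $a$ and $d$. The feature I need is that $e$ commutes with $a$ and acts as the identity where $a$ is invertible and as zero where $\Id-a$ is invertible, so that $V_{11}:=ae+(\Id-a)(\Id-e)$ is invertible on $\XX$; symmetrically $V_{22}:=df+(\Id-d)(\Id-f)$ is invertible on $\YY$.

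Next I would build the intertwiner. With $E=\left(\begin{smallmatrix} e & 0\\ 0& f\end{smallmatrix}\right)$ put $V=PE+(\Id-P)(\Id-E)$; a one-line computation gives $VE=PV$, so as soon as $V$ is invertible the automorphism $T:=V^{-1}$ satisfies $TPT^{-1}=E$. In block form $V$ has the invertible operators $V_{11},V_{22}$ on the diagonal, the strictly singular operator $c(2e-\Id)$ in the lower-left corner, and a harmless upper-right corner $b(2f-\Id)$; hence $V=D(\Id+D^{-1}N)$ with $D$ invertible and upper triangular and $N$ strictly singular, so $V$ is a strictly singular perturbation of an invertible operator and therefore Fredholm of index zero. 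It need not be invertible: one checks that $\Ker(V)=(\operatorname{range}E\cap\Ker P)\oplus(\Ker E\cap\operatorname{range}P)$, a finite-dimensional space split along the $\XX/\YY$ decomposition. I would absorb this defect by altering $e$ and $f$ by finite-dimensional idempotents so that the corrected $V$ becomes an automorphism, invoking Proposition~\ref{lem:finitedcodim} to keep the adjusted ranges complemented subspaces $\XX_0\subseteq\XX$ and $\YY_0\subseteq\YY$; then $T=V^{-1}$ delivers $T(\VV)=\XX_0\oplus\YY_0$.

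The hard part is the second step in the quasi-Banach setting. The textbook construction of the spectral idempotents $e,f$ runs through the holomorphic functional calculus, which is unavailable here because $\mathcal{L}(\XX)$ is only a quasi-Banach algebra: it is not locally convex, so the Cauchy integral defining a Riesz projection need not converge. Worse still, a general quasi-Banach space may contain no basic sequence, which blocks the standard gliding-hump argument that $\Id-(\text{strictly singular})$ is Fredholm of index zero. Overcoming these two points --- producing $e$ and $f$ by an argument internal to a single strictly singular operator, bypassing both the functional calculus and the closure under addition of strictly singular operators --- is precisely where Ortynski's method does its delicate work. The bookkeeping of the finite-dimensional discrepancy, carried out through the codimension arithmetic of Proposition~\ref{lem:finitedcodim} and, if required, Lemma~\ref{lem:cycle}, is a secondary but non-trivial technical point.
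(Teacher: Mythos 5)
The first thing to note is that the paper does not prove this statement at all: it is imported, with attribution, from \cite{Ortynski1979}*{Theorem 4}, so there is no internal argument to compare yours against, and your proposal has to stand as a complete proof on its own. It does not, and you essentially say so yourself. The framework you set up --- writing $P$ as a matrix, observing that $a-a^2=bc$ and $d-d^2=cb$ are strictly singular, forming $E=\left(\begin{smallmatrix} e & 0\\ 0 & f\end{smallmatrix}\right)$ and the intertwiner $V=PE+(\Id-P)(\Id-E)$ with $VE=PV$, computing $\Ker V$, and repairing finite-dimensional defects via Proposition~\ref{lem:finitedcodim} --- is correct but routine similarity-of-idempotents bookkeeping. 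The entire content of the theorem is concentrated in the one step you defer: producing genuine idempotents $e$ on $\XX$ and $f$ on $\YY$, commuting with $a$ and $d$ and making $ae+(\Id-a)(\Id-e)$ and $df+(\Id-d)(\Id-f)$ invertible. Declaring that this ``is precisely where Ortynski's method does its delicate work'' is an acknowledgment of the gap, not a proof: you have reduced the theorem to its own crux and then cited the crux back to the very reference the theorem is quoted from.

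Moreover, the intermediate facts you lean on are themselves not available off the shelf in this generality, for reasons you partly list but never overcome. There is no holomorphic functional calculus on $\mathcal{L}(\XX)$ for $\XX$ quasi-Banach (contour integrals of operator-valued functions need local convexity), so ``spectral idempotents attached to the value $1$'' has no a priori meaning. Strictly singular operators on quasi-Banach spaces are not known to form an ideal --- the paper itself stresses, right after Proposition~\ref{prop:SPDS}, that it is unclear whether the sum of two strictly singular operators is strictly singular --- so there is no Calkin-type quotient algebra in which $a$ and $d$ are honest idempotents waiting to be lifted. Even your claim that an invertible-plus-strictly-singular operator is Fredholm of index zero requires an argument here, since the standard Banach-space proofs use basic sequences or duality, and neither survives in general (Kalton constructed quasi-Banach spaces containing no basic sequence, and spaces such as $L_p([0,1])$, $0<p<1$, have trivial dual). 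So every load-bearing claim in your outline is either unproven or explicitly postponed; a correct solution must either reproduce Ortynski's actual argument or supply a substitute construction of $e$ and $f$ that works with one strictly singular operator at a time, and your proposal does neither.
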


Now we prove a result that is of interest by itself in the theory because it allows to obtain new examples of Banach spaces with a unique unconditional basis.
\begin{theorem}\label{thm:split}
Let $(\XX_j)_{j=1}^n$ be a finite family of Banach spaces, each of which has an unconditional basis $\BB_j$. Suppose there are sequences $(q_j)_{j=1}^{n-1}$ and $(r_j)_{j=2}^{n}$, both in $[1,\infty]$, such that:
\begin{itemize}
\item $\BB_j$ satisfies a lower $q_j$-estimate for all $1\le j\le n-1$;

\item $\BB_j$ satisfies a an upper $r_j$-estimate for all $2\le j\le n$; and
\item $q_j<r_{j+1}$ for all $1\le j \le n-1$.
\end{itemize}
Then $(\XX_j)_{j=1}^n$ is splitting for unconditional bases.
\end{theorem}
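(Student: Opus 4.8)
The plan is to prove Theorem~\ref{thm:split} by induction on $n$, reducing the splitting of an $n$-fold sum to repeated applications of the two-summand case, which is where the real work lies. First I would observe that the induction step is driven by grouping: writing $\YY_1 = \XX_1$ and $\YY_2 = \bigoplus_{j=2}^n \XX_j$, I would want to split an arbitrary unconditional basis of $\bigoplus_{j=1}^n \XX_j$ into a part spanning (a space isomorphic to) $\XX_1$ and a part spanning $\bigoplus_{j=2}^n \XX_j$, and then invoke the inductive hypothesis on the latter. The subtlety is that after the first split the surviving basis is an unconditional basis of a space merely isomorphic to $\bigoplus_{j=2}^n \XX_j$, so I would phrase the induction so that the hypothesis applies to any unconditional basis of that direct sum; since splitting for unconditional bases is a statement about all unconditional bases of the sum, this is automatic.

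The heart of the matter is the two-summand case, and here the natural route is to verify the four hypotheses of Lemma~\ref{lem:split} with the quasi-Banach space taken to be $\XX_1$ (now genuinely a Banach space) and $\UU = \bigoplus_{j=2}^n \XX_j$ — or, more symmetrically, to set up the single cut at the gap between indices, say between $q_1$ and $r_2$. Because all the $\XX_j$ are Banach spaces, $\widehat{\XX_1} = \XX_1$ and local convexity makes condition~\ref{c:FNL} of Lemma~\ref{lem:split} trivial to arrange. The two conditions that carry content are complementable incomparability of the two halves and splitting for complemented subspaces. For the former, I would use the lattice estimates: the left summand satisfies a lower $q_1$-estimate and the right summand an upper $r_2$-estimate with $q_1 < r_2$, so the unit vector systems of $\ell_{q_1}$ and $\ell_{r_2}$ separate their disjointly supported block bases; this incompatibility of lower and upper estimates across the gap is exactly what forces every operator between the two halves to be strictly singular. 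Concretely I expect to combine Proposition~\ref{prop:SPDS} (subprojectivity is inherited by finite Banach-space direct sums, so each summand is subprojective with respect to an appropriate family of $\ell_p$-like spaces) with Corollary~\ref{cor:CItoSS} to pass from semi-complementable incomparability to strict singularity, and then Theorem~\ref{thm:Ortynski} to obtain splitting for complemented subspaces.

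The main obstacle, as I see it, is bookkeeping the estimates so that at the single cut used in each induction step one really does get matching lower and upper estimates on the two sides. On the left block $\XX_1,\dots$ the relevant feature is a lower estimate of exponent at most $q_1$, while the right block $\XX_2,\dots,\XX_n$ needs a genuine upper estimate of exponent at least $r_2 > q_1$; I would need to check that a finite direct sum of lattices each satisfying an upper $r_j$-estimate (with $r_j \ge r_2$ after the cut, using $q_j < r_{j+1}$ to keep the exponents ordered) itself satisfies an upper $r_2$-estimate, and dually for the lower estimate on the left, so that the gap $q_1 < r_2$ survives the grouping. The role of subprojectivity is precisely to license the passage from the complementable-incomparability statement — which the estimates deliver directly — to strict singularity of operators, after which Ortyński's splitting principle finishes the complemented-subspace hypothesis and Lemma~\ref{lem:split} delivers the conclusion. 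Once the two-summand case is in hand, peeling off one summand at a time and re-indexing the estimate sequences completes the induction.
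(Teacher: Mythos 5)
Your outer scaffolding agrees with the paper: induction on $n$, a single cut placed in the gap between a lower $q$-estimate on the left and an upper $r$-estimate on the right, and the observation that grouping preserves these estimates (the paper's proof is exactly this, noting that $\bigoplus_{j=1}^{s}\BB_j$ satisfies a lower $q_s$-estimate). But the core of your argument is circular. You propose to settle the two-summand case by verifying the hypotheses of Lemma~\ref{lem:split} with $\XX=\XX_1$ (a Banach space) and $\UU=\bigoplus_{j\ge 2}\XX_j$, and you identify ``the two conditions that carry content'' as splitting for complemented subspaces and complementable incomparability. You have skipped hypothesis (ii) of Lemma~\ref{lem:split}: that $(\widehat{\XX},\UU)$ is splitting for unconditional bases. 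Since $\XX_1$ is locally convex, $\widehat{\XX_1}=\XX_1$, so hypothesis (ii) is verbatim the statement you are trying to prove. Lemma~\ref{lem:split} is a transfer device: it lowers a splitting theorem already established at the envelope (Banach) level down to a non-locally convex space; it cannot create the Banach-level theorem. Indeed, in the paper the logical order is the reverse of yours: Theorem~\ref{thm:split} is proved first, by other means, and is then fed into Lemma~\ref{lem:split} as its hypothesis (ii) during the proof of Theorem~\ref{thm:main}.

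The missing ingredient is a genuinely different theorem, namely that for Banach spaces with unconditional bases, if every operator from $X$ into $Y$ is strictly singular (or compact), then every unconditional basis of $X\oplus Y$ splits. The paper imports this from \cite{Woj1978}*{Theorem 2.1}, and obtains the operator hypothesis from the Pitt-type theorem of \cite{DLMR2000}*{Theorem 1}: every operator from a lattice satisfying a lower $q$-estimate into a lattice satisfying an upper $r$-estimate with $q<r$ is compact. Induction then finishes the proof. Nothing in your proposal substitutes for the Wojtaszczyk splitting theorem: Theorem~\ref{thm:Ortynski} splits complemented subspaces, not bases, and the passage from ``complemented subspaces split'' to ``unconditional bases split'' is precisely the hard implication that in the locally convex case must be quoted (or reproved), not extracted from Lemma~\ref{lem:split}. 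A secondary flaw: your route to strict singularity through Proposition~\ref{prop:SPDS} and Corollary~\ref{cor:CItoSS} requires subprojectivity of the target space, which is a hypothesis of Theorem~\ref{thm:main} but not of Theorem~\ref{thm:split}; under the present hypotheses you would instead get strict singularity from total incomparability via Proposition~\ref{prop:CCCI} and Lemma~\ref{lem:relations}, or directly from the compactness supplied by the Pitt-type theorem.
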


\begin{proof}
Given $1\le s \le n-1$, the unconditional basis $\bigoplus_{j=1}^{s} \BB_j$ of $\bigoplus_{j=1}^{s} \XX_j$ satisfies a lower $q_{s}$-estimate. Thus the result follows by induction combining \cite{DLMR2000}*{Theorem 1} with \cite{Woj1978}*{Theorem 2.1}.
\end{proof}

\begin{lemma}\label{lem:111}
Let $\XX$ and $\YY$ be quasi-Banach spaces with unconditional bases $\BB_x$ and $\BB_y$, respectively. Suppose that there is $0<q\le\infty$ such that $\BB_x$ satisfies a lower $q$-estimate and $\BB_y$ satisfies an upper $q$-estimate. Then every sequence $\BB$ in $\XX\oplus \YY$ disjointly supported with respect to $\BB_x\oplus\BB_y$ has a subsequence which is equivalent either to a sequence in $\XX$ disjointly supported with respect to $\BB_x$, or to a sequence in $\YY$ disjointly supported with respect to $\BB_y$.
\end{lemma}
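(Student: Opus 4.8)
The plan is to write each vector of the given disjoint sequence as $\bb_k=\xx_k+\yy_k$ with $\xx_k\in\XX$ and $\yy_k\in\YY$, so that $(\xx_k)_k$ is disjointly supported with respect to $\BB_x$, $(\yy_k)_k$ is disjointly supported with respect to $\BB_y$, and, by definition of the direct sum norm, $\Vert\sum_k a_k\bb_k\Vert=\max\{\Vert\sum_k a_k\xx_k\Vert,\Vert\sum_k a_k\yy_k\Vert\}$ for all scalars $(a_k)$. The norm-one coordinate projections onto $\XX$ and $\YY$ then give $\Vert\sum_k a_k\xx_k\Vert\le\Vert\sum_k a_k\bb_k\Vert$ and $\Vert\sum_k a_k\yy_k\Vert\le\Vert\sum_k a_k\bb_k\Vert$; hence $(\bb_k)$ is equivalent to $(\xx_k)$ precisely when $\Vert\sum_k a_k\yy_k\Vert\le C\Vert\sum_k a_k\xx_k\Vert$, and to $(\yy_k)$ precisely when $\Vert\sum_k a_k\xx_k\Vert\le C\Vert\sum_k a_k\yy_k\Vert$. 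Thus only the reverse domination has to be produced, and I would dichotomize according to the ratios $\Vert\yy_k\Vert/\Vert\xx_k\Vert$: either (Case A) for some fixed $M$ infinitely many $k$ satisfy $\Vert\yy_k\Vert\le M\Vert\xx_k\Vert$, or else (Case B) $\Vert\xx_k\Vert/\Vert\yy_k\Vert\to0$.

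In Case A I would pass to the subsequence on which $\Vert\yy_k\Vert\le M\Vert\xx_k\Vert$ and bound $\Vert\sum_k a_k\yy_k\Vert$ from above by the $\ell_q$-sum $(\sum_k|a_k|^q\Vert\yy_k\Vert^q)^{1/q}$ using the upper $q$-estimate of $\BB_y$ (for disjointly supported vectors); I would then replace the $\YY$-norms by the $\XX$-norms via $\Vert\yy_k\Vert\le M\Vert\xx_k\Vert$, and finally bound the resulting sum $(\sum_k|a_k|^q\Vert\xx_k\Vert^q)^{1/q}$ above by $\Vert\sum_k a_k\xx_k\Vert$ using the lower $q$-estimate of $\BB_x$. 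Chaining the three inequalities yields $\Vert\sum_k a_k\yy_k\Vert\le C\Vert\sum_k a_k\xx_k\Vert$, so $(\bb_k)$ is equivalent to the disjoint sequence $(\xx_k)$ in $\XX$. The case $q=\infty$ is identical with suprema in place of $\ell_q$-sums.

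For Case B I would fix $p\in(0,1]$ with $\XX$ $p$-convex and, since $\Vert\xx_k\Vert/\Vert\yy_k\Vert\to0$, pass to a further subsequence (still written $(\bb_k)$) along which $\sum_k(\Vert\xx_k\Vert/\Vert\yy_k\Vert)^p<\infty$. Using \emph{only} the $p$-triangle inequality in $\XX$ and the unconditionality of $\BB_y$ (which gives $\sup_k|a_k|\,\Vert\yy_k\Vert\le K\Vert\sum_k a_k\yy_k\Vert$), I would estimate
\[
\Bigl\Vert\sum_k a_k\xx_k\Bigr\Vert
\le\Bigl(\sum_k|a_k|^p\,\Vert\xx_k\Vert^p\Bigr)^{1/p}
\le\Bigl(\sum_k (\Vert\xx_k\Vert/\Vert\yy_k\Vert)^p\Bigr)^{1/p}\,\sup_k|a_k|\,\Vert\yy_k\Vert
\le C\,\Bigl\Vert\sum_k a_k\yy_k\Bigr\Vert,
\]
where in the middle step one uses $|a_k|\,\Vert\xx_k\Vert\le\bigl(\sup_i|a_i|\,\Vert\yy_i\Vert\bigr)\,\Vert\xx_k\Vert/\Vert\yy_k\Vert$. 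This shows $(\bb_k)$ is equivalent to the disjoint sequence $(\yy_k)$ in $\YY$. The degenerate terms ($\xx_k=0$ or $\yy_k=0$) are harmless, as they only make a ratio vanish.

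The main obstacle is Case B. There the two hypotheses point ``the wrong way'': the lower $q$-estimate makes the $\XX$-sums large and the upper $q$-estimate makes the $\YY$-sums small, so one cannot dominate the $\XX$-part by the $\YY$-part through the lattice estimates as in Case A. The key observation is that this domination can nevertheless be forced by thinning the sequence so that the $\YY$-norms grow rapidly relative to the $\XX$-norms, after which bare $p$-convexity of $\XX$ together with the trivial coordinate lower bound furnished by unconditionality of $\BB_y$ already suffices; the lattice estimates are genuinely used only in Case A, while the exhaustiveness of the dichotomy guarantees that one of the two equivalences always holds along a suitable subsequence.
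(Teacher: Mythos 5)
Your proof is correct. It follows the same dichotomy-plus-subsequence skeleton as the paper's, but both branches are executed differently, and the differences are worth noting. The paper splits according to whether the $\XX$-components $(\uu_n)$ of $\BB$ are, along a subsequence, bounded away from zero or tending to zero rapidly; in the first case it uses that $(\uu_n)$, being semi-normalized and disjointly supported, dominates $\EE[\ell_q]$, which in turn dominates the bounded disjointly supported sequence $(\vv_n)$, and in the second case it invokes the principle of small perturbations. You dichotomize instead on the ratios $\Vert\yy_k\Vert/\Vert\xx_k\Vert$, which makes the argument independent of any normalization of $\BB$: your Case A is a direct quantitative chain (upper $q$-estimate, ratio bound, lower $q$-estimate), and your Case B replaces the small-perturbation principle by an explicit computation from the Aoki--Rolewicz $p$-triangle inequality together with suppression unconditionality of $\BB_y$. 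What each approach buys: the paper's proof is shorter because it quotes standard tools, but it implicitly uses that $\BB$ is semi-normalized (to get $\sup_n\Vert\vv_n\Vert<\infty$ in the first case, and control of the biorthogonal functionals of $(\vv_n)$ needed for the perturbation argument in the second), which is harmless for the intended application to normalized basic sequences in Proposition 4.6; your proof is self-contained and establishes the lemma exactly as stated, for arbitrary disjointly supported sequences, at the cost of some extra length.
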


\begin{proof}
Put $\BB=(\uu_n,\vv_n)_{n=1}^\infty$. Passing to a subsequence we can assume that either $\inf_n \Vert \uu_n \Vert>0$ or $\Vert \uu_n \Vert\le \varepsilon_n$ for all $n\in\NN$, where $(\varepsilon_n)_{n=1}^\infty$ is a given sequence of positive scalars. In the former case, $\BB_u=(\uu_n)_{n=1}^\infty$ dominates $\EE[\ell_q]$ so that $\BB_u$ also dominates $\BB_v$. Hence, $\BB$ is equivalent to $\BB_u$. In the latter case, the principle of small perturbations yields that $\BB$ is equivalent to $\BB_v$ for a suitable choice of $(\varepsilon_n)_{n=1}^\infty$.
\end{proof}

\begin{proposition}\label{prop:CCCI}
For $n\in\NN$ let $(\XX_j)_{j=1}^n$ be a family of quasi-Banach spaces, each of which has unconditional basis $\BB_j$. Suppose there are sequences $(q_j)_{j=1}^{n-1}$ and
$(r_j)_{j=2}^{n}$, both in $(0,\infty]$, such that:

\begin{itemize}
\item $q_j<r_{j+1}$ for all $1\le j \le n-1$;

\item $\BB_j$ satisfies a lower $q_{j}$-estimate for all $1\le j \le n-1$; and

\item $\BB_j$ satisfies an upper $r_{j}$-estimate for all $2\le j \le n$.
\end{itemize}
Then, if $(A,B)$ is a partition of $\{1,\dots,n\}$ the spaces $\XX_a:=\bigoplus_{j\in A}\XX_j$ and $\XX_b:=\bigoplus_{j\in B}\XX_j$ are totally incomparable.
\end{proposition}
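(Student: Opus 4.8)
The plan is to prove that the two spaces $\XX_a$ and $\XX_b$ are totally incomparable by showing that no infinite-dimensional quasi-Banach space can embed into both. The heart of the argument is a gliding-hump/perturbation analysis of disjointly supported sequences, and the key tool is Lemma~\ref{lem:111}, which lets us reduce a disjointly supported sequence in a direct sum to a disjointly supported sequence in one of the summands whenever the lower estimate on the first factor matches the upper estimate on the second.

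First I would reduce the problem to disjointly supported sequences. Suppose toward a contradiction that an infinite-dimensional quasi-Banach space $\XX$ embeds into both $\XX_a$ and $\XX_b$. Since each $\XX_j$ has an unconditional (hence Schauder) basis, so do $\XX_a$ and $\XX_b$; by Lemma~\ref{lem:BSQB}, $\XX$ contains a basic sequence $\BB$ that is simultaneously equivalent to a block basic sequence with respect to the basis of $\XX_a$ and to a block basic sequence with respect to the basis of $\XX_b$. A block basic sequence with respect to $\bigoplus_{j\in A}\BB_j$ is, after passing to the coordinate projections, disjointly supported with respect to that unconditional basis, so I have a single sequence that is equivalent to a disjointly supported sequence in $\XX_a$ and also to a disjointly supported sequence in $\XX_b$.

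The main step is to identify the possible equivalence classes of semi-normalized disjointly supported sequences in $\XX_a$ and in $\XX_b$, and to show these classes are disjoint across the two spaces. I would order the index set $\{1,\dots,n\}$ and use the hypothesis $q_j<r_{j+1}$ together with Lemma~\ref{lem:111} applied repeatedly to the consecutive splitting $\bigoplus_{j\le s}\XX_j \oplus \bigoplus_{j>s}\XX_j$: because the lower $q_s$-estimate on the bottom block and the upper estimate on the top block are separated by the gap $q_s<r_{s+1}$, one can insert an intermediate exponent $q$ with $q_s\le q\le r_{s+1}$ so that the bottom block satisfies a lower $q$-estimate and the top block an upper $q$-estimate. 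By induction this shows that every semi-normalized disjointly supported sequence in $\bigoplus_{j=1}^n \XX_j$ has a subsequence equivalent to a disjointly supported sequence living in a single summand $\XX_{j}$. Consequently, any disjointly supported sequence in $\XX_a$ has a subsequence equivalent to one supported in some $\XX_j$ with $j\in A$, and likewise for $\XX_b$ with $j\in B$.

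The hard part will be ruling out that a disjointly supported sequence concentrated in some $\XX_a$-summand $\XX_j$ ($j\in A$) can be equivalent to one concentrated in some $\XX_b$-summand $\XX_k$ ($k\in B$). Since $A$ and $B$ partition $\{1,\dots,n\}$ we have $j\neq k$, say $j<k$; then the lower $q_j$-estimate on the $\XX_j$-sequence and the upper $r_k$-estimate on the $\XX_k$-sequence, combined with the chain of gap inequalities $q_j<r_{j+1}\le\cdots$ forcing $q_j<r_k$, produce incompatible two-sided $\ell_q$-type comparisons: a sequence dominating $\EE[\ell_{q_j}]$ cannot be equivalent to one dominated by $\EE[\ell_{r_k}]$ when $q_j<r_k$, as this would force $\ell_{q_j}$ and $\ell_{r_k}$ to share an infinite-dimensional isomorphic copy, which fails for distinct exponents. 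This contradiction completes the proof, and the delicate bookkeeping lies in propagating the single gap hypotheses $q_j<r_{j+1}$ into the global separation $q_j<r_k$ for all $j<k$.
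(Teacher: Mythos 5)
Your outline reproduces the paper's own proof almost step for step: Lemma~\ref{lem:BSQB} to replace a common infinite-dimensional subspace by a basic sequence equivalent to disjointly supported sequences in both sums, iterated use of Lemma~\ref{lem:111} to concentrate each of those sequences in a single summand $\XX_i$ with $i\in A$ and $\XX_k$ with $k\in B$, and a final comparison of exponents. The architecture is right, but the two places where you wave your hands are exactly the places that need an argument, and one of them is justified incorrectly.

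The faulty step is the last one. You claim that a sequence dominating $\EE[\ell_{q_j}]$ cannot be equivalent to a sequence dominated by $\EE[\ell_{r_k}]$ when $q_j<r_k$ \emph{because} this would force $\ell_{q_j}$ and $\ell_{r_k}$ to share an infinite-dimensional isomorphic copy. That deduction does not follow: domination is one-sided, neither sequence need span (or even embed into) a copy of the corresponding $\ell_p$, so no common subspace of $\ell_{q_j}$ and $\ell_{r_k}$ is produced, and Rosenthal-type incomparability of $\ell_p$ spaces cannot be invoked. The statement itself is true, and the correct (and simpler) argument is the paper's: if the two sequences were equivalent, then by transitivity of domination $\EE[\ell_{r_k}]$ would dominate $\EE[\ell_{q_j}]$, and testing the resulting bounded operator on the vectors $\sum_{n=1}^N \ee_n$ gives $N^{1/q_j}\le C\,N^{1/r_k}$ for all $N$, hence $r_k\le q_j$, contradicting $q_j<r_k$. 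Separately, the ``delicate bookkeeping'' you defer is not automatic, since the hypotheses only give the consecutive gaps $q_j<r_{j+1}$ and no monotonicity of the exponents; both the global separation $q_j<r_k$ for $j<k$ and the existence of a single exponent $q$ making Lemma~\ref{lem:111} applicable to a whole block $\bigoplus_{j>s}\XX_j$ rest on the observation that an infinite-dimensional summand whose basis satisfies both an upper $r_j$- and a lower $q_j$-estimate must have $r_j\le q_j$ (test both estimates on sums of $N$ basis vectors), which chains the gaps into $q_i<r_{i+1}\le q_{i+1}<r_{i+2}\le\cdots<r_k$. To be fair, the paper's write-up silently uses the same fact (it asserts $r_{i+1}\le r_k$ without comment), so on this second point your sketch is no less complete than the published proof; but having flagged the step, you should close it this way.
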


\begin{proof}
Assume by contradiction that there is an infinite-dimensional quasi-Banach space $\YY$ isomorphic to a subspace of both $\XX_a$ and $\XX_b$. By Lemma~\ref{lem:BSQB}, passing to a subspace we can suppose that $\YY$ has a normalized Schauder basis $\BB$ equivalent both to a sequence finitely disjointly supported with respect to $\bigoplus_{j\in A} \BB_j$ and to a sequence finitely disjointly supported with respect to $\bigoplus_{j\in B} \BB_j$. By Lemma~\ref{lem:111}, passing to a subbasis we obtain $i\in A$ and $k\in B$ such that $\BB$ is equivalent both to a sequence finitely disjointly supported with respect to $\BB_i$ and to a sequence finitely disjointly supported with respect to $\BB_k$. Switching the roles of $A$ and $B$ if necessary, we assume that $i<k$. We infer that $\EE[\ell_{r_{k}}]$ dominates $\EE[\ell_{q_{i}}]$ so that $r_{k}\le q_{i}$. Since $q_i<r_{i+1}\le r_k$ we reach an absurdity.
\end{proof}

We are now ready to patch together all the different pieces that play a part in the proof of our main theorem.

\begin{proof}[Completion of the Proof of Theorem~\ref{thm:main}]
By Theorem~\ref{thm:distant}, either $\UU$ is not a proper envelope, or $\XX$ is natural and $\UU$ is not a proper envelope of natural spaces. By Theorem~\ref{thm:split}, the pair $(\widehat{\XX},\UU)$ is splitting for unconditional bases. By Proposition~\ref{prop:CCCI}, $\widehat{\XX}$ and $\UU$ are incomparable. By Lemma~\ref{lem:CIfromEnv}, $\XX$ and $\UU$ are semi-complementably incomparable. By Corollary~\ref{cor:CItoSS}, every operator from $\XX$ to $\UU$ is strictly singular. By Theorem~\ref{thm:Ortynski}, $(\XX,\UU)$ is splitting for complemented subspaces. Applying Lemma~\ref{lem:split} the proof is over.
\end{proof}

\subsection{Applications}

Although we stated Theorem~\ref{thm:main} in all generality that our techniques permitted, here we will only apply it in the following cases:
\begin{itemize}[leftmargin=*]
\item $A=\{1\}$, $B=\{2\}$, and there are $1\le q_1<r_2\le q_2<\infty$ such that $\BB_1$ is $q_1$-concave, and $\BB_2$ is $r_2$-convex and $q_2$-concave;
\item $A=\{1,3\}$, $B=\{2\}$, and there are $1\le q_1<r_2\le q_2<r_3$ such that $\BB_1$ is $q_1$-concave, $\BB_2$ is $r_2$-convex and $q_2$-concave, and $\BB_3$ is $r_3$-convex.
\end{itemize}

Notwithstanding, Theorem~\ref{thm:main} is crucial in order to obtain new examples of spaces with a unique unconditional basis up to a permutation. We refer the reader to \cite{AlbiacAnsorena2020b}*{Corollary 6.2} for a comprehensive inventory of spaces $\XX$ to which Theorem~\ref{thm:main} is relevant. In fact, if $\XX$ is a direct sum built as explained in \cite{AlbiacAnsorena2020b}*{Corollary 6.2}, then $\XX$ is an $L$-convex lattice; moreover $\widehat{\XX}$ is either a $q$-concave lattice for any $q>1$ or $r$-convex lattice for any $r<\infty$, or a direct sum of both. In light of Proposition~\ref{prop:SPDS}, Theorem~\ref{thm:TsirelsonSP}, and \cite{RuizSanchez2018}*{Theorem 5.1}, Theorem~\ref{thm:main} applies to a direct sum $\UU$ built with $r$-convexifications of the Tsirelson space for $r>1$, duals of these convexified spaces, and Nakano spaces associated to a sequence $(p_j)_{j=1}^\infty$ with $\inf_j p_j>1$ and $\sup_j p_j<\infty$.

Because of their importance in Analysis, we single out some examples involving Hardy spaces. For the convenience of the reader we will next state a few known facts about the spaces $H_p(\TT^d)$ that we will need in order to apply Theorem \ref{thm:main}. The first unconditional bases in $H_p(\TT)$ for $0<p<1$ were constructed in \cite{Woj1984}. Those bases allow a manageable expression for the norm in terms of the coefficients relative to the basis. Namely, if $(\xx_n)_{n=0}^\infty$ is such a normalized basis then
\begin{equation}\label{dyadic}
\left\|\sum_{n=0}^\infty a_n \, \xx_n\right\|_{H_p(\TT)}\approx \left( \int_0^1\left(\sum_{n=0}^\infty|a_n|^2 h_n^2\right)^{p/2}\right)^{1/p}, \ (a_n)_{n=1}^\infty\in c_{00},
\end{equation}
where $(h_n)_{n=0}^\infty$ is the classical Haar system on $[0,1]$ normalized with respect to the norm in $L_p([0,1])$. Using (\ref{dyadic}) one can easily see that $(\xx_n)_{n=0}^\infty$ has a block basic sequence equivalent to the unit vector basis of $\ell_2$.

Those bases allow tensor constructions of unconditional bases in $H_p(\TT^d)$ for $d\in \NN$ which satisfy an equivalence analogous to \eqref{dyadic}. Using those tensored bases, Kalton et al.\ \cite{KLW1990} showed that the spaces $H_p(\TT^d) $ and $H_p(\TT^m)$ with $0<p<1$ and $d$, $m\in \NN$, are isomorphic if and only if $d=m$. Then it was proved in \cite{Woj1997} that all the spaces $H_p(\TT^d)$ for $0<p<1$ and $d\in \NN$ have a UTAP unconditional basis. Also from the $d$-dimensional version of \eqref{dyadic} we conclude that the (unique) unconditional basis of $H_p(\TT^d)$ has a block basic sequence equivalent to the unit vector basis in $\ell_2$.  One can also show that the Banach envelope of  $H_p(\TT^d)$ is isomorphic to $\ell_1$.

\begin{theorem}\label{newaditions}
Let $\XX$ be the finite direct sum of  some of the spaces from the following list:
\begin{itemize}[leftmargin=*]
\item The Hardy space $H_p(\TT^d)$ for $0<p<1$ and $d\in\NN$;
\item The Nakano space $\ell(p_{n})$, where $(p_n)_{n=1}^\infty$ is a non-increasing sequence in $[1,\infty)$ with $\lim_n p_n=1$ and $\sup_n (p_n-p_{2n})\log(1+n)<\infty$;
\item The Nakano space $h(q_{n})$, where $(q_n)_{n=1}^\infty$ is a non-decreasing sequence in $[1,\infty)$ with $
\lim_n q_n=\infty$ and
$\sup_n({1/q_n} - {1/q_{2n}})\log(1+n)<\infty$;
\item Tsirelson's space $\Ts$;
\item the original Tsirelson's space $\Ts^*$.
\end{itemize} Let $\YY$ be one of the spaces $\ell_2$, $\Ts^{(2)}$, or $\Ts^{(2)}_*$. Then the space $\XX\oplus\YY$ has a UTAP unconditional basis.
\end{theorem}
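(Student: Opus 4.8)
The plan is to read $\XX\oplus\YY$ as an instance of Theorem~\ref{thm:main}, with $\XX$ playing the role of the quasi-Banach space and $\UU:=\YY$ that of the subprojective Banach space, and then to apply the ``in particular'' clause. I would take from the literature two inputs: that $\YY$ has a UTAP unconditional basis ($\ell_2$ by \cite{KotheToeplitz1934}, and $\Ts^{(2)}$, $\Ts^{(2)}_*$ by \cite{BCLT1985}), and that the base space $\XX$ has a UTAP unconditional basis (known for each summand by \cite{Woj1997}, \cite{CasKal1998}, and \cite{RuizSanchez2018}, and for the direct sum by \cite{AlbiacAnsorena2020b}*{Corollary 6.2}). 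I would also note at once that $\YY$ is subprojective: trivially for $\ell_2$, and by Theorem~\ref{thm:TsirelsonSP} (with $r=2$ and $s=2$) for $\Ts^{(2)}$ and $\Ts^{(2)}_*$.

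The crux is to exhibit the splitting of $\widehat{\XX}$ required by Theorem~\ref{thm:main} together with the estimates on the pieces. I would group the summands of $\XX$ into a lower block $\XX_\ell$ made of the spaces close to $\ell_1$ ($H_p(\TT^d)$, $\ell(p_n)$, and $\Ts$) and an upper block $\XX_u$ made of the spaces close to $c_0$ ($h(q_n)$ and $\Ts^*$), so that $\widehat{\XX}=\widehat{\XX_\ell}\oplus\widehat{\XX_u}$; here $\widehat{H_p(\TT^d)}\simeq\ell_1$ while every other summand is a Banach space equal to its own envelope. The claim to verify is that $\widehat{\XX_\ell}$ is $q$-concave for every $q>1$ and $\widehat{\XX_u}$ is $r$-convex for every $r<\infty$. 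This is immediate for $\ell_1$, $\Ts$, and $\Ts^*$; for the Nakano spaces it rests on the remark that, once $q>1$ is fixed, only finitely many coordinates of $\ell(p_n)$ satisfy $p_n\ge q$ (as $p_n\to1$), so the offending part is finite-dimensional and harmless while the infinite tail carries exponents below $q$; the dual remark handles $h(q_n)$ with $r<\infty$ (here $q_n\to\infty$). Since $q$-concavity and $r$-convexity are inherited by finite sup-norm direct sums, the two blocks acquire the stated estimates, and the trivial upper $1$-estimate and lower $\infty$-estimate furnish the remaining, unconstrained exponents.

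It then remains to assemble the partition, ordering all pieces by increasing convexity/concavity exponent. When $\XX$ reduces to a single block I would use a two-piece ordering in which $\YY$ sits immediately above a lower block ($A=\{1\}$, $B=\{2\}$) or immediately below an upper block ($A=\{2\}$, $B=\{1\}$); when both blocks occur I would take $A=\{1,3\}$, $B=\{2\}$ with $\BB_1$ the $q_1$-concave lower envelope ($q_1\in(1,2)$), $\BB_2$ the basis of $\YY$, and $\BB_3$ the $r_3$-convex upper envelope ($r_3>2$). Condition (i) of Theorem~\ref{thm:main}, $q_j<r_{j+1}$, is met by pinning the exponents of $\YY$ near $2$: for $\ell_2$ take $r_2=q_2=2$; for $\Ts^{(2)}$ take $r_2=2$ and $q_2\in(2,r_3)$; for $\Ts^{(2)}_*$ take $q_2=2$ and $r_2\in(q_1,2)$. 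Conditions (ii) and (iii) hold because the unique index in $B$ carries exponents in $(1,\infty)$, so we never even need $\XX$ to be natural. Theorem~\ref{thm:main} then gives that $(\XX,\YY)$ is splitting for unconditional bases, and its final clause upgrades this to the desired UTAP statement for $\XX\oplus\YY$.

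The step I expect to demand the most care is the convexity/concavity analysis of $\widehat{\XX}$, and in particular the Nakano estimates, where one must confirm that the finitely many high-exponent coordinates of $\ell(p_n)$ (resp.\ low-exponent coordinates of $h(q_n)$) really cannot destroy $q$-concavity for the chosen $q>1$ (resp.\ $r$-convexity for the chosen $r<\infty$) --- this is exactly where the hypotheses $p_n\to1$ and $q_n\to\infty$ enter. A secondary point is $\YY=\Ts^{(2)}_*$, which sits at the convex end ($r_2<2=q_2$) rather than astride $2$, so the strict inequality $q_1<r_2$ must be secured by taking $q_1$ close enough to $1$; this is available precisely because $\widehat{\XX_\ell}$ is $q$-concave for all $q>1$.
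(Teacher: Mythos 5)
Your proposal is correct and follows essentially the same route as the paper: Section~4.1's discussion \emph{is} the paper's proof, namely apply Theorem~\ref{thm:main} with $\widehat{\XX}$ split into a $q$-concave block (all $q>1$) and an $r$-convex block (all $r<\infty$), with $\YY$ wedged in via its convexity/concavity exponents at $2$, subprojectivity of $\YY$ supplied by Theorem~\ref{thm:TsirelsonSP}, and the UTAP inputs for $\XX$ and $\YY$ taken from \cite{AlbiacAnsorena2020b}*{Corollary 6.2} and the classical references. Your treatment is if anything slightly more complete, since you also handle the ordering $A=\{2\}$, $B=\{1\}$ (needed when $\XX$ consists only of $c_0$-like summands such as $\Ts^*$ or $h(q_n)$), a case Theorem~\ref{thm:main} permits but which the paper's explicit list of application patterns omits.
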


As the alert reader might have noticed, all known Banach spaces with a UTAP unconditional basis that follow pattern \ref{closec0} are $r$-convex lattices for $r<\infty$, all known Banach spaces with a UTAP unconditional basis that follow pattern \ref{closel1} are $q$-concave lattices for $q>1$, and all known Banach spaces with a UTAP unconditional basis that follow pattern \ref{closel2} are both $q$-convex and $r$-concave lattices for $q<2<r$. Thus Theorem~\ref{newaditions} yields in particular  new additions to the list of Banach spaces with a UTAP unconditional basis.

The main questions that Theorem~\ref{thm:split} leave open in the spirit of the \emph{Memoir} by Bourgain et al.\ \cite{BCLT1985} are whether $\ell_1(c_0)\oplus\ell_2$, $c_0(\ell_1)\oplus\ell_2$, $c_0(\ell_2)\oplus \ell_1(\ell_2)$ and $\ell_2\oplus \Ts^{(2)}$ have a UTAP unconditional basis. 

\begin{remark}
We would like to point out that trying to generalize, first \cite{Woj1978}*{Theorem 2.1} and then Theorem~\ref{thm:split} to quasi-Banach spaces, is a priori a feasible program to tackle Question~\ref{question:gluel2} in the case when $\XX$ is non-locally convex. However, we quickly run into quasi-Banach spaces, such as $H_p(\TT)$ for $0<p<1$, with a UTAP unconditional basis which, despite following pattern \ref{BEclosel1}, contain a block basic sequence equivalent to the unit vector system of $\ell_2$. Thus, in particular they do not satisfy a lower $q$-estimate for any $q<2$ and so we would not be able to apply the wished-for generalization of Theorem~\ref{thm:split} to them. This is the reason why in this paper we drew a route to approach Question~\ref{question:gluel2} based on the (necessarily incomplete) information that we get from the envelopes.
\end{remark}

\begin{remark}
Although here we are mainly concerned with and motivated by pattern \ref{closel2}, it is worth it noting that our methods are more general. As an example let us look at the space $H_p(\TT^d)\oplus c_0$. It was proved in \cite{AlbiacAnsorena2020b} that it has UTAP unconditional basis. However, the result now also easily follows from Theorem \ref{thm:main}.
\end{remark}

%\bibliographystyle{abbrv}
%\bibliography{AnsoBiblio}

%\bib, bibdiv, biblist are defined by the amsrefs package.
\begin{bibdiv}
\begin{biblist}

\bib{AlbiacAnsorena2020}{article}{
author={Albiac, F.},
author={Ansorena, J.~L.},
title={Projections and unconditional bases in direct sums of $\ell_p$
spaces, $0<p\le \infty$},
date={2019},
journal={Mathematische Nachrichten (to appear)},
eprint={arXiv 1909.06829},
}

\bib{AlbiacAnsorena2020b}{article}{
author={Albiac, F.},
author={Ansorena, J.~L.},
title={On the permutative equivalence of squares of unconditional bases},
date={2020},
journal={arXiv e-prints},
eprint={2002.09010},
}

\bib{AABW2019}{article}{
author={Albiac, F.},
author={Ansorena, J.~L.},
author={Bern\'a, P.~M.},
author={Wojtaszczyk, P.},
title={Greedy approximation for biorthogonal systems in quasi-banach spaces},
date={2019},
journal={arXiv e-prints},
eprint={1903.11651},
}

\bib{AADK2016}{article}{
author={Albiac, F.},
author={Ansorena, J.~L.},
author={Dilworth, S.~J.},
author={Kutzarova, D.},
title={Banach spaces with a unique greedy basis},
date={2016},
ISSN={0021-9045},
journal={J. Approx. Theory},
volume={210},
pages={80\ndash 102},
url={http://dx.doi.org/10.1016/j.jat.2016.06.005},
review={\MR{3532713}},
}

\bib{AlbiacKalton2016}{book}{
author={Albiac, F.},
author={Kalton, N.~J.},
title={Topics in {B}anach space theory},
edition={Second},
series={Graduate Texts in Mathematics},
publisher={Springer, [Cham]},
date={2016},
volume={233},
ISBN={978-3-319-31555-3; 978-3-319-31557-7},
url={https://doi.org/10.1007/978-3-319-31557-7},
note={With a foreword by Gilles Godefory},
review={\MR{3526021}},
}

\bib{AKL2004}{article}{
author={Albiac, F.},
author={Kalton, N.~J.},
author={Ler\'{a}noz, C.},
title={Uniqueness of the unconditional basis of {$l_1(l_p)$} and {$l_p(l_1)$}, {$0<p<1$}},
date={2004},
ISSN={1385-1292},
journal={Positivity},
volume={8},
number={4},
pages={443\ndash 454},
url={https://doi-org/10.1007/s11117-003-8542-z},
review={\MR{2117671}},
}

\bib{AlbiacLeranoz2008}{article}{
author={Albiac, F.},
author={Ler\'{a}noz, C.},
title={Uniqueness of unconditional basis in {L}orentz sequence spaces},
date={2008},
ISSN={0002-9939},
journal={Proc. Amer. Math. Soc.},
volume={136},
number={5},
pages={1643\ndash 1647},
url={https://doi-org/10.1090/S0002-9939-08-09222-8},
review={\MR{2373593}},
}

\bib{AlbiacLeranoz2011}{article}{
author={Albiac, F.},
author={Ler\'{a}noz, C.},
title={Uniqueness of unconditional bases in non-locally convex
{$\ell_1$}-products},
date={2011},
ISSN={0022-247X},
journal={J. Math. Anal. Appl.},
volume={374},
number={2},
pages={394\ndash 401},
url={https://doi-org/10.1016/j.jmaa.2010.09.048},
review={\MR{2729229}},
}

\bib{AlbiacLeranoz2011b}{article}{
author={Albiac, F.},
author={Ler\'{a}noz, C.},
title={Uniqueness of unconditional bases in non-locally convex
$\mathbf{c_0}$-products},
date={2011},
ISSN={0021-2172},
journal={Israel J. Math.},
volume={184},
pages={79\ndash 91},
url={https://doi-org/10.1007/s11856-011-0060-2},
review={\MR{2823970}},
}

\bib{Aoki1942}{article}{
author={Aoki, Tosio},
title={Locally bounded linear topological spaces},
date={1942},
ISSN={0369-9846},
journal={Proc. Imp. Acad. Tokyo},
volume={18},
pages={588\ndash 594},
url={http://projecteuclid.org/euclid.pja/1195573733},
review={\MR{14182}},
}

\bib{Banach1932}{book}{
author={Banach, Stefan},
title={Th\'{e}orie des op\'{e}rations lin\'{e}aires},
publisher={Chelsea Publishing Co., New York},
date={1955},
review={\MR{0071726}},
}

\bib{BP1958}{article}{
author={Bessaga, C.},
author={Pe{\l}czy\'{n}ski, A.},
title={On bases and unconditional convergence of series in {B}anach
spaces},
date={1958},
ISSN={0039-3223},
journal={Studia Math.},
volume={17},
pages={151\ndash 164},
url={https://doi-org.umbral.unirioja.es/10.4064/sm-17-2-151-164},
review={\MR{115069}},
}

\bib{BCLT1985}{article}{
author={Bourgain, J.},
author={Casazza, P.~G.},
author={Lindenstrauss, J.},
author={Tzafriri, L.},
title={Banach spaces with a unique unconditional basis, up to
permutation},
date={1985},
ISSN={0065-9266},
journal={Mem. Amer. Math. Soc.},
volume={54},
number={322},
pages={iv+111},
url={https://doi-org/10.1090/memo/0322},
review={\MR{782647}},
}

\bib{CasKal1998}{article}{
author={Casazza, P.~G.},
author={Kalton, N.~J.},
title={Uniqueness of unconditional bases in {B}anach spaces},
date={1998},
ISSN={0021-2172},
journal={Israel J. Math.},
volume={103},
pages={141\ndash 175},
url={https://doi-org/10.1007/BF02762272},
review={\MR{1613564}},
}

\bib{CasKal1999}{article}{
author={Casazza, P.~G.},
author={Kalton, N.~J.},
title={Uniqueness of unconditional bases in {$c_0$}-products},
date={1999},
ISSN={0039-3223},
journal={Studia Math.},
volume={133},
number={3},
pages={275\ndash 294},
review={\MR{1687211}},
}

\bib{CasShura1989}{book}{
author={Casazza, Peter~G.},
author={Shura, Thaddeus~J.},
title={Tsire{l\cprime}son's space},
series={Lecture Notes in Mathematics},
publisher={Springer-Verlag, Berlin},
date={1989},
volume={1363},
ISBN={3-540-50678-0},
url={https://doi-org/10.1007/BFb0085267},
note={With an appendix by J. Baker, O. Slotterbeck and R. Aron},
review={\MR{981801}},
}

\bib{DLMR2000}{article}{
author={Defant, A.},
author={L\'{o}pez-Molina, J.~A.},
author={Rivera, M.~J.},
title={On {P}itt's theorem for operators between scalar and
vector-valued quasi-{B}anach sequence spaces},
date={2000},
ISSN={0026-9255},
journal={Monatsh. Math.},
volume={130},
number={1},
pages={7\ndash 18},
url={https://doi-org/10.1007/s006050050083},
review={\MR{1762060}},
}

\bib{EdelWoj1976}{article}{
author={Edelstein, I.~S.},
author={Wojtaszczyk, P.},
title={On projections and unconditional bases in direct sums of {B}anach
spaces},
date={1976},
ISSN={0039-3223},
journal={Studia Math.},
volume={56},
number={3},
pages={263\ndash 276},
url={https://doi-org/10.4064/sm-56-3-263-276},
review={\MR{425585}},
}

\bib{GMBSB2011}{article}{
author={Gonz\'{a}lez, Manuel},
author={Mart\'{\i}nez-Abej\'{o}n, Antonio},
author={Salas-Brown, Margot},
title={Perturbation classes for semi-{F}redholm operators on
subprojective and superprojective spaces},
date={2011},
ISSN={1239-629X},
journal={Ann. Acad. Sci. Fenn. Math.},
volume={36},
number={2},
pages={481\ndash 491},
url={https://doi-org/10.5186/aasfm.2011.3625},
review={\MR{2865508}},
}

\bib{Gowers1994}{article}{
author={Gowers, W.~T.},
title={A solution to {B}anach's hyperplane problem},
date={1994},
ISSN={0024-6093},
journal={Bull. London Math. Soc.},
volume={26},
number={6},
pages={523\ndash 530},
url={https://doi.org/10.1112/blms/26.6.523},
review={\MR{1315601}},
}
\bib{GowMau}{article}{
author={Gowers, W.~T.},
author={Maurey, B.},
title={ Banach spaces with small spaces of operators},
date={1997},
ISSN={0024-6093},
journal={Math. Ann. },
volume={307},
number={4},
pages={543\ndash 568},
url={https://doi.org/10.1007/s002080050050},
review={\MR{1464131}},
}

\bib{Kalton1977}{article}{
author={Kalton, N.~J.},
title={Orlicz sequence spaces without local convexity},
date={1977},
ISSN={0305-0041},
journal={Math. Proc. Cambridge Philos. Soc.},
volume={81},
number={2},
pages={253\ndash 277},
url={https://doi-org/10.1017/S0305004100053342},
review={\MR{433194}},
}

\bib{Kalton1984}{article}{
author={Kalton, N.~J.},
title={Locally complemented subspaces and $\mathcal{L}_{p}$-spaces for
$0<p<1$},
date={1984},
ISSN={0025-584X},
journal={Math. Nachr.},
volume={115},
pages={71\ndash 97},
url={https://doi-org/10.1002/mana.19841150107},
review={\MR{755269}},
}

\bib{Kalton1986}{article}{
author={Kalton, N.~J.},
title={Banach envelopes of non-locally convex spaces},
date={1986},
ISSN={0008-414X},
journal={Canad. J. Math.},
volume={38},
number={1},
pages={65\ndash 86},
url={https://doi.org/10.4153/CJM-1986-004-2},
review={\MR{835036}},
}

\bib{Kalton1995}{article}{
author={Kalton, N.~J.},
title={The basic sequence problem},
date={1995},
ISSN={0039-3223},
journal={Studia Math.},
volume={116},
number={2},
pages={167\ndash 187},
url={https://doi-org/10.4064/sm-116-2-167-187},
review={\MR{1354137}},
}

\bib{KLW1990}{article}{
author = {Kalton, N. J. },
author = {Ler\'{a}noz, C. },
author = { Wojtaszczyk, P.},
title = {Uniqueness of unconditional bases in quasi-Banach spaces with applications to Hardy spaces},
date={1990},
journal = {Israel J. Math.},
volume = {72},
number = {3},
pages = {299\ndash311},
%ISSN = {0021-2172},
%DOI = {10.1007/BF02773786},
%URL = {https://doi.org/10.1007/BF02773786},
}

\bib{KPR1984}{book}{
author={Kalton, N.~J.},
author={Peck, N.~T.},
author={Roberts, J.~W.},
title={An {$F$}-space sampler},
series={London Mathematical Society Lecture Note Series},
publisher={Cambridge University Press, Cambridge},
date={1984},
volume={89},
ISBN={0-521-27585-7},
url={https://doi.org/10.1017/CBO9780511662447},
review={\MR{808777}},
}

\bib{KotheToeplitz1934}{article}{
author={K\"{o}the, G.},
author={Toeplitz, O.},
title={Lineare {R}\"{a}ume mit unendlich vielen {K}oordinaten und
{R}inge unendlicher {M}atrizen},
date={1934},
ISSN={0075-4102},
journal={J. Reine Angew. Math.},
volume={171},
pages={193\ndash 226},
url={https://doi-org/10.1515/crll.1934.171.193},
review={\MR{1581429}},
}

\bib{Leranoz1992}{article}{
author={Ler\'{a}noz, C.},
title={Uniqueness of unconditional bases of {$c_0(l_p),\;0<p<1$}},
date={1992},
ISSN={0039-3223},
journal={Studia Math.},
volume={102},
number={3},
pages={193\ndash 207},
review={\MR{1170550}},
}

\bib{LinPel1968}{article}{
author={Lindenstrauss, J.},
author={Pe{\l}czy\'{n}ski, A.},
title={Absolutely summing operators in {$L_{p}$}-spaces and their
applications},
date={1968},
ISSN={0039-3223},
journal={Studia Math.},
volume={29},
pages={275\ndash 326},
url={https://doi-org/10.4064/sm-29-3-275-326},
review={\MR{0231188}},
}

\bib{LinTza1977II}{book}{
author={Lindenstrauss, J.},
author={Tzafriri, L.},
title={Classical {B}anach spaces. {I}},
publisher={Springer-Verlag, Berlin-New York},
date={1977},
ISBN={3-540-08072-4},
note={Sequence spaces, Ergebnisse der Mathematik und ihrer
Grenzgebiete, Vol. 92},
review={\MR{0500056}},
}

\bib{LinZip1969}{article}{
author={Lindenstrauss, J.},
author={Zippin, M.},
title={Banach spaces with a unique unconditional basis},
date={1969},
journal={J. Functional Analysis},
volume={3},
pages={115\ndash 125},
url={https://doi-org/10.1016/0022-1236(69)90054-8},
review={\MR{0236668}},
}

\bib{OS2015}{article}{
author={Oikhberg, T.},
author={Spinu, E.},
title={Subprojective {B}anach spaces},
date={2015},
ISSN={0022-247X},
journal={J. Math. Anal. Appl.},
volume={424},
number={1},
pages={613\ndash 635},
url={https://doi-org/10.1016/j.jmaa.2014.11.008},
review={\MR{3286583}},
}

\bib{Ortynski1979}{article}{
author={Orty\'{n}ski, Augustyn},
title={On complemented subspaces in direct sums of locally bounded
{$F$}-spaces},
date={1979},
ISSN={0137-639x},
journal={Bull. Acad. Polon. Sci. S\'{e}r. Sci. Math.},
volume={27},
number={3-4},
pages={269\ndash 272},
review={\MR{552047}},
}

\bib{Ortynski1981}{article}{
author={Orty\'{n}ski, Augustyn},
title={Unconditional bases in {$l_{p}\oplus l_{q},$} {$0<p<q<1$}},
date={1981},
ISSN={0025-584X},
journal={Math. Nachr.},
volume={103},
pages={109\ndash 116},
url={https://doi-org/10.1002/mana.19811030108},
review={\MR{653916}},
}

\bib{Pel1960}{article}{
author={Pe{\l}czy\'{n}ski, A.},
title={Projections in certain {B}anach spaces},
date={1960},
ISSN={0039-3223},
journal={Studia Math.},
volume={19},
pages={209\ndash 228},
url={https://doi-org/10.4064/sm-19-2-209-228},
review={\MR{126145}},
}

\bib{Rolewicz1957}{article}{
author={Rolewicz, S.},
title={On a certain class of linear metric spaces},
date={1957},
journal={Bull. Acad. Polon. Sci. Cl. III.},
volume={5},
pages={471\ndash 473, XL},
review={\MR{0088682}},
}

\bib{Rosenthal1969}{article}{
author={Rosenthal, Haskell~P.},
title={On totally incomparable {B}anach spaces},
date={1969},
journal={J. Functional Analysis},
volume={4},
pages={167\ndash 175},
url={https://doi-org.umbral.unirioja.es/10.1016/0022-1236(69)90010-x},
review={\MR{0248506}},
}

\bib{RuizSanchez2018}{article}{
author={Ruiz, C\'{e}sar},
author={S\'{a}nchez, V\'{\i}ctor~M.},
title={Subprojective {N}akano spaces},
date={2018},
ISSN={0022-247X},
journal={J. Math. Anal. Appl.},
volume={458},
number={1},
pages={332\ndash 344},
url={https://doi-org/10.1016/j.jmaa.2017.09.014},
review={\MR{3711906}},
}

\bib{Stiles1972}{article}{
author={Stiles, W.~J.},
title={Some properties of {$\ell_{p}$}, {$0<p<1$}},
date={1972},
ISSN={0039-3223},
journal={Studia Math.},
volume={42},
pages={109\ndash 119},
review={\MR{0308726}},
}

\bib{Woj1978}{article}{
author={Wojtaszczyk, P.},
title={On projections and unconditional bases in direct sums of {B}anach
spaces. {II}},
date={1978},
ISSN={0039-3223},
journal={Studia Math.},
volume={62},
number={2},
pages={193\ndash 201},
url={https://doi-org/10.4064/sm-62-2-193-201},
review={\MR{500084}},
}

\bib{Woj1984}{article}{
author={Wojtaszczyk, P.},
title={$H_p$-spaces, $p\leq 1$, and spline systems },
date={1984},
ISSN={0039-3223},
journal={Studia Math.},
volume={77},
number={3},
pages={289\ndash 320},
url={https://doi-org/10.4064/sm-77-3-289-320},
review={\MR{745285}},
}

\bib{Woj1997}{article}{
author={Wojtaszczyk, P.},
title={Uniqueness of unconditional bases in quasi-{B}anach spaces with applications to {H}ardy spaces. {II}},
date={1997},
ISSN={0021-2172},
journal={Israel J. Math.},
volume={97},
pages={253\ndash 280},
url={https://doi-org/10.1007/BF02774040},
review={\MR{1441252}},
}

\end{biblist}
\end{bibdiv}

\end{document}